\documentclass[11pt,a4paper]{amsart}
\usepackage{hyperref}
\usepackage{graphicx} 
\usepackage{wasysym}
\usepackage[mathscr]{eucal}

\usepackage{amsmath}

\usepackage{amssymb}
\usepackage{amsthm}
\usepackage{amscd}

\usepackage{amsfonts}
\usepackage{tikz}
\usepackage{enumerate}

\usepackage[english]{babel}

\newtheorem{theorem}{Theorem}[section]

\newtheorem{proposition}[theorem]{Proposition}
\newtheorem{corollary}[theorem]{Corollary}

\theoremstyle{definition}
\newtheorem{definition}[theorem]{Definition}
\newtheorem{example}[theorem]{Example}

\newtheorem{remark}[theorem]{Remark}

\newcommand{\V}{{\scriptscriptstyle \mathscr{V}}}
\newcommand{\W}{{\scriptscriptstyle \mathscr{W}}}

\makeatletter
\newcommand{\twoheaddownarrow}{\mathrel{\mathpalette\thd@arrow{}}}
\newcommand{\thd@arrow}[2]{%
	\rotatebox[origin=c]{90}{$\m@th#1\twoheadleftarrow$}%
}

\newcommand{\twoheaduparrow}{\mathrel{\mathpalette\thd@arrowd{}}}
\newcommand{\thd@arrowd}[2]{%
	\rotatebox[origin=c]{90}{$\m@th#1\twoheadrightarrow$}%
}
\makeatother

\begin{document}

	
	\title[]{Topological preservation under aggregation of quantale-valued metric spaces}
	
	\author[A. Fructuoso-Bonet]{Alejandro Fructuoso-Bonet}
	\address[A. Fructuoso-Bonet]{Programa de Doctorado en Matem\'aticas, Instituto Universitario de Matem\'atica Pura y Aplicada, Universitat Polit\`ecnica de Val\`encia, Camino de Vera s/n, 46022 Valencia, Spain,\newline    Florida Universitaria, C/ Rei en Jaume I, nº 2, 46470 Catarroja (Valencia), Spain}
	
	\email{afrubon@posgrado.upv.es}
	\author[J. Rodr\'{\i}guez-L\'opez]{Jes\'us Rodr\'{\i}guez-L\'opez}
	\address[J. Rodr\'{\i}guez-L\'opez]{Instituto Universitario de Matem\'atica Pura y Aplicada, Universitat Polit\`ecnica de Val\`encia, Camino de Vera s/n, 46022 Valencia, Spain}
	\thanks{Last author's research
		is part of the project PID2022-139248NB-I00 funded by MICIU/AEI/10.13039/501100011033 and ERDF/EU}
	\email{jesus.rodriguezlopez@upv.es}

	\subjclass[2020]{06F07; 06B35; 18D20; 54E35.}
	
	\keywords{aggregation function; information fusion; continuous quantale; quantale-enriched category; strongly preserving function; Scott topology; extended quasi-pseudometric.}

	\begin{abstract}
		Aggregation functions provide a general mechanism for combining multiple inputs into a single output and constitute a fundamental tool in information fusion. Beyond the classical numerical setting, aggregation can also be considered at the level of mathematical structures, where the main question is to determine which properties of the original structures are preserved by the aggregation process. In this paper, we study the preservation of topology within a unified framework based on continuous quantales and quantale-enriched categories. We introduce strongly preserving functions, for which the natural topology induced by a $\mathscr V$-metric space coincides with that induced after aggregation. We decompose this property into its upper and lower versions and show that upper strong preservation is equivalent to Scott continuity at the unit of the domain quantale, whereas lower strong preservation is characterized in terms of the fibre of the unit and the Scott upper semicontinuity of the inverse multifunction. Combining these results, we obtain a complete characterization of strongly preserving functions, together with analogous results for separated, symmetric, and separated and symmetric $\mathscr V$-metric spaces. Finally, we specialize the general theory to functions from $\mathsf P_+^{\,I}$ to $\mathsf P_+$, obtaining characterizations of strongly extended quasi-pseudometric and pseudometric aggregation functions on Cartesian products and on common underlying sets, together with corresponding results for extended quasi-metric and metric aggregation functions on Cartesian products.
	\end{abstract}    
	
	
	\date{\today}
	\maketitle
	
	\section{Introduction}
	
	Aggregation functions are usually understood as mappings that combine several numerical inputs into a single numerical output \cite{GrabischAlter11,XuDa03}. They constitute one of the basic mathematical tools for information fusion \cite{DuboisPrade04,BustinceAlter26,Alcantud26}.
	Beyond this classical numerical setting, however, aggregation procedures can also be
	used to combine more complex mathematical structures. In this case, the
	problem is to determine those functions that preserve the defining
	properties of the structures under consideration. This point of view has
	been studied for se\-veral kinds of generalized distances and related
	structures, including metrics and quasi-metrics, fuzzy metrics,
	indistinguishability operators, and metric modulars
	\cite{BorsikDobos81b,Dobos98,FBRL25a,FBRL25b,MayorVale10,MayorVale19,BFVale24,BFVale26,PRLVale21,MassaVale13,MinyaVale19}.

	A classical example is provided by the aggregation of metrics on
	Cartesian products of metric spaces. Let $I$ be a nonempty index set and let
	\[F:[0,+\infty)^I\longrightarrow[0,+\infty).\]
	Given a family of metric spaces $\big\{(X_i,d_i):i\in I\big\}$, consider the map
	\[d_\Pi: \left(\prod_{i\in I}X_i\right) \times \left(\prod_{i\in I}X_i\right) \longrightarrow [0,+\infty)^I\]
	defined by
	\[d_\Pi(x,y)=\bigl(d_i(x_i,y_i)\bigr)_{i\in I}.\]
	The function $F$ is said to aggregate metrics on products if
	$F\circ d_\Pi$ is a metric on $\prod_{i\in I}X_i$ for every such
	family. These functions were introduced and studied by Borsík and Doboš
	under the name of metric preserving functions
	\cite{BorsikDobos81b,Dobos98}. Similar aggregation problems have
	subsequently been considered for pseudometrics, quasi-metrics,
	quasi-pseudometrics, fuzzy metrics, and other generalized metric
	structures \cite{BFMinyaVale23,BFVale24,FBRL25b,MayorVale10,MayorVale19,MinyaVale19,PRLVale21}.
	
	Many of these structures can be regarded as particular instances of ca\-tegories enriched over a quantale \cite{Lawv73} (see the definition in Section \ref{sec:Vmet}). For example, preordered sets are categories enriched over the two-element quantale, while extended quasi-pseudometric spaces are categories enriched over the Lawvere quantale
	\[\mathsf P_+=\bigl([0,+\infty],\leq^{\mathrm{op}},+,0\bigr).\]
	This enriched-categorical interpretation provides a common framework in
	which different aggregation problems can be studied simultaneously. Thus, rather than treating each class of aggregation functions separately, the quantale-based approach makes it possible to identify structural conditions that are common to a broad family of aggregation processes.
	
	In \cite{FBRL25a}, we introduced the notion of a preserving function between quantales. Roughly speaking, a map 
	\[F:(\mathscr V,\preceq_\V,\ast)\longrightarrow(\mathscr W,\preceq_\W,\star)\]
	is preserving if, for every $\mathscr V$-category $(X,a)$, the composite $F\circ a$ defines a $\mathscr W$-category on the same underlying set. Preserving functions were characterized as lax morphisms of quantales, and this characterization allowed several known results about the aggregation of generalized metric structures to be recovered
	within a single framework.
	
	The aggregation problem also has an important topological aspect. Indeed, the fact that an aggregated metric satisfies the axioms of a metric does not by itself guarantee that the topology it induces coincides with the product topology of the original metric spaces. Borsík and Doboš therefore considered the stronger requirement that, for every family of metric spaces $\big\{(X_i,d_i):i\in I\big\}$, the topology induced by the aggregated metric coincides with the product topo\-logy, that is,
	\[\mathscr T(F\circ d_\Pi)=\prod_{i\in I}\mathscr T(d_i).\]
	Functions satisfying this condition are called strongly
	metric preserving functions \cite{BorsikDobos81b,Dobos98}. More recently, in \cite{FBRL26a} we studied the corresponding problem
	for (quasi-)(pseudo)metric aggregation functions, while in
	\cite{FBRL27b} we considered the analogous problem for preorder
	aggregation functions, both when the original structures are defined on
	different sets and aggregated on their Cartesian product and when they
	are defined on a common underlying set. In the latter setting, the natural topology with which the topology
	of the aggregated structure must be compared is the supremum of the
	topologies induced by the original structures.
	
	The purpose of the present paper is to study this topological compatibi\-lity problem within the general framework of preserving functions between quantales. Let
	\[F:(\mathscr V,\preceq_\V,\ast)\longrightarrow(\mathscr W,\preceq_\W,\star)\]
	be a preserving function. When $\mathscr V$ and $\mathscr W$ are continuous quantales, every $\mathscr V$-metric space $(X,a)$ and every $\mathscr W$-metric space $(X,F\circ a)$ carry natural topologies, denoted respectively by $\mathscr T(a)$ and $\mathscr T(F\circ a)$ \cite{DagninoFarjudianMoggi23,CookWeiss21,Flagg97,FlaggKopp97}. It is therefore natural to ask under which conditions these two topologies coincide. Our main aim is to characterize this topological invariance purely in terms of properties of the preserving function $F$.
	
	A key point in our approach is to consider separately the two inclusions
	\[\mathscr T(F\circ a)\subseteq\mathscr T(a)\qquad\text{and}\qquad\mathscr T(a)\subseteq\mathscr T(F\circ a),\]
	which lead respectively to the notions of upper strongly preserving and lower strongly preserving functions. A preserving function is strongly preserving when both conditions hold. As will be shown, the two inclusions are determined by rather different properties: upper strong preservation is characterized by Scott continuity of $F$ at the unit $1_\V$ of the domain quantale, whereas lower strong preservation is characterized in terms of the fibre of $1_\W$ and the Scott upper semicontinuity of the inverse multifunction of $F$.
	
	The paper is organized as follows. In Section \ref{sec:Vmet}, we recall the basic notions concerning continuous lattices, the way-below relation, the Scott topology, quantales, and quantale-enriched categories. We work with continuous quantales because this class is stable under arbitrary Cartesian products and because the way-below relation has an appropriate
	finite-support behaviour on product lattices. This contrasts with Flagg's value quantales \cite{Flagg97,FlaggKopp97}, which are not, in general, closed under Cartesian products. We also recall the na\-tural topology associated with a $\mathscr V$-metric space \cite{DagninoFarjudianMoggi23} and illustrate the framework with several fundamental examples, including preordered sets and extended quasi-pseudometric spaces.
	
	Section \ref{sec:top_preserving} is devoted to the topological behaviour of preserving functions. We first introduce strongly preserving functions and their upper and lower versions. Our first main result, Theorem \ref{thm:us}, shows that a preserving function is upper strongly preserving if and only if it is Scott-continuous at $1_\V$. We then characterize lower strongly preserving functions in Theorem \ref{thm:lowerstronglypreserving}. This property holds precisely when
	\[F^{-1}(1_\W)=\{1_\V\}\]
	and the inverse multifunction
	\[F^{-1}:\mathscr W\rightrightarrows\mathscr V\]
	is Scott upper semicontinuous at $1_\W$. Combining these two characterizations, Theorem \ref{thm:strongly_preserving} provides a complete description of strongly preserving functions. Corresponding characterizations are also obtained in the separated, symmetric, and separated and symmetric cases.
	
	Finally, we apply the general theory to functions
	\[F:\mathsf P_+^{\,I}\longrightarrow\mathsf P_+.\]
	In this particular case, preserving functions correspond to extended quasi-pseudometric aggregation functions \cite[Lemma~3.7]{FBRL25a}. Our results (see Propositions
	\ref{prop:upper_preserving_equal_upper_eqpm} and
	\ref{prop:lseqpm}, and Corollary \ref{cor:strongly_ext_qpm}) yield characterizations of upper, lower, and strongly extended quasi-pseudometric and pseudometric aggregation functions, both on Cartesian products and on common underlying sets. We also obtain corresponding results for extended quasi-metric and metric
	aggregation functions on Cartesian products
	(see Proposition \ref{prop:upper_seppreserving_equal_upper_eqm},
	Corollary \ref{cor:charac_upper_eqm}, and Proposition
	\ref{prop:eqmag_implies_stronglyeqmag}). In particular, several results obtained in \cite{FBRL26a,FBRL27b} are recovered as direct consequences of the general quantale-enriched framework.

	\section{Continuous quantale-valued metric spaces}\label{sec:Vmet}
	
	Classical metric spaces take values in $[0,+\infty)$. However, a far-reaching generalization of metric spaces arises by replacing the set of nonnegative real numbers with a more general ordered algebraic structure. This line of research was strongly influenced by Lawvere’s seminal work on enriched categories \cite{Lawv73}. One of the earliest contributions in this direction is due to Trillas and Alsina (see \cite{TrillasAlsina78}). Later, Kopperman \cite{Kopp88} introduced the notion of a \emph{continuity space}, where the codomain of the metric is replaced by a \emph{value semigroup}, and proved that every topological space arises as a continuity space.
	
	Nevertheless, this correspondence is not categorical: the natural functor from the category of metric spaces to the category of topological spaces (with continuous maps as morphisms) is not full \cite{CookWeiss21}. This limitation was overcome by Flagg \cite{Flagg97}, who replaced value semigroups by \emph{value quantales}, obtaining in this setting an equivalence of categories \cite{Weiss15,CookWeiss21}.
	
	However, as we will justify later, value quantales are not stable under Cartesian products. This lack of stability becomes problematic in contexts where multi-dimensional structures are required. Since Cartesian products play a central role in our development, we adopt a different viewpoint. Instead of working with value quantales in the sense of Flagg, 
	we consider continuous quantales in the sense of \cite{DagninoFarjudianMoggi23}. This choice preserves good order-theoretic properties while ensuring stability under products, which will be essential in what follows.

	A further reason for this change of perspective concerns the role of approximation in the underlying lattice. If $\mathscr{V}$ is a value quantale, the topology associated with a $\mathscr V$-category is defined using the well-below relation, which relies on complete distributivity. By contrast, for continuous quantales, the corresponding topology is defined using the way-below relation, strongly related to the Scott topology. In product lattices, the way-below relation behaves naturally (finite-support behaviour), whereas the well-below relation becomes excessively restrictive.

	This section introduces the basic notions underlying this theory. We begin with some classical concepts from order theory. Our main references are \cite{GierzHoffKeiLawMisScott,BookQuantales}, where any undefined terminology may be found.

	\begin{definition}[\mbox{\cite[Definition I-1.1]{GierzHoffKeiLawMisScott}}]
		Let $(P,\preceq)$ be a partially ordered set. We say that $x$ is \textbf{way-below} $y,$ in symbols $x\ll y$, if for every directed subset $D$ of $P$ for which $\bigvee D$ exists, the condition $y\preceq\bigvee D$ implies that there exists $d\in D$ such that $x\preceq d$.
		
		Given $x\in P,$ we will use the following standard notation: 
		\[\twoheaduparrow x =\{y\in P:x\ll y\}\qquad \twoheaddownarrow x =\{y\in P:y\ll x\}.\]
		
	\end{definition}
	
	\begin{example}\label{ex:waybelow}\
		
		\begin{enumerate}[(1)]
			\item In the complete lattice $([0,+\infty],\leq)$, one has \[ x\ll y \quad\Longleftrightarrow\quad x<y \text{ or } x=y=0. \]
			\item Given a nonempty set $X$, the way-below relation on $(\mathcal{P}(X),\subseteq)$ is given by $A\ll B$ if and only if $A$ is a finite subset of $B$.
			\item Let $\{(L_i,\preceq_i):i\in I\}$ be a family of complete lattices. Then $\prod_{i\in I} L_i$ becomes a complete lattice under the pointwise ordering $\preceq$, that is, 
			\[x\preceq y\text{ if and only if }x_i\preceq_i y_i\text{ for all }i\in I,\]
			for all $x,y\in \prod_{i\in I} L_i.$ It is easy to check that, for  $x,y\in \prod_{i\in I} L_i,$ one has
			\[x\ll y\text{ if and only if }x_i\ll y_i\text{ and }x_i=\bot_i\text{ for all but finitely many } i,\]
			where $\bot_i$ denotes the bottom element of $L_i$.
		\end{enumerate}
	\end{example}
	
	\begin{definition}[\mbox{\cite[Definition I-1.6]{GierzHoffKeiLawMisScott}}]
		A complete lattice $(L,\preceq)$ is called \textbf{continuous} if, for every $x\in L,$ one has
		$$x=\bigvee \twoheaddownarrow x.$$
	\end{definition}
	
	We shall also use the interpolation property \cite[Theorem I-1.9]{GierzHoffKeiLawMisScott} of the way-below relation in a continuous lattice $(L,\preceq)$: whenever $u\ll v$, there exists $w\in L$ such that
	\[u\ll w\ll v.\]
	Recall also that, in a continuous lattice, the way-below relation
	satisfies
	\[u,v\ll x \quad\Longrightarrow\quad u\vee v\ll x.\]
	Hence, for every $x\in L$, the set $\twoheaddownarrow x$ is directed.
	
	\begin{definition}[\cite{DagninoFarjudianMoggi23}]
		A \textbf{continuous quantale} is a triple $(\mathscr{V},\preceq,\ast)$ such that:
		\begin{itemize}
			\item $(\mathscr{V},\preceq)$ is a complete lattice and 
			$(\mathscr{V},\preceq,\ast,\top_\V)$ is a commutative integral quantale; that is, 
			$\ast \colon \mathscr V \times \mathscr V \to \mathscr V$ is an associative and commutative binary operation with unit $1_\V=\top_\V$ the top element, and for every $u\in\mathscr V$ and every family $\{v_i:i\in I\}\subseteq \mathscr V$,
			\[u \ast \bigvee_{i\in I} v_i=\bigvee_{i\in I} (u \ast v_i);\]
			
			\item the lattice $(\mathscr{V},\preceq)$ is continuous.        
		\end{itemize}
		
	\end{definition}
	
	\begin{remark}\label{rem:right_adjoint}
		The distributivity of $\ast$ over arbitrary joins in a quantale leads to the existence of a right adjoint of the mapping $u\ast -$, for a fixed but arbitrary $u\in\mathscr{V}$. The right-adjoint $u\searrow -$ is given by
		\[\; u \searrow v 
		\;=\; \bigvee \bigl\{\, w \in \mathscr{V}:u \ast w \preceq v \,\bigr\},
		\]
		for every $v\in\mathscr{V}$ \cite{BookQuantales}.
	\end{remark}
	
	We provide some standard examples of continuous quantales.
	
	\begin{example}\label{ex:continuous_quantales}\
		
		\begin{enumerate}[(1)]
			\item If
			\[\mathbf 2=\{\bot<\top\},\]
			then $(\mathbf 2,\leq,\wedge)$
			is a commutative integral continuous quantale. 
			
			More generally, every finite distributive lattice, endowed with the tensor product given by the meet, is a commutative integral continuous quantale.
			
			\item $\mathsf P_+=\big([0,+\infty],\leq^{\text{op}},+\big)$ is a commutative integral continuous quantale. Its unit is $0$, which is also the top element with respect to the order $\leq^{\mathrm{op}}$.
			
			\item If $X$ is a set, then $\big(\mathcal P(X),\subseteq,\cap\big)$ is a commutative integral continuous quantale having $X$ as unit.
		\end{enumerate}
	\end{example}
	
	\begin{example}\label{ex:product_quantales}
		
		Let $\big\{(\mathscr{V}_i,\preceq_i,\ast_i) : i \in I\big\}$ 
		be an arbitrary family of continuous quantales. 
		Consider the Cartesian product $\prod_{i\in I}\mathscr{V}_i$ endowed with 
		the componentwise order (see Example \ref{ex:waybelow}) 
		and the componentwise operation 
		\[
		\ast : \Big(\prod_{i\in I}\mathscr{V}_i\Big)
		\times
		\Big(\prod_{i\in I}\mathscr{V}_i\Big)
		\longrightarrow
		\prod_{i\in I}\mathscr{V}_i,
		\qquad
		(u \ast v)_i := u_i \ast_i v_i,
		\]
		for all $u=(u_i)_{i\in I}$, $v=(v_i)_{i\in I}$ and $i\in I$.
		
		Then $\big(\prod_{i\in I}\mathscr{V}_i,\preceq,\ast\big)$ 
		is again a continuous quantale since the product of continuous complete lattices is again a continuous complete lattice (see \cite[Proposition I-2.1]{GierzHoffKeiLawMisScott}). 
		
		If all members of the family coincide with a fixed continuous quantale 
		$(\mathscr{V},\preceq,\ast)$, we shall use the same symbols 
		$\preceq$ and $\ast$ for the induced order and operation on 
		$\mathscr{V}^I$.
		
		In particular, we denote by $\mathsf{P}_+^{\,I}$ the continuous quantale
		\[
		\mathsf{P}_+^{\,I}
		:=
		\Big([0,+\infty]^I,\, \leq^{\mathrm{op}},\, +\Big),
		\]
		where the order is defined componentwise by the opposite of the usual order
		on $[0,+\infty]$, and the operation $+$ is given componentwise, that is,
		\[
		(u+v)_i = u_i + v_i \qquad \text{ for all }i\in I,
		\]
		with $+$ on the right-hand side denoting the usual addition of extended
		nonnegative real numbers. The unit of $\mathsf{P}_+^{\,I}$ will be denoted by $0_I.$
		
	\end{example}

	As previously observed, in \cite{Flagg97}, Flagg introduced the notion of value quantale by abstracting the order and algebraic properties of
	$\big([0,+\infty],\leq^{\mathrm{op}},+\big)$. Concretely, a value quantale is a quantale $(\mathscr V,\preceq,\ast)$ such that $(\mathscr V,\preceq)$ is completely distributive and the set
	\[ \{x \in \mathscr V : x \lhd \top_\V\}  \]
	is directed, that is, whenever $u \lhd \top_\V$ and $v \lhd \top_\V$, one has     
	\[ u \vee v \lhd \top_\V. \]
	Here, $\lhd$ denotes the well-below relation: \(x\lhd y\) if, for every subset $A\subseteq\mathscr V$ such that 
	\[y\preceq\bigvee A,\]
	there exists $a\in A$ satisfying $x\preceq a.$ Recall that Raney \cite{Raney53} proved that complete distributivity of a lattice $L$ is equivalent to the well-below relation being approximating, that is,
	\[x=\bigvee\{y\in L:y\lhd x\}\]
	for every $x\in L.$
	
	For the specific quantale $\big([0,+\infty],\leq^{\mathrm{op}},+\big)$, the well-below and way-below relations agree except at the bottom element $+\infty$. More precisely,
	\[u\lhd^{\mathrm{op}}v\quad\Longleftrightarrow\quad u>v,\]
	whereas
	\[u\ll^{\mathrm{op}}v\quad\Longleftrightarrow\quad u>v
	\text{ or }u=v=+\infty.\]

	However, a fundamental difference appears as soon as one considers
	Cartesian products.
	Although $\big([0,+\infty],\leq^{\mathrm{op}},+\big)$ is a value quantale,
	its Cartesian product $\mathsf P_+^{\,I}$
	is no longer a value quantale if $I$ contains at least two elements. 
	Indeed, the well-below relation in the product becomes strictly finer: if $a \lhd^{\mathrm{op}} b$ in $[0,+\infty]^I$, then $a$ must be equal to $+\infty$ in all but at most one coordinate.
	
	Assume that $I$ contains at least two distinct elements $i,j\in I$, and define $u,v\in[0,+\infty]^I$ by
	\[u_k=\begin{cases}
		1& k=i,\\
		+\infty& k\neq i,
	\end{cases}
	\qquad
	v_k=\begin{cases}
		1& k=j,\\
		+\infty& k\neq j.
	\end{cases}\]
	Then
	\[u\lhd^{\mathrm{op}}0_I \qquad\text{and}\qquad v\lhd^{\mathrm{op}}0_I.\]
	However, since the join with respect to the order
	$\leq^{\mathrm{op}}$ is given coordinatewise by the minimum with
	respect to the usual order, the element $u\vee v$ has two finite
	coordinates, namely
	\[(u\vee v)_i=(u\vee v)_j=1.\]
	Hence
	\[u\vee v\not\!\!\lhd^{\mathrm{op}}0_I.\]
	Therefore the condition
	\[u\lhd^{\mathrm{op}}0_I \text{ and } v\lhd^{\mathrm{op}}0_I \quad\Longrightarrow\quad u\vee v\lhd^{\mathrm{op}}0_I\]
	required in Flagg's definition fails. Thus, if $I$ contains at least two elements, $\mathsf P_+^{\,I}$ is not a value quantale.

	The source of this phenomenon lies in the divergence of the well-below relation and the way-below relation in products. In product lattices, the way-below relation behaves naturally (finite-support behaviour), whereas the well-below relation
	becomes excessively restrictive. Consequently, to develop a theory of $\mathscr{V}$-categories that is stable under Cartesian products, it is more appropriate to work with continuous quantales. The most relevant topology related to the way-below relation is the Scott topology, which we recall next.

	\begin{definition}[\mbox{\cite[Definitions II-1.3]{GierzHoffKeiLawMisScott}}]
		Let $(L,\preceq)$ be a complete lattice. The \emph{Scott topology} on $L$ is the family of all subsets $O$ of $L$ such that: \begin{itemize}
			\item if $x\succeq y$ and $y\in O$ then $x\in O;$
			\item $\bigvee D\in O$ implies $D\cap O\neq\varnothing,$ for every directed set $D.$
		\end{itemize}
	\end{definition}
	
	Notice that if $(L,\preceq)$ is a continuous lattice, then the family $\{\twoheaduparrow x:x\in L\}$ is a basis for the Scott topology \cite[Proposition II-1.10]{GierzHoffKeiLawMisScott}.

	\begin{example}\label{ex:left_order_topology}
		Consider the complete lattice $([0,+\infty],\leq^{\mathrm{op}})$. In this case, the Scott-open subsets are precisely the lower sets with respect to the usual order which are inaccessible by infima of filtered sets. It follows that the family
		\[
		\big\{[0,a): a\in (0,+\infty]\big\}\cup\big\{[0,+\infty]\big\}
		\]
		forms a basis for the Scott topology.  Hence, it coincides with the left-order topology on $[0,+\infty]$. 
	\end{example}
	
	Combining Example \ref{ex:waybelow}(3) with the previous example, the Scott topology on $\mathsf P_+^{\,I}$ coincides with the product of the left-order topologies on $[0,+\infty]$. In particular, a neighbourhood basis of $0_I$ is formed by the sets
	\[\left\{u\in[0,+\infty]^I: u_j<\varepsilon_j\text{ for every }j\in J\right\}, \]
	where $J\subseteq I$ is finite and $\varepsilon_j>0$ for every $j\in J$.

	\begin{definition}
		A function $f:(L,\preceq_L)\to (S,\preceq_S)$ between two complete lattices is said to be \emph{Scott-continuous} at $x\in L$ if it is continuous at $x$ with respect to the Scott topologies of $L$ and $S.$
	\end{definition}
	
	The theory of enriched categories originated in the foundational works of Bénabou, Eilenberg and Kelly \cite{Benavou63,Benavou65,EilenbergKelly66}, and was later systematically developed by Kelly. A decisive step in its development was the seminal paper \cite{Lawv73}, where Lawvere observed that extended quasi-pseudometric spaces can be understood as categories enriched over the continuous quantale $\mathsf P_+=\big([0,+\infty],\le^{\text{op}},+\big).$
	
	Enrichment over a quantale is a particular instance of enrichment over a symmetric monoidal closed category. Despite this apparent restriction, the quantale-enriched framework captures a broad spectrum of classical structures, including preordered sets and several kinds of generalized metric spaces \cite{Flagg97,ClementinoHofmannTholen04}.
	
	For the purposes of this work, this setting provides the appropriate level of generality. We therefore recall the notion of a category enriched over a quantale.
	
	\begin{definition}[\mbox{\cite[Section III.1.3]{BookMonoidalTopology}, cf. \cite[Definition 3.1]{FlaggKopp97}}]
		Let $(\mathscr{V},\preceq,\ast)$ be a quantale. A \emph{$\mathscr{V}$-category} is a pair $(X,a)$ where $X$ is a nonempty set and $a:X\times X\to\mathscr{V}$ is a map such that
		\begin{itemize}
			\item[(VC1)] $1_\V\preceq a(x,x)$ for all $x\in X;$
			\item[(VC2)] $a(x,z)\ast a(z,y)\preceq a(x,y)$ for all $x,y,z\in X.$
		\end{itemize}
		Moreover, a $\mathscr{V}$-category $(X,a)$ is said to be:
		\begin{itemize}
			\item \emph{separated} if given $x,y \in X$, whenever $1_\V\preceq a(x,y)$ and $1_\V\preceq a(y,x)$ then $x=y.$
			\item \emph{symmetric} if $a(x,y)=a(y,x)$ for all $x,y\in X.$
		\end{itemize}
		
		A function $f:(X,a)\to (Y,b)$ between two $\mathscr{V}$-categories is called a \emph{$\mathscr{V}$-functor} if
		$$a(x,y)\preceq b(f(x),f(y))$$
		for all $x,y\in X.$
		
		The category whose objects are $\mathscr{V}$-categories and whose morphisms are $\mathscr{V}$-functors will be denoted by $\mathscr{V}$-$\mathsf{Cat}.$
		
		If $(\mathscr{V},\preceq,\ast)$ is a continuous quantale, the category $\mathscr{V}$-$\mathsf{Cat}$ will be denoted by $\mathscr{V}$-$\mathsf{Met}$ and its objects will be called \textbf{$\mathscr{V}$-metric spaces} \cite{DagninoFarjudianMoggi23}.
		
	\end{definition}
	
	When $\mathscr{V}$ is a value quantale, $\mathscr{V}$-categories were called \emph{continuity spaces} in \cite{Flagg97} and \emph{Flagg continuity spaces} in \cite{CookWeiss21}.

	\begin{example}[\mbox{\cite[Examples III.1.3.1]{BookMonoidalTopology},\cite{HofReis13}}]\label{ex:vcat}\
		
		\begin{enumerate}[(1)]
			
			\item $\mathbf{2}$-metric spaces correspond to preordered sets. 
			Indeed, a $\mathbf{2}$-category $(X,a)$ induces the preorder 
			$x\preceq_a y \Leftrightarrow a(x,y)=\top$, and conversely a preordered set
			$(X,\preceq)$ defines a $\mathbf{2}$-category via
			\[
			a_\preceq(x,y)=
			\begin{cases}
				\top & x\preceq y,\\
				\bot & \text{otherwise},
			\end{cases}
			\]
			for all $x,y \in X.$ It is also clear that separated
			$\mathbf{2}$-categories correspond to partially ordered sets.
			
			\item For the Lawvere quantale $\mathsf{P}_+$, categories enriched over it are
			precisely extended quasi-pseudometric spaces. Indeed, the axioms read
			\[
			a(x,x)=0, \qquad a(x,y)\le a(x,z)+a(z,y),
			\]
			so $a$ is an extended quasi-pseudometric, and conversely every such
			space defines a $\mathsf{P}_+$-category. Symmetric $\mathsf P_+$-categories correspond to extended pseudometric spaces, separated $\mathsf P_+$-categories correspond to extended quasi-metric spaces, and separated and symmetric $\mathsf P_+$-categories
			correspond to extended metric spaces.
			
			Moreover, a $\mathsf{P}_+$-functor $f:(X,a)\to(Y,b)$ satisfies
			\[
			a(x,y)\ge b(f(x),f(y)),
			\]
			that is, $f$ is non-expansive.
			\item Every quantale $\mathscr{V}$ becomes a $\mathscr{V}$-category over itself by means of the right implication 
			$h : \mathscr{V} \times \mathscr{V} \to \mathscr{V}$ defined by
			\[h(u,v)\;=\;u\searrow v\;=\;\bigvee \bigl\{\, w\in\mathscr{V}:u\ast w\preceq v\,\bigr\},\]
			for all $u,v \in \mathscr{V}$ 
			(see Remark \ref{rem:right_adjoint}, \cite[Proposition~III.1.3.2]{BookMonoidalTopology}). Notice that $h(u,v)$ is precisely the canonical hom-structure of the quantale $\mathscr V$ viewed as a $\mathscr V$-category over itself.
			
		\end{enumerate}
	\end{example}

	\section{Topologically preserving functions between quantales}\label{sec:top_preserving}
	
	In \cite{FBRL25a}, aggregation processes are placed within a unified categorical framework by interpreting some classical structures (such as metric, quasi-metric, and fuzzy quasi-metric spaces) as categories enriched over a quantale. Within this setting, aggregation functions are identified with preserving maps between quantales as we next define.
	
	\begin{definition}[\cite{FBRL25a}]\label{def:catpreserving}
		A map $F:(\mathscr{V},\preceq_\V,\ast)\to(\mathscr{W},\preceq_\W,\star)$ between two quantales is said to be \textbf{$\mathsf{Cat}$-preserving} if the map $\mathsf{F}:\mathscr{V}\text{-}\mathsf{Cat}\to\mathscr{W}\text{-}\mathsf{Cat},$ which assigns to a $\mathscr{V}$-category $(X,a)$ the pair $(X,F\circ a),$ and to a $\mathscr{V}$-functor $f:(X,a)\to (Y,b)$ the map $\mathsf{F}(f):(X,F\circ a)\to (Y,F\circ b)$ given by $\mathsf{F}(f)(x)=f(x)$ for all $x\in X,$ is a functor.
		
		If the functor $\mathsf F$ restricts to a functor from separated $\mathscr V$-categories to separated $\mathscr W$-categories, then $F$ is said to be separately $\mathsf{Cat}$-preserving. Similarly, if the functor $\mathsf F$ restricts to a functor from symmetric $\mathscr V$-categories to symmetric $\mathscr W$-categories, then $F$ is said to be symmetrically $\mathsf{Cat}$-preserving.
	\end{definition}
	
	\begin{definition}[\cite{FBRL25a}]\label{def:preserving}
		A map $F:(\mathscr{V},\preceq_\V,\ast)\to(\mathscr{W},\preceq_\W,\star)$ between two quantales is said to be \textbf{preserving} if the map $\mathsf{F}:\mathrm{Obj}(\mathscr{V}\text{-}\mathsf{Cat})\to\mathrm{Obj}(\mathscr{W}\text{-}\mathsf{Cat}),$ which assigns to a $\mathscr{V}$-category $(X,a)$ the pair $(X,F\circ a)$ is well-defined, that is, if $(X,F\circ a)$ is a $\mathscr{W}$-category.
		
		We will denote by $\mathscr{P}\big((\mathscr{V},\preceq_\V,\ast),(\mathscr{W},\preceq_\W,\star)\big)$, or simply by $\mathscr{P}(\mathscr{V},\mathscr{W})$ if no confusion arises, the family of preserving functions between the quantales $(\mathscr{V},\preceq_\V,\ast)$ and $(\mathscr{W},\preceq_\W,\star).$
		
		If $(X,F\circ a)$ is a separated (resp. symmetric) $\mathscr{W}$-category whenever $(X,a)$ is a separated (resp. symmetric) $\mathscr{V}$-category then $F$ is said to be \textbf{separately preserving} (resp. \textbf{symmetrically preserving}).  The family of separately (resp. symmetrically) preserving functions will be denoted by $\mathsf{se}\mathscr{P}\big((\mathscr{V},\preceq_\V,\ast),(\mathscr{W},\preceq_\W,\star)\big)$ (resp. $\mathsf{sy}\mathscr{P}\big((\mathscr{V},\preceq_\V,\ast),(\mathscr{W},\preceq_\W,\star)\big)$).
	\end{definition}
	
	In \cite{FBRL25a}, it was shown that preserving functions coincide with lax morphisms of quantales. We next recall this concept.

	\begin{definition}[\mbox{\cite[Section II.1.10]{BookMonoidalTopology},\cite{HofReis13}}]
		A map $F:(\mathscr{V},\preceq_\V,\ast)\to(\mathscr{W},\preceq_\W,\star)$ between two quantales is said to be a \emph{lax morphism of quantales} if
		\begin{itemize}
			\item $u\preceq_\V v$ implies $F(u)\preceq_\W F(v)$ for all $u,v\in\mathscr{V};$\hfill (isotone)
			\item $1_\mathscr{W}\preceq_\W F(1_\mathscr{V});$
			\item $F(u)\star F(v)\preceq_\W F(u\ast v)$ for all $u,v\in\mathscr{V}.$
		\end{itemize}
		
	\end{definition}
	
	For suitable choices of the quantales $\mathscr V$ and $\mathscr W$, the characterization of preserving functions as lax morphisms of quantales yields characterizations of aggregation functions for several classes of structures, including quasi-pseudometrics, fuzzy quasi-pseudometrics, quasi-pseudometric modulars, and preorders \cite{FBRL25a,FBRL25b,FBRL27b,MayorVale10,MinyaVale19,BFVale24}.
	
	For example, when $\mathscr V=\mathsf P_+^{\,I}$ and $\mathscr W=\mathsf P_+,$ preserving functions coincide with extended quasi-pseudometric aggregation functions on products and on sets \cite[Lemma 3.7]{FBRL25a}, which is a broader family than the metric-preserving functions studied by Borsík and Doboš \cite{BorsikDobos81b,Dobos98}, that is, mappings $f \colon [0,+\infty)^I \to [0,+\infty)$ such that, for every family of metric spaces $\big\{(X_i,d_i):i\in I\}$, the function obtained by composing $f$ with the product distance defines a metric on $\prod_{i\in I} X_i$.

	Borsík and Doboš also introduced a topological perspective into the aggregation problem through the notion of a strongly metric-preserving function \cite{BorsikDobos81b,Dobos98}, requiring that the product topology induced by the metric topologies of the factors coincide with the topology induced by the aggregated metric on $\prod_{i\in I}X_i$.
	
	Our aim is to extend this idea to the broader context of preserving functions defined above. More precisely, we investigate to what extent a preserving map $F \colon \mathscr{V} \to \mathscr{W}$ ensures not only the preservation of the enriched categorical structure, but also the induced topologies. In this way, we develop a generalized notion of strongly preserving function which encompasses the classical strongly metric-preserving functions as a particular case and applies to other enriched settings.

	Given a continuous quantale $(\mathscr{V},\preceq,\ast),$ there is a natural topology underlying a $\mathscr{V}$-metric space which can be constructed in a similar way to the metric topology. This theory was developed by Flagg and Kopperman \cite{Flagg97,FlaggKopp97} and more recently by Cook and Weiss \cite{CookWeiss21}, when $\mathscr{V}$ is a value quantale. For continuous quantales it has been studied in \cite{DagninoFarjudianMoggi23}.
	
	\begin{definition}[\mbox{\cite{DagninoFarjudianMoggi23}, cf. \cite{Flagg97,CookWeiss21}}]
		Let $(\mathscr{V},\preceq,\ast)$ be a continuous quantale and let $(X,a)$ be a $\mathscr{V}$-metric space. The family
		$$\mathscr{T}(a)=\{G\subseteq X: \text{for every }x\in G\text{ there exists }\varepsilon\ll 1_\V\text{ such that }B_a(x,\varepsilon)\subseteq G\}$$
		where 
		$$B_a(x,\varepsilon)=\{y\in X: \varepsilon \ll a(x,y)\},$$
		is a topology on $X.$ Moreover $\{B_a(x,\varepsilon):x\in X,\varepsilon\ll 1_\V\}$ is a base for the topology $\mathscr{T}(a).$
	\end{definition}

	\begin{example}\label{ex:product}\
		\begin{enumerate}[(1)]
			\item If $(X,a)$ is a $\mathbf{2}$-metric space then the topology $\mathscr{T}(a)$ is the Alexandroff topology of the preordered set $(X,\preceq_a)$ where $\preceq_a$ is the preorder defined by $x\preceq_a y$ if and only if $a(x,y)=\top$ (see \cite[Example 4.9]{Flagg97}).
			
			\item If $(X,a)$ is a $\mathsf{P}_+$-metric space then the topology $\mathscr{T}(a)$ is the standard topology induced by the extended quasi-pseudometric space $a$ on $X$ \cite[Example 4.10]{Flagg97},\cite{KunziCh09}.
			
			\item Let $\mathsf{P}_+^{\,I}=\Big([0,+\infty]^I,\, \leq^{\mathrm{op}},\, +\Big)$ be the continuous quantale considered in Example \ref{ex:product_quantales} and let $(X,a)$ be a $\mathsf{P}_+^{\,I}$-metric space. Given $\varepsilon\in [0,+\infty]^I$ then $\varepsilon \ll^{\mathrm{op}} 0_I$ if and only if $\varepsilon_i<^{\mathrm{op}} 0$ and $\varepsilon_i=\infty$ for all but finitely many $i$ (see Example \ref{ex:waybelow}). Therefore, the set $J=\{j\in I:\varepsilon_j\neq\infty\}$ is finite. Hence, if $x\in X$ then
			\begin{align*}
				B_{a}(x,\varepsilon)&=\left\{y\in X: \varepsilon\ll^{\mathrm{op}}a(x,y)\right\}\\
				&=\left\{y\in X: (\pi_j\circ a)(x,y)<\varepsilon_j \text{ for all } j\in J\right\}\\
				&=\bigcap_{j\in J}\left\{y\in X: (\pi_j\circ a)(x,y)<\varepsilon_j \right\}\\
				&=\bigcap_{j\in J}\left\{y\in X: \varepsilon_j\ll^{\mathrm{op}} (\pi_j\circ a)(x,y) \right\}\\
				&=\bigcap_{j\in J} B_{\pi_j\circ a} (x,\varepsilon_j)
			\end{align*}
			where $\pi_j$ denotes the projection onto the $j$-th coordinate. In addition, it is clear that $(X,\pi_i\circ a)$ is a $\mathsf{P}_+$-metric space for every $i\in I,$ that is, an extended quasi-pseudometric space.   
			Hence $\mathscr{T}(a)$ is precisely the supremum topology $\bigvee \mathscr{T}(\pi_i\circ a)$ of the family of topologies $\{\mathscr{T}(\pi_i\circ a):i\in I\}$ on $X$.
			
			\item We next apply the previous characterization of the topology on a $\mathsf P_+^{\,I}$-metric space to one particular case. Let $\big\{(X_i,d_i):i\in I\big\}$ be a family of $\mathsf{P}_+$-metric spaces. Then $(\prod_{i\in I}X_i,d_\Pi)$ is a $\mathsf{P}_+^{\,I}$-metric space. Given $x\in \prod_{i\in I} X_i$ and $\varepsilon\ll^{\mathrm{op}} 0_I$, let $J$ be the finite set $\{j\in I:\varepsilon_j\neq\infty\}$. Then
			\begin{align*}
				B_{d_\Pi}(x,\varepsilon)&=\left\{y\in\prod_{i\in I} X_i: \varepsilon\ll^{\mathrm{op}}d_\Pi(x,y)\right\}\\
				&=\left\{y\in\prod_{i\in I}X_i: d_j(x_j,y_j)<\varepsilon_j \text{ for all } j\in J\right\}.
			\end{align*}
			
			Therefore, the topology $\mathscr{T}(d_\Pi)$ coincides with the product topology on $\prod_{i\in I} X_i$ that we will denote by $\prod\mathscr{T}(d_i).$
		\end{enumerate}
	\end{example}
	
	If we denote by $\mathsf{Top}$ the category of topological spaces and continuous maps, then we can define a functor
	\[ \mathsf{T}_{\mathscr V}:\mathscr{V}\text{-}\mathsf{Met}\to\mathsf{Top}\]
	sending each $\mathscr{V}$-metric space $(X,a)$ to the topological space $(X,\mathscr{T}(a))$ and acting as the identity on morphisms (see \cite{CookWeiss21}). Notice that $\mathsf{T}_{\mathscr V}$ is well defined since, for every morphism $f:(X,a)\to (Y,b)$
	between $\mathscr{V}$-metric spaces, every $x\in X,$ and every $\varepsilon\ll 1_{\mathscr V},$ one has
	\[f\big(B_a(x,\varepsilon)\big)\subseteq B_b\big(f(x),\varepsilon\big).\]
	Hence,
	\[f:\big(X,\mathscr{T}(a)\big)\to \big(Y,\mathscr{T}(b)\big)\]
	is continuous (see \cite[Theorem 1]{DagninoFarjudianMoggi23}).
	
	We now extend the notion of a strongly metric-preserving function to the setting of $\mathscr V$-metric spaces.
	
	\begin{definition}
		A function $F:(\mathscr{V},\preceq_\V,\ast)\to(\mathscr{W},\preceq_\W,\star)$ between two continuous quantales is said to be \textbf{strongly $\mathsf{Cat}$-preserving} if $F$ induces a functor 
		\[\mathsf{F}:\mathscr V\text{-}\mathsf{Met}\to \mathscr W\text{-}\mathsf{Met},\]
		that is, it is $\mathsf{Cat}$-preserving, and the following diagram commutes strictly
		\begin{center}
			
			\begin{tikzpicture}[scale=1.5]
				\node (A) at (0,0) {$\mathscr{V}\text{-}\mathsf{Met}$};
				\node (B) at (2,0) {$\mathscr{W}\text{-}\mathsf{Met}$};
				\node (C) at (2,-1) {$\mathsf{Top}$};
				\node (AA) at (0.2,0) {\phantom{$\mathsf{Unif}$}};
				\node (BB) at (1.85,0.0) {\phantom{$\mathsf{SUnif}$}};
				\node (CC) at (2,-1) {\phantom{$\mathsf{LUnif}$}};
				\path[->,font=\scriptsize] (AA) edge node[above]{$\mathsf{F}$} (BB);
				\path[->,font=\scriptsize] (B) edge node[right]{$\mathsf{T}_\W$} (C);
				\path[->,font=\scriptsize] (A) edge node[below]{$\mathsf{T}_\V$} (C);
			\end{tikzpicture}
		\end{center}
		that is,
		\[\mathsf{T}_{\mathscr W}\circ \mathsf{F} = \mathsf{T}_{\mathscr V}.\]
		
		If, for every $\mathscr V$-metric space $(X,a)$, the pair
		$(X,F\circ a)$ is a $\mathscr W$-metric space and
		\[\mathscr T(F\circ a)=\mathscr T(a),\]
		then $F$ is said to be \textbf{strongly preserving}.
		
		Furthermore, $F$ is said to be \textbf{upper strongly preserving} if, for every $\mathscr V$-metric space $(X,a),$ $(X,F\circ a)$ is a $\mathscr W$-metric space and 
		\[
		\mathscr T(F\circ a)\subseteq \mathscr T(a),
		\]
		and \textbf{lower strongly preserving} if, for every $\mathscr V$-metric
		space $(X,a)$, $(X,F\circ a)$ is a $\mathscr W$-metric space and
		\[
		\mathscr T(a)\subseteq \mathscr T(F\circ a).
		\]
		
		The previous notions admit analogous versions when considering only
		separated, symmetric, or separated and symmetric $\mathscr V$-metric
		spaces.
		
	\end{definition}
	
	It was proved in \cite[Theorems~3.13 and~3.17]{FBRL25a} that a function
	is (separately) $\mathsf{Cat}$-preserving if and only if it is
	(separately) preserving (the corresponding equivalence does not hold in
	the symmetric case \cite[Example~3.27]{FBRL25a}). Since the functors
	$\mathsf T_{\mathscr V}$ and $\mathsf T_{\mathscr W}$ act as the identity
	on the underlying maps of morphisms, we obtain the following result.
	
	\begin{proposition}
		Let $F:(\mathscr{V},\preceq_\V,\ast)\to(\mathscr{W},\preceq_\W,\star)$ be a function between two continuous quantales. The following statements are equivalent:
		\begin{enumerate}[(1)]
			\item $F$ is strongly (separately) $\mathsf{Cat}$-preserving;
			\item $F$ is strongly (separately) preserving.
		\end{enumerate}
	\end{proposition}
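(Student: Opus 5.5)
The plan is to unfold both definitions and compare them condition by condition, invoking the equivalence from \cite[Theorems~3.13 and~3.17]{FBRL25a}: a function is (separately) $\mathsf{Cat}$-preserving if and only if it is (separately) preserving. What is left is the topological clause. This amounts to a bookkeeping argument about what strict commutativity of a diagram of functors means when the functors act as the identity on underlying maps.

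For (1)$\Rightarrow$(2), assume $F$ is strongly (separately) $\mathsf{Cat}$-preserving. Then $\mathsf F$ is a functor, so for every (separated) $\mathscr V$-metric space $(X,a)$ the pair $(X,F\circ a)$ is a (separated) $\mathscr W$-metric space. Hence $F$ is (separately) preserving. Evaluating the strict equality $\mathsf T_{\mathscr W}\circ\mathsf F=\mathsf T_{\mathscr V}$ on the object $(X,a)$ gives
\[(X,\mathscr T(F\circ a))=(X,\mathscr T(a)),\]
that is, $\mathscr T(F\circ a)=\mathscr T(a)$. So $F$ is strongly (separately) preserving.

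For (2)$\Rightarrow$(1), assume $F$ is strongly (separately) preserving. In particular $F$ is (separately) preserving, so by the cited theorems it is (separately) $\mathsf{Cat}$-preserving, and $\mathsf F$ is a functor on the appropriate full subcategories. It remains to check $\mathsf T_{\mathscr W}\circ\mathsf F=\mathsf T_{\mathscr V}$ on objects and on morphisms. On objects, both composites send $(X,a)$ to $X$ with the topology $\mathscr T(F\circ a)=\mathscr T(a)$. On morphisms, a $\mathscr V$-functor $f\colon(X,a)\to(Y,b)$ is sent by $\mathsf F$ to the same underlying map, and then by $\mathsf T_{\mathscr W}$ to the same map again; likewise $\mathsf T_{\mathscr V}(f)=f$. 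Since the domain and codomain topologies coincide by the object-level equality, the two resulting morphisms of $\mathsf{Top}$ are equal.

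No step is a real obstacle. The only point requiring care is the separated case: the functors must be restricted to the full subcategories of separated $\mathscr V$- and $\mathscr W$-metric spaces, and the separated version of the cited equivalence (Theorem~3.17 of \cite{FBRL25a}) must be used. The symmetric case is deliberately excluded from the statement, because that equivalence fails there \cite[Example~3.27]{FBRL25a}.
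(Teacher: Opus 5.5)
Your proposal is correct and follows the paper's own argument. The paper also cites the earlier equivalence between (separately) $\mathsf{Cat}$-preserving and (separately) preserving functions, and notes that $\mathsf T_{\mathscr V}$ and $\mathsf T_{\mathscr W}$ act as the identity on underlying maps, so that commutativity of the diagram reduces to $\mathscr T(F\circ a)=\mathscr T(a)$ on objects; you simply write out this bookkeeping, which the paper leaves implicit.
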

	
	\subsection{Upper strongly preserving functions}

	This section is devoted to the characterization of upper strongly preserving functions.
	
	A natural first question is whether every preserving function is automatically upper strongly preserving. The following simple example shows that this is not the case.
	
	\begin{example}
		Let
		\[
		\mathscr V=\mathbb N\cup\{\infty\}
		\]
		be endowed with the usual order $\leq.$ Then $(\mathscr{V},\leq,\wedge)$ is a continuous quantale. 
		Define $F:\mathscr V\to \mathbf{2}$ by
		\[F(\infty)=\top, \qquad F(n)=\bot \quad\text{for every }n\in\mathbb N. \]
		It is easy to check that $F$ is a lax morphism of quantales. Hence $F$ is preserving by \cite[Theorem 3.13]{FBRL25a}.
		
		However, $F$ is not upper strongly preserving. To see this, define $a:\mathscr V\times \mathscr V\to\mathscr V$ by
		\[a(n,m)=\begin{cases}
			\infty&\text{if }n=m,\\
			n\wedge m&\text{if }n\neq m,
		\end{cases}\]
		for every $n,m\in\mathscr V.$ Then $(\mathscr V,a)$ is a $\mathscr V$-metric space. Furthermore,  the topology $\mathscr T(a)$ is not discrete. In fact, for every $n\in\mathbb N$,
		\[B_a(\infty,n)=\{\infty\}\cup\{m\in\mathbb N:n\leq m\}.\]
		Thus, every basic neighbourhood of $\infty$ contains infinitely many natural numbers, and hence $\{\infty\}$ is not $\mathscr T(a)$-open.
		
		On the other hand, $F\circ a$ is the $\mathbf 2$-metric given by
		\[(F\circ a)(n,m)=\begin{cases}
			\top&\text{if }n=m,\\
			\bot&\text{if }n\neq m.
		\end{cases}\]
		Since $\top\ll_\mathscr W\top$, we have
		\[B_{F\circ a}(n,\top)=\{n\}\]
		for every $n\in \mathscr V$. Therefore $\mathscr T(F\circ a)$ is the discrete topology.
		Consequently,
		\[\mathscr T(F\circ a)\nsubseteq \mathscr T(a),\]
		and so $F$ is not upper strongly preserving. In particular, $F$ is not strongly preserving.
	\end{example}
	
	Notice that the preserving function $F$ of the previous example is not Scott continuous at $1_\V=\infty.$ In fact $\twoheaduparrow F(\infty)=\twoheaduparrow \top =\{\top\}$ is a Scott neighborhood of $F(\infty).$  On the other hand, since
	$\infty\not\ll_{\mathscr V}\infty$, every basic Scott neighborhood of $\infty$ is of the form $\twoheaduparrow n$ for some $n\in\mathbb N$. Hence every Scott open neighborhood $U$ of $\infty$ contains infinitely many natural
	numbers, and therefore $F(U)=\{\bot,\top\}\nsubseteq \{\top\}.$ Thus $F$ is not Scott continuous at $1_{\mathscr V}$. We next show that this fact is not accidental.

	\begin{theorem}\label{thm:us}
		Let $F:(\mathscr{V},\preceq_\V,\ast)\to(\mathscr{W},\preceq_\W,\star)$ be a preserving function between continuous quantales. The following statements are equivalent:
		\begin{itemize}
			\item[(1)] $F$ is upper strongly preserving$;$
			\item[(2)] $F$ is Scott continuous at $1_\V.$
			
		\end{itemize}

		The previous equivalence also remains valid for symmetrically upper strongly preserving and separately upper strongly preserving functions.\medskip
		
		Moreover, in those preservation settings in which the functions under
		consideration are isotone (for instance, in the preserving, separately preserving, and symmetrically $\mathsf{Cat}$-preserving settings), the previous equivalent conditions are also
		equivalent to:
		\begin{itemize}
			\item[(3)] $F(1_\V)=\bigvee \bigl\{F(u):u\ll_\V 1_\V\bigr\}.$
		\end{itemize}
	\end{theorem}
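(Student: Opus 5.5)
The plan is to exploit that $\mathscr W$ is integral, so $1_\W=\top_\W$. Since $F\circ a$ is a $\mathscr W$-category, $1_\W\preceq_\W F(a(x,x))=F(1_\V)$, and hence $F(1_\V)=\top_\W$. The sets $\twoheaduparrow w$ with $w\ll_\W 1_\W$ therefore form a base of Scott neighbourhoods of $F(1_\V)$. So (2) can be restated as: for every $w\ll_\W 1_\W$ there is $\varepsilon\ll_\V 1_\V$ with $F(\twoheaduparrow\varepsilon)\subseteq\twoheaduparrow w$. In each direction I will use only that $F\circ a$ is a $\mathscr W$-category, never that $F$ is isotone. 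This is what makes the argument work uniformly in the symmetric setting, where preserving functions need not be lax morphisms.

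\emph{(2)$\Rightarrow$(1).} Fix a $\mathscr V$-metric space $(X,a)$, $x\in X$, $\delta\ll_\W 1_\W$ and $y\in B_{F\circ a}(x,\delta)$. I need $\varepsilon\ll_\V 1_\V$ with $B_a(y,\varepsilon)\subseteq B_{F\circ a}(x,\delta)$.
\begin{itemize}
\item Put $s=F(a(x,y))$. By interpolation choose $\delta\ll\delta'\ll s$.
\item Since $\star$ preserves joins and $1_\W=\bigvee\twoheaddownarrow 1_\W$ is a directed join, $s=\bigvee\{s\star\eta:\eta\ll_\W 1_\W\}$ is directed. Hence there is $\eta\ll_\W 1_\W$ with $\delta'\preceq s\star\eta$, and so $\delta\ll s\star\eta$.
\item Apply (2) to the neighbourhood $\twoheaduparrow\eta$ of $F(1_\V)=1_\W$. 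This gives $\varepsilon\ll_\V 1_\V$ with $F(\twoheaduparrow\varepsilon)\subseteq\twoheaduparrow\eta$.
\item For $z\in B_a(y,\varepsilon)$ we get $\eta\ll F(a(y,z))$. Axiom (VC2) for $F\circ a$ and monotonicity of $\star$ then give
\[F(a(x,z))\succeq F(a(x,y))\star F(a(y,z))\succeq s\star\eta\gg\delta.\]
Thus $z\in B_{F\circ a}(x,\delta)$.
\end{itemize}
This argument applies to any $(X,a)$, so it covers the separated and symmetric versions as well.

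\emph{(1)$\Rightarrow$(2).} I test (1) on $X=\mathscr V$ with the symmetrized canonical structure $a(u,v)=h(u,v)\wedge h(v,u)$ from Example~\ref{ex:vcat}(3).
\begin{itemize}
\item $a$ is a $\mathscr V$-category, since $(u\wedge u')\ast(v\wedge v')\preceq(u\ast v)\wedge(u'\ast v')$.
\item $a$ is symmetric. It is also separated, because $1_\V\preceq h(u,v)$ iff $u\preceq v$. So this single space serves all three settings.
\item Since $h(1_\V,v)=v$ and $h(v,1_\V)=1_\V$, we have $a(1_\V,v)=v$. Consequently $B_a(1_\V,\varepsilon)=\twoheaduparrow\varepsilon$ and $B_{F\circ a}(1_\V,w)=F^{-1}(\twoheaduparrow w)$.
\item By (1) the latter set is $\mathscr T(a)$-open and contains $1_\V$. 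So some $\twoheaduparrow\varepsilon$ with $\varepsilon\ll 1_\V$ lies inside it, which is exactly the restated form of (2).
\end{itemize}
The main care here is verifying that this space really is a $\mathscr V$-metric space that is both separated and symmetric.

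\emph{(2)$\Leftrightarrow$(3) for isotone $F$.} Condition (3) says $\bigvee\{F(u):u\ll 1_\V\}=1_\W$, and this family is directed by isotonicity.
\begin{itemize}
\item Assume (2) and let $w\ll 1_\W$. Take $\varepsilon$ as in (2) and interpolate $\varepsilon\ll u\ll 1_\V$. Then $F(u)\succeq w$, so the join dominates $\bigvee\twoheaddownarrow 1_\W=1_\W$.
\item Assume (3) and let $w\ll 1_\W$. Interpolate $w\ll w'\ll 1_\W$ and pick $u\ll 1_\V$ with $w'\preceq F(u)$. Then $\twoheaduparrow u$ is a Scott neighbourhood of $1_\V$, and isotonicity gives $F(\twoheaduparrow u)\subseteq\twoheaduparrow w$.
\end{itemize}
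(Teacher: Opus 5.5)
Your proof is correct. For (1)$\Rightarrow$(2) and for the equivalence with (3) it is essentially the paper's argument. The paper tests on the same space $\mathscr V$ with the symmetrized residuation, using $h(u,v)\ast h(v,u)$ where you use $h(u,v)\wedge h(v,u)$; both are separated and symmetric and satisfy $a(1_\V,v)=v$. The paper also proves (3)$\Rightarrow$(1) directly rather than (3)$\Rightarrow$(2), but with the same interpolation step.

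The real difference is in (2)$\Rightarrow$(1). The paper takes $G\in\mathscr T(F\circ a)$ and $x\in G$, and picks $\varepsilon\ll_\W 1_\W$ with $B_{F\circ a}(x,\varepsilon)\subseteq G$. It then gets $\delta\ll_\V 1_\V$ with $F(\twoheaduparrow\delta)\subseteq\twoheaduparrow\varepsilon$ and simply notes that $B_a(x,\delta)\subseteq B_{F\circ a}(x,\varepsilon)$. Membership in $\mathscr T(a)$ only requires a ball \emph{centred} at each point of $G$, so concentric balls suffice.

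You instead show that each ball $B_{F\circ a}(x,\delta)$ contains an $a$-ball around every one of its points $y$. This forces you to redo the ``balls are open'' argument: interpolation, the directed decomposition $s=\bigvee_\eta s\star\eta$, and (VC2) for $F\circ a$. Your argument is valid, but the extra machinery buys nothing. The paper's concentric argument uses nothing about $F$ beyond Scott continuity at $1_\V$, so it is equally free of isotonicity. It is therefore just as uniform over the symmetric and separated settings. The feature you cite as motivating your route is not actually specific to it.
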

	
	\begin{proof}
		
		(1) $\Rightarrow$ (2). Let us consider the separated and symmetric $\mathscr{V}$-metric space $
		(\mathscr V,a)$ associated with the symmetrization of the residuation $h$ on $\mathscr{V}$ (Example \ref{ex:vcat}), namely,
		$$a(u,v)=h(u,v)\ast h(v,u)=(u\searrow v)\ast(v\searrow u)\qquad \text{ for every }u,v\in\mathscr{V}.$$
		Given $\varepsilon \ll_\W 1_\W$, then $\twoheaduparrow\varepsilon$ is a basic open set in the Scott topology containing $1_\W.$ 
		By assumption, there exists $\delta \ll_\V 1_\V$ such that
		\[
		B_a(1_\V,\delta) 
		\subseteq 
		B_{F \circ a}(1_\V,\varepsilon).
		\]
		Let $\gamma \in \twoheaduparrow\delta$. 
		Since
		\[a(1_\mathscr V,\gamma)=h(1_\mathscr V,\gamma)\ast h(\gamma,1_\mathscr V)=\gamma,\]
		it follows that
		\[
		\gamma \in B_a(1_\V,\delta)
		\subseteq 
		B_{F \circ a}(1_\V,\varepsilon).
		\]
		Hence,
		\[
		\varepsilon \ll_\W F \circ a(1_\V,\gamma) 
		= F(\gamma),
		\]
		so $F(\gamma)\in\twoheaduparrow\varepsilon.$ Therefore $F$ is Scott continuous at $1_\V$.
		
		(2) $\Rightarrow$ (1). Let $(X,a)$ be a $\mathscr{V}$-metric space in the class under consideration. We prove that 
		\[\mathscr{T}(F \circ a) \subseteq \mathscr{T}(a).\]
		Let $G \in \mathscr{T}(F \circ a)$ and $x \in G$. Then there exists $\varepsilon \ll_\W 1_\W$ such that
		\[B_{F \circ a}(x,\varepsilon) \subseteq G.\]
		Since $F$ is Scott continuous at $1_\mathscr V,$ we can find $\delta\ll 1_\mathscr{V}$ such that
		\[F(\twoheaduparrow \delta)\subseteq\twoheaduparrow \varepsilon.\]
		We claim that 
		\[B_a(x,\delta) \subseteq B_{F \circ a}(x,\varepsilon).\]
		If $y\in B_a(x,\delta)$ then $a(x,y)\in\twoheaduparrow\delta$ so $F(a(x,y))\in\twoheaduparrow \varepsilon,$ that is, $y\in B_{F \circ a}(x,\varepsilon).$ This proves the claim.

		(2)$\Rightarrow$ (3). This implication does not require any isotonicity assumption. Indeed, in any preservation setting,  by \cite{FBRL25a}  $F(1_\mathscr V)=1_\mathscr W$ and since $\mathscr W$ is continuous, we have
		\[F(1_\mathscr V)=1_\mathscr W=\bigvee \twoheaddownarrow 1_\mathscr W.\]
		Let $\varepsilon\ll_\mathscr W 1_\mathscr W.$ Since $F$ is Scott continuous at $1_\mathscr V,$ there exists
		$\delta\ll_\mathscr V 1_\mathscr V$ such that
		\[F(\twoheaduparrow\delta)\subseteq \twoheaduparrow\varepsilon.\]
		By the interpolation property, there exists
		$u_\varepsilon\in\mathscr V$
		such that
		\[\delta\ll_\mathscr V u_\varepsilon\ll_\mathscr V 1_\mathscr V.\]
		Hence $u_\varepsilon\in\twoheaduparrow\delta,$ and therefore $F(u_\varepsilon)\in\twoheaduparrow\varepsilon,$ that is, $\varepsilon\ll_\mathscr W F(u_\varepsilon).$
		Consequently,
		\[1_\mathscr W=\bigvee\{\varepsilon\in\mathscr W:\varepsilon\ll_\mathscr W 1_\mathscr W\}
		\preceq_\mathscr W
		\bigvee\{F(u_\varepsilon):\varepsilon\ll_\mathscr W 1_\mathscr W\}
		\preceq_\mathscr W
		\bigvee\{F(u):u\ll_\mathscr V 1_\mathscr V\}.
		\]
		Since $1_\mathscr W$ is the top element of $\mathscr W,$ it follows that
		\[F(1_\mathscr V)=\bigvee\{F(u):u\ll_\mathscr V 1_\mathscr V\}.\]
		
		(3) $\Rightarrow$ (1). 	
		Assume that we are in any of the preservation settings under consideration in which $F$ is isotone. Moreover, $F(1_\V)=1_\W$ by \cite[Theorems 3.13, 3.17, Corollary 3.26]{FBRL25a}.
		
		Let $(X,a)$ be a $\mathscr{V}$-metric space belonging to the
		preservation setting under consideration. We must prove that $\mathscr{T}(F \circ a) \subseteq \mathscr{T}(a).$
		Given $G \in \mathscr{T}(F \circ a)$ and $x \in G$, we can find $\varepsilon \ll_\W 1_\W$ such that
		\[ B_{F \circ a}(x,\varepsilon) \subseteq G.\]
		Since $F$ is isotone, $\bigl\{ F(u):u \ll_\V 1_\V\bigr\}$ is directed and
		\[
		1_\W 
		= F(1_\V)
		= \bigvee \bigl\{ F(u):u \ll_\V 1_\V \bigr\}.
		\]
		By \cite[Theorem I-1.9]{GierzHoffKeiLawMisScott}, there exists 
		$\delta\ll_\V 1_\V$ such that $\varepsilon\ll_\W F(\delta).$     
		
		We next show that
		\[
		B_a(x,\delta) \subseteq B_{F \circ a}(x,\varepsilon).
		\]
		For every $y \in B_a(x,\delta)$, 
		$\delta \ll_\V a(x,y)$. 
		Since $F$ is isotone, it follows that $F(\delta) \preceq_\W F(a(x,y)).$
		Combining this with $\varepsilon \ll_\W F(\delta)$, we obtain
		$\varepsilon \ll_\W F(a(x,y)),$
		which shows that $y \in B_{F \circ a}(x,\varepsilon)$.
		
		Consequently,
		\[
		B_a(x,\delta) \subseteq B_{F \circ a}(x,\varepsilon) \subseteq G,
		\]
		and therefore $G \in \mathscr{T}(a)$.
	\end{proof}

	To illustrate the applicability of the previous result, we will focus on specific continuous quantales. In this context, let's consider a preserving function $F: \mathsf{P}_+^{\,I} \to \mathsf{P}_+.$ Next, we will interpret what it means for $F$ to be an upper strongly preserving function. To accomplish this, we recall the concept of an extended quasi-pseudometric aggregation function on products and sets, as presented in \cite{MassaVale13,FBRL25a}.
	
	\begin{definition}\label{def:eqpmaf}
		A function $F: [0,+\infty]^I\rightarrow [0,+\infty]$ is said to be 
		\begin{itemize}
			\item an \textbf{extended quasi-pseudometric aggregation function on products} if whenever $\big\{(X_i,d_i):i\in I\big\}$ is a family of extended quasi-pseudometric spaces, then $F\circ d_\Pi$ is an extended quasi-pseudometric on $\prod_{i\in I} X_i$ where $d_\Pi:\left(\prod_{i\in I} X_i\right)\times \left(\prod_{i\in I} X_i\right)\to [0,+\infty]^I$ is given by
			$$d_\Pi(x,y)=\big(d_i(x_i,y_i)\big)_{i\in I}$$
			for every $x,y\in \prod_{i\in I} X_i;$
			\item an \textbf{extended quasi-pseudometric aggregation function on sets} if whenever $\{d_{i}:i \in I\}$ is a family of extended quasi-pseudometrics on a nonempty set $X$, then $F \circ d_\Delta$ is an extended quasi-pseudometric on $X$, where $d_\Delta: X \times X \rightarrow [0,+\infty]^{I}$ is given by $$d_\Delta(x,y)=\big(d_{i}(x,y)\big)_{i \in I}$$
			for every $x,y \in X.$
		\end{itemize}
		
		Similar definitions can be stated for quasi-metrics, pseudometrics and metrics.
	\end{definition}

	\begin{definition}\label{def:upper_s_qpmaf}
		Let $F:[0,+\infty]^I\to [0,+\infty]$ be a map. 
		\begin{enumerate}[(1)]
			\item If $F$ is an extended (quasi-)(pseudo)metric aggregation function on products and, for every family $\big\{(X_i,d_i):i\in I\big\}$ of extended (quasi-)(pseudo)metric spaces, it holds that 
			$$\mathscr{T}(F\circ d_\Pi)\subseteq \prod \mathscr{T}(d_i),$$  
			then we say that $F$ is an \textbf{upper strongly extended (quasi-)(pseudo)metric aggregation function on products}.\\
			When the converse inclusion holds
			$$\prod \mathscr{T}(d_i)\subseteq \mathscr{T}(F\circ d_\Pi),$$
			$F$ is called a \textbf{lower strongly extended (quasi-)(pseudo)metric aggregation function on products}.
			\item If $F$ is an extended (quasi-)(pseudo)metric aggregation function on sets and, for every family $\big\{(X,d_i):i\in I\big\}$ of extended (quasi-)(pseudo)metric spaces, it holds that 
			$$\mathscr{T}(F\circ d_\Delta)\subseteq \bigvee \mathscr{T}(d_i),$$ 
			then we say that $F$ is an \textbf{upper strongly extended (quasi-)(pseudo)metric aggregation function on sets}. \\
			When the converse inclusion holds
			$$\bigvee \mathscr{T}(d_i)\subseteq \mathscr{T}(F\circ d_\Delta),$$
			$F$ is called a \textbf{lower strongly extended (quasi-)(pseudo)metric aggregation function on sets}.
		\end{enumerate}
		
	\end{definition}
	
	We next prove that, for functions $F:\mathsf{P}_+^{\,I}\to \mathsf{P}_+$, upper strongly preserving is equivalent to being an upper strongly extended quasi-pseudometric aggregation function on products or on sets.

	\begin{proposition}\label{prop:upper_preserving_equal_upper_eqpm}
		Let $F:\mathsf{P}_+^{\,I}\to \mathsf{P}_+$ be a preserving function (resp. a symmetrically preserving function). The following statements are equivalent:
		\begin{enumerate}[(1)]
			\item $F$ is upper strongly preserving (resp. upper strongly symmetrically preserving);
			\item $F$ is an upper strongly extended quasi-pseudometric aggregation function on products (resp. an upper strongly extended pseudometric aggregation function on products);
			\item $F$ is an upper strongly extended quasi-pseudometric aggregation function on sets (resp. an upper strongly extended pseudometric aggregation function on sets).
		\end{enumerate}
	\end{proposition}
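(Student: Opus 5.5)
We will prove (1) $\Rightarrow$ (2), (1) $\Rightarrow$ (3), (2) $\Rightarrow$ (1) and (3) $\Rightarrow$ (1), routing everything through Theorem \ref{thm:us}. Thus (1) may be replaced by the condition that $F$ is Scott continuous at $0_I$. In the symmetric case we use the symmetric version of Theorem \ref{thm:us}. Recall that preserving functions $\mathsf P_+^{\,I}\to\mathsf P_+$ are exactly the extended quasi-pseudometric aggregation functions on products and on sets \cite[Lemma~3.7]{FBRL25a}, and similarly in the symmetric case. So in all three statements the algebraic part is automatic, and only the topological inclusions must be compared.

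\emph{(1) $\Rightarrow$ (2) and (1) $\Rightarrow$ (3).} Given a family $\{(X_i,d_i):i\in I\}$ of extended (quasi-)pseudometric spaces, $(\prod_{i\in I}X_i,d_\Pi)$ is a (symmetric) $\mathsf P_+^{\,I}$-metric space. By Example \ref{ex:product}(4), $\mathscr T(d_\Pi)=\prod\mathscr T(d_i)$. Upper strong preservation then gives $\mathscr T(F\circ d_\Pi)\subseteq\prod\mathscr T(d_i)$. Likewise, for a family $\{d_i\}$ on a common set $X$, the map $d_\Delta$ is a (symmetric) $\mathsf P_+^{\,I}$-metric with $\pi_i\circ d_\Delta=d_i$. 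Example \ref{ex:product}(3) gives $\mathscr T(d_\Delta)=\bigvee\mathscr T(d_i)$, which yields (3).

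\emph{(3) $\Rightarrow$ (1).} Every $\mathsf P_+^{\,I}$-metric $a$ on $X$ equals $d_\Delta$ for the family $d_i=\pi_i\circ a$. Each $d_i$ is an extended quasi-pseudometric, and it is symmetric if $a$ is. By Example \ref{ex:product}(3), $\mathscr T(a)=\bigvee\mathscr T(d_i)$, so (3) gives $\mathscr T(F\circ a)\subseteq\mathscr T(a)$ directly.

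\emph{(2) $\Rightarrow$ (1).} This is the step we expect to be the main obstacle, since we only know the inclusion on product spaces. We will prove Scott continuity at $0_I$ by mimicking the proof of (1) $\Rightarrow$ (2) in Theorem \ref{thm:us} with a product test space. For each $i\in I$, let $X_i=[0,+\infty]$ with the symmetric extended pseudometric
\[
d_i(s,t)=|s-t|\ (s,t<\infty),\quad d_i(\infty,t)=d_i(t,\infty)=\infty\ (t\neq\infty),\quad d_i(\infty,\infty)=0.
\]
Take the base point $x=0_I\in\prod X_i$. Then $d_\Pi(0_I,\gamma)=\gamma$ for every $\gamma\in[0,+\infty]^I$. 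Let $\varepsilon>0$. The set $\{y:F(d_\Pi(0_I,y))<\varepsilon\}$ is a $\mathscr T(F\circ d_\Pi)$-neighbourhood of $0_I$, and by (2) it is a product neighbourhood. Hence it contains $\{y:y_j<\delta_j,\ j\in J\}$ for some finite $J\subseteq I$ and some $\delta_j>0$. Plugging in $y=\gamma$ gives $F(\gamma)<\varepsilon$ whenever $\gamma_j<\delta_j$ for all $j\in J$. By the description of the Scott neighbourhood basis of $0_I$ after Example \ref{ex:left_order_topology}, this is exactly Scott continuity of $F$ at $0_I$, where $F(0_I)=0$ because $F$ is preserving.

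The only delicate points are checking that the chosen $d_i$ is a genuine symmetric extended pseudometric, and that the ball uses the right inequality convention; both are immediate. Theorem \ref{thm:us} then gives (1).
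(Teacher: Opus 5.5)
Your proof is correct, but it is organized differently from the paper's.

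Your (1)$\Rightarrow$(2) and (3)$\Rightarrow$(1) are the paper's arguments. The paper then closes the cycle (1)$\Rightarrow$(2)$\Rightarrow$(3)$\Rightarrow$(1) without invoking Theorem~\ref{thm:us}. Its step (2)$\Rightarrow$(3) is a purely topological transfer: it embeds a common set $X$ diagonally into the product of the spaces $(X,d_i)$, and uses the quoted fact that $(X,\bigvee\mathscr T(d_i))$ is homeomorphic to the diagonal of $(\prod X_i,\prod\mathscr T(d_i))$.

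You do two things differently. First, you prove (1)$\Rightarrow$(3) directly, which is immediate since $d_\Delta$ is itself a $\mathsf P_+^{\,I}$-metric with $\mathscr T(d_\Delta)=\bigvee\mathscr T(d_i)$. Second, you close the loop with (2)$\Rightarrow$(1) by extracting Scott continuity at $0_I$ from a single product family: $[0,+\infty]$ with its extended metric in every coordinate, for which $d_\Pi(0_I,\gamma)=\gamma$. This is the product analogue of the residuation space used in the proof of Theorem~\ref{thm:us}. The details all check out:
\begin{itemize}
\item the sets $\{\gamma:\gamma_j<\delta_j,\ j\in J\}$ are exactly the basic Scott neighbourhoods of $0_I$;
\item $F(0_I)=0$ holds for (symmetrically) preserving $F$;
\item finite $\varepsilon$ suffices;
\item the test spaces are symmetric, so the pseudometric case is covered as well.
\end{itemize}

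What each route buys: the paper's argument is independent of the Scott-continuity characterization, and it shows generically that the product condition transfers to the set condition. Yours stays inside the paper, needing no appeal to the diagonal homeomorphism, and shows that one explicit product test family already detects upper strong preservation. A further bonus: your test spaces are extended \emph{metric} spaces, and the (2)$\Rightarrow$(1) part of Theorem~\ref{thm:us} works for any class of $\mathscr V$-metric spaces. So the same argument gives the implication (2)$\Rightarrow$(1) of Proposition~\ref{prop:upper_seppreserving_equal_upper_eqm} directly, bypassing the Kolmogorov-quotient construction used there.
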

	
	\begin{proof}
		(1) $\Rightarrow$ (2) Since $F$ is preserving, it follows from \cite[Lemma 3.7]{FBRL25a} (see \cite[Remark 3.9]{FBRL25a} for the symmetric setting) that $F$ is an extended quasi-pseudometric aggregation function on products (resp. an extended pseudometric aggregation function on products). \\Consider an arbitrary family $\big\{(X_i,d_i):i\in I\big\}$ of extended quasi-pseudometric spaces (resp. extended pseudometric spaces). By Example \ref{ex:product}, $(\prod_{i\in I} X_i,d_\Pi)$ is a  $\mathsf{P}_+^{\,I}$-metric space (resp. a symmetric $\mathsf{P}_+^{\,I}$-metric space) so $(\prod_{i\in I} X_i,F\circ d_\Pi)$ is a $\mathsf{P}_+$-metric space (resp. a symmetric $\mathsf{P}_+$-metric space), that is, an extended quasi-pseudometric space (resp. an extended pseudometric space). By assumption $$\mathscr{T}(F\circ d_\Pi)\subseteq \mathscr{T} (d_\Pi)$$
		and since, by Example \ref{ex:product},  $\mathscr{T}(d_\Pi)$ is equal to the product topology, the statement holds.
		
		(2) $\Rightarrow$ (3) Recall that every extended \mbox{quasi-}pseu\-dometric aggregation function on products is an extended quasi-pseudo\-metric aggregation function on sets (in fact, the two concepts coincide \cite[Lemma 3.7, Remark 3.9]{FBRL25a}). This observation, together with the fact that, for every family $\big\{(X,d_i):i\in I\big\}$ of extended quasi-pseudometric spaces then $\big(X,\bigvee_{i\in I}\mathscr{T}(d_i)\big)$ is homeomorphic to the diagonal of $\big(\prod_{i\in I} X_i,\prod\mathscr{T}(d_i)\big)$ (see \cite{FBRL26a}), yields the desired inclusion.
		
		The proof for upper strongly extended pseudometric aggregation functions on products is analogous.
		
		(3) $\Rightarrow$ (1) By \cite[Lemma 3.7, Remark 3.9]{FBRL25a}, $F$ is a preserving function. Furthermore,  if $(X,a)$ is a $\mathsf{P}_+^{\,I}$-metric space then $\big\{(X,d_i):i\in I\big\}$ is a family of extended quasi-pseudometric spaces, where $d_i=\pi_i\circ a$ and $\pi_i$ is the $i$th coordinate projection for every $i\in I$. It is clear that
		$$d_\Delta=a$$
		so $\mathscr{T}(F\circ d_\Delta)=\mathscr{T}(F\circ a)\subseteq \bigvee\mathscr{T}(d_i)=\mathscr{T}(a)$ (see Example \ref{ex:product}(3)).
		
		The proof for upper strongly extended pseudometric aggregation functions on sets is analogous.
	\end{proof}
	
	Since the family $\{[0,\varepsilon):\varepsilon>0\}$ forms a basis for the Scott topology on $([0,+\infty],\leq^{\mathrm{op}})$, Theorem \ref{thm:us}, Example \ref{ex:left_order_topology}, and the previous result immediately yield the following corollary.
	
	\begin{corollary}[\mbox{cf.\cite[Theorems 3.8 and 4.15]{FBRL26a}}]\label{cor:upper_strongly_ext_qpm}
		Let $F:[0,+\infty]^I\to [0,+\infty]$ be an extended quasi-pseudometric aggregation function on products 
		(resp. an extended pseudometric aggregation function on products). The following statements are equivalent:
		\begin{enumerate}[(1)]
			\item $F$ is an upper strongly extended quasi-pseudometric aggregation function on products 
			(resp. an upper strongly extended pseudometric aggregation function on products);
			
			\item $F$ is an upper strongly extended quasi-pseudometric aggregation function on sets 
			(resp. an upper strongly extended pseudometric aggregation function on sets);
			
			\item $F$ is continuous at $0_I$, when $[0,+\infty]$ is endowed with the left-order topology.
		\end{enumerate}
		
		Moreover, in the quasi-pseudometric case, the previous statements are also equivalent to:
		\begin{enumerate}[(4)] 
			\item $F(0_I)=\bigvee \bigl\{F(u):u\ll_{\mathsf{P}_+^I} 0_I\bigr\}.$
		\end{enumerate}
	\end{corollary}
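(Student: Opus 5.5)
The corollary follows by assembling Proposition \ref{prop:upper_preserving_equal_upper_eqpm} with Theorem \ref{thm:us}; the only real work is to identify Scott continuity at the unit with continuity for the left-order topology.

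First, $F$ is an extended quasi-pseudometric (resp. pseudometric) aggregation function on products. By \cite[Lemma 3.7, Remark 3.9]{FBRL25a}, it is therefore a preserving (resp. symmetrically preserving) function $\mathsf P_+^{\,I}\to\mathsf P_+$. Proposition \ref{prop:upper_preserving_equal_upper_eqpm} then gives (1)$\Leftrightarrow$(2), both equivalent to $F$ being upper strongly (symmetrically) preserving. Theorem \ref{thm:us}, in its plain and symmetric versions, shows that this is equivalent to Scott continuity of $F$ at $1_{\mathsf P_+^{\,I}}=0_I$.

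Second, I translate Scott continuity into (3). By Example \ref{ex:left_order_topology}, the Scott topology of $([0,+\infty],\leq^{\mathrm{op}})$ is the left-order topology. By the remark after that example, combined with Example \ref{ex:waybelow}(3) and the fact that the sets $\twoheaduparrow x$ form a basis, the Scott topology of $\mathsf P_+^{\,I}$ is the product of the left-order topologies. The basic neighbourhoods of $0_I$ are the finite-support sets $\{u: u_j<\varepsilon_j,\ j\in J\}$. Hence Scott continuity at $0_I$ is exactly (3), read with the product of left-order topologies on $[0,+\infty]^I$ and the left-order topology on the codomain. Concretely, (3) says: for every $\varepsilon>0$ there exist a finite $J\subseteq I$ and $\delta_j>0$ such that $u_j<\delta_j$ for all $j\in J$ implies $F(u)<\varepsilon$. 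This uses $F(0_I)=0$, which holds for preserving functions by \cite{FBRL25a}.

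Third, for the quasi-pseudometric case, preserving functions are isotone, being lax morphisms. So condition (3) of Theorem \ref{thm:us}, with $1_\V=0_I$, is available and becomes statement (4). In the pseudometric case isotonicity may fail, since symmetric preservation does not force the lax morphism property \cite[Example~3.27]{FBRL25a}. This is why (4) is stated only in the quasi-pseudometric case.

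The main obstacle is the identification in the second step. It relies on the finite-support description of $\ll$ on products and on checking that continuity at the single point $0_I$ (rather than global continuity) is what Theorem \ref{thm:us} delivers. The rest is direct citation.
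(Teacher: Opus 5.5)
Your proposal is correct and follows essentially the paper's own route. You use Proposition~\ref{prop:upper_preserving_equal_upper_eqpm} for (1)$\Leftrightarrow$(2) and the link to upper strong (symmetric) preservation, Theorem~\ref{thm:us} for Scott continuity at $0_I$, Example~\ref{ex:left_order_topology} with the product description of the Scott topology on $\mathsf P_+^{\,I}$ to identify this with left-order continuity, and the isotone case of Theorem~\ref{thm:us} to get (4), which is why (4) is restricted to the quasi-pseudometric setting.
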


	We note that the Proposition \ref{prop:upper_preserving_equal_upper_eqpm} does not hold for upper strongly extended (quasi-)metric aggregation functions, as illustrated by the following example.
	
	\begin{example}
		Consider the projection $P:[0,+\infty]^2\to [0,+\infty]$  onto the first coordinate. It is straightforward to verify that $P$ is an extended \mbox{(quasi-)}metric aggregation function on sets. Moreover, for every nonempty set $X$ and every pair of extended (quasi-)metrics $d_1,d_2$ on $X$, one has
		\[\mathscr{T}(P\circ d_\Delta)=\mathscr{T}(d_1)\subseteq \mathscr{T}(d_1)\vee \mathscr{T}(d_2).\]
		Hence, $P$ is an upper strongly extended (quasi-)metric aggregation function on sets.
		
		Nevertheless, $P$ is not an extended (quasi-)metric aggregation function on products. Indeed, let $\mathbb{R}$ be endowed with the Euclidean metric $e$. Then
		\[(P\circ e_\Pi)\big((0,1),(0,0)\big)=P\big(e(0,0),e(1,0)\big)
		=0,\]
		even though $(0,1)\neq (0,0)$. Consequently, $P\circ e_\Pi$ fails to satisfy the separation axiom and therefore is not a (quasi-)metric on $\mathbb{R}^2$.
	\end{example}

	However, although the equivalence among the three assertions in Proposition~\ref{prop:upper_preserving_equal_upper_eqpm} cannot be obtained in the upper strongly separately preserving setting, the equivalence between the first two assertions can still be established. Before proving this, we first deal with the case of functions that are separately and symmetrically preserving, which was not explicitly addressed in \cite{FBRL25a}. We will use the following concept:
	
	\begin{definition}
		Let $(\mathscr{V},\preceq_\V,\ast)$ be a quantale. A triple
		$(u,v,w)\in\mathscr{V}^3$ is said to be an \textbf{asymmetric triangle
			triplet} in $(\mathscr{V},\preceq_\V,\ast)$ if
		\[u\ast v\preceq_\V w.\]
		It is said to be a \textbf{triangle triplet} if every permutation of
		$(u,v,w)$ is an asymmetric triangle triplet.
		
		A function
		\[F:(\mathscr{V},\preceq_\V,\ast)\longrightarrow (\mathscr{W},\preceq_\W,\star)\]
		between two quantales is said to \textbf{preserve asymmetric triangle
			triplets} if
		\[(F(u),F(v),F(w))\]
		is an asymmetric triangle triplet in $(\mathscr{W},\preceq_\W,\star)$
		whenever $(u,v,w)$ is an asymmetric triangle triplet in
		$(\mathscr{V},\preceq_\V,\ast)$.
		
		Similarly, $F$ is said to \textbf{preserve triangle triplets} if
		\[(F(u),F(v),F(w))\]
		is a triangle triplet in $(\mathscr{W},\preceq_\W,\star)$ whenever
		$(u,v,w)$ is a triangle triplet in $(\mathscr{V},\preceq_\V,\ast)$.
	\end{definition}
	
	\begin{theorem}\label{thm:sep_sym_preserving}
		Let $F:(\mathscr{V},\preceq_\V,\ast)\to (\mathscr{W},\preceq_\W,\star)$ be a map between two commutative integral quantales. Then $F$ is separately and symmetrically preserving if and only if $F$ preserves triangle triplets and $F^{-1}(1_\W)=\{1_\V\}.$
	\end{theorem}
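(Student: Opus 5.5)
Both directions are proved by testing $F$ on suitable small $\mathscr V$-categories, in the spirit of the characterization of preserving functions as lax morphisms in \cite{FBRL25a}. The two defining conditions of a $\mathscr W$-category, (VC1) and (VC2), together with symmetry and separation, are checked directly.

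\emph{Necessity.} Assume $F$ is separately and symmetrically preserving.

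First, $F^{-1}(1_\W)=\{1_\V\}$. Take a two-point set $X=\{x,y\}$ with $a(x,x)=a(y,y)=1_\V$ and $a(x,y)=a(y,x)=u$. Since $1_\V$ is the top and $\ast$ is integral, $u\ast u\preceq u$ and $u\ast 1_\V=u$, so (VC2) holds. Hence for $u\neq 1_\V$ this is a separated symmetric $\mathscr V$-category. Applying (VC1) to $(X,F\circ a)$ gives $F(1_\V)=1_\W$. Separation of $(X,F\circ a)$ forces $F(u)\neq 1_\W$ whenever $u\ne 1_\V$.

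Second, $F$ preserves triangle triplets. Let $(u,v,w)$ be a triangle triplet. Take three distinct points $x,y,z$ and set $a(x,z)=a(z,x)=u$, $a(z,y)=a(y,z)=v$, $a(x,y)=a(y,x)=w$, with $a=1_\V$ on the diagonal.
\begin{itemize}
\item Every instance of (VC2) with three distinct points is one of the permutation inequalities of the triple.
\item Every instance involving a repeated point reduces to $s\ast 1_\V\preceq s$ or $s\ast s\preceq 1_\V$, both trivial by integrality.
\end{itemize}
So this is a symmetric $\mathscr V$-category. The one subtlety is separation when some of $u,v,w$ equal $1_\V$. I would handle this by noting that if, say, $u=1_\V$, the permutation inequalities force $v=w$ (since $v\preceq w$ and $w\preceq v$). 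The required triangle inequalities for $(F(u),F(v),F(w))$ then follow from $F(1_\V)=1_\W$ and integrality alone. So it suffices to treat $u,v,w\neq 1_\V$, where $a$ is separated. Then (VC2) for $F\circ a$ yields all six permutation inequalities, so $(F(u),F(v),F(w))$ is a triangle triplet.

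\emph{Sufficiency.} Assume $F$ preserves triangle triplets and $F^{-1}(1_\W)=\{1_\V\}$. Let $(X,a)$ be a separated symmetric $\mathscr V$-category.
\begin{itemize}
\item (VC1): $a(x,x)=1_\V$, so $F(a(x,x))=1_\W$.
\item Symmetry of $F\circ a$ is immediate from symmetry of $a$.
\item Separation: if $F(a(x,y))=1_\W$, the fibre condition gives $a(x,y)=1_\V$, hence $x=y$.
\item (VC2): for $x,y,z$, consider $(a(x,z),a(z,y),a(x,y))$. By symmetry of $a$, every permutation of this triple is an instance of (VC2) for $a$ applied to a suitable ordering of the three points (e.g. $a(x,y)\ast a(y,z)\preceq a(x,z)$). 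So it is a triangle triplet, its image under $F$ is a triangle triplet, and in particular $F(a(x,z))\star F(a(z,y))\preceq F(a(x,y))$.
\end{itemize}
This gives the converse.

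The main obstacle is the degenerate case in necessity: the test space built from a triangle triplet containing $1_\V$ is not separated. The remedy is the reduction above, where coordinates equal to $1_\V$ collapse the triplet to $(1_\V,v,v)$-type triples, whose images are triangle triplets directly.
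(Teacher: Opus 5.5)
Your proof is correct and follows the paper's argument essentially step for step. Necessity uses the same two-point and three-point test spaces and the same reduction of triangle triplets containing $1_\V$ to ones of the form $(1_\V,v,v)$. For sufficiency, you verify (VC2) for $F\circ a$ directly by noting that $(a(x,z),a(z,y),a(x,y))$ is a triangle triplet whenever $a$ is symmetric, whereas the paper invokes \cite[Theorem 3.21]{FBRL25a} for symmetric preservation and checks only separation by hand.

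One cosmetic fix: obtain $F(1_\V)=1_\W$ from the one-point $\mathscr V$-category, as the paper does, because your two-point space presupposes an element $u\neq 1_\V$. If $\mathscr V$ is the one-element quantale, no such $u$ exists.
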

	
	\begin{proof} Suppose first that $F$ is separately and symmetrically preserving. 
		
		First, considering the one-point $\mathscr V$-category, we obtain
		$F(1_\V)=1_\W$. Suppose now that $u\neq1_\V$ and
		$F(u)=1_\W$. Let $X=\{x,y\}$ and define $a:X\times X\to\mathscr{V}$ as
		\[a(x,x)=a(y,y)=1_\V,\qquad a(x,y)=a(y,x)=u.\]
		Since $u\ast u\preceq_\V u$, the pair $(X,a)$ is a separated and
		symmetric $\mathscr V$-category. However, $(X,F\circ a)$ is not
		separated, because
		\[F(a(x,y))=F(a(y,x))=1_\W,\]
		a contradiction. Hence
		\[F^{-1}(1_\W)=\{1_\V\}.\]
		
		We next prove that $F$ preserves triangle triplets. Let $(u,v,w)$ be a triangle triplet in $\mathscr V$. If one of its entries is $1_\V$, say $w=1_\V$, then the triangle inequalities imply $u=v$. Hence the triplet is of the form $(u,u,1_\V)$, and its image 
		\[ (F(u),F(u),F(1_\V))=(F(u),F(u),1_\W) \] 
		is clearly a triangle triplet in $\mathscr W$. We may therefore assume that none of $u,v,w$ is equal to $1_\V$. Let $X=\{x_1,x_2,x_3\}$ and define $a:X\times X\to\mathscr V$ by 
		\[ a(x_i,x_j)= \begin{cases} 
			1_\V &\text{if } i=j,\\ 
			u &\text{if } \{i,j\}=\{1,2\},\\ 
			v &\text{if } \{i,j\}=\{2,3\},\\ 
			w &\text{if } \{i,j\}=\{1,3\}. \end{cases} 
		\] 
		Since $(u,v,w)$ is a triangle triplet, $(X,a)$ is a separated and symmetric $\mathscr V$-category. By hypothesis, $(X,F\circ a)$ is a separated and symmetric $\mathscr W$-category. In particular, applying the triangle inequality to the points $x_1,x_2,x_3$, we obtain 
		\[ F(u)\star F(v) = F(a(x_1,x_2))\star F(a(x_2,x_3)) \preceq_\W F(a(x_1,x_3)) = F(w). \] 
		The other two triangle inequalities are obtained in the same way by permuting the points $x_1,x_2,x_3$. Hence $(F(u),F(v),F(w))$ is a triangle triplet in $\mathscr W$. 
		
		Conversely, by \cite[Theorem 3.21]{FBRL25a}, $F$ is symmetrically preserving. Hence, if $(X,a)$ is a separated and symmetric $\mathscr V$-category, then $(X,F\circ a)$ is a symmetric $\mathscr W$-category. It remains only to prove that it is separated.
		
		Let $x,y\in X$ be such that $F(a(x,y))=1_\W.$ By the assumption on the fibre of $1_\W,$ we get $a(x,y)=1_\V.$ Since $(X,a)$ is symmetric and separated, it follows that $x=y.$ Therefore $(X,F\circ a)$ is separated, and so $F$ is separately and symmetrically preserving. 
	\end{proof}

	\begin{proposition}\label{prop:upper_seppreserving_equal_upper_eqm}
		Let $F:\mathsf{P}_+^{\,I}\to \mathsf{P}_+$. The following statements are equivalent:
		\begin{enumerate}[(1)]
			\item $F$ is upper strongly separately preserving (upper strongly separately and symmetrically preserving);
			\item $F$ is an upper strongly extended quasi-metric aggregation function on products (upper strongly extended metric aggregation function on products).        
		\end{enumerate}
	\end{proposition}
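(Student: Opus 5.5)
We establish (1)$\Rightarrow$(2) and (2)$\Rightarrow$(1) separately; the main work is in the second direction.

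(1)$\Rightarrow$(2). Assume $F$ is upper strongly separately preserving. Let $\{(X_i,d_i):i\in I\}$ be a family of extended quasi-metric spaces.
\begin{itemize}
\item $d_\Pi$ is separated. If $d_\Pi(x,y)=0_I=d_\Pi(y,x)$, then $d_i(x_i,y_i)=0=d_i(y_i,x_i)$ for every $i$. Separation of each $d_i$ gives $x_i=y_i$ for all $i$, hence $x=y$.
\item $F\circ d_\Pi$ is an extended quasi-metric. Since $(\prod_{i\in I}X_i,d_\Pi)$ is a separated $\mathsf P_+^{\,I}$-metric space, this follows directly from the assumption.
\item The topological inclusion. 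The hypothesis gives $\mathscr T(F\circ d_\Pi)\subseteq\mathscr T(d_\Pi)$, and $\mathscr T(d_\Pi)=\prod\mathscr T(d_i)$ by Example \ref{ex:product}(4).
\end{itemize}
The symmetric version is identical, noting that $d_\Pi$ is symmetric when every $d_i$ is.

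(2)$\Rightarrow$(1). Assume (2). I would not check directly that $F$ sends separated $\mathsf P_+^{\,I}$-metrics to separated ones, since a general separated $\mathsf P_+^{\,I}$-metric need not come from a product. Instead, I would use Theorem \ref{thm:us}: it suffices to show that $F$ is separately preserving and Scott continuous at $0_I$.

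First, separately preserving. The needed facts should be the product-case characterization in \cite{FBRL25a}, or a direct argument:
\begin{itemize}
\item \emph{Lax morphism.} Take $X_i$ to be the quantale $[0,+\infty]$ with the quasi-metric $h$ truncated appropriately, or two- and three-point quasi-metric spaces in each coordinate. This should show that $F$ is isotone, satisfies the lax triangle condition, and has $F(0_I)=0$.
\item \emph{Fibre condition.} We need $F(u)=0\Rightarrow u=0_I$. If $u\neq0_I$ with $F(u)=0$, take two-point spaces $X_i=\{0,1\}$ with $d_i(0,1)=d_i(1,0)=u_i$ when $u_i>0$, and with a single point when $u_i=0$. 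Choose $x\neq y$ in the product with $d_\Pi(x,y)=d_\Pi(y,x)=u$. Then $F\circ d_\Pi$ is not separated, contradicting (2).
\item \emph{Conclusion.} An isotone $F$ with $F^{-1}(0)=\{0_I\}$ sends separated $\mathscr V$-categories to separated ones, just as in the last paragraph of the proof of Theorem \ref{thm:sep_sym_preserving}. In the symmetric case, use that theorem directly, with preservation of triangle triplets checked on three-point product spaces.
\end{itemize}

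Second, Scott continuity at $0_I$. I would imitate the proof of (1)$\Rightarrow$(2) in Theorem \ref{thm:us}, but with a separated space of product form. Take every $X_i=[0,+\infty)$ with the Euclidean metric (symmetric and separated), and the base point $x=(0)_{i\in I}$. Fix $\varepsilon>0$. Hypothesis (2) gives a finite $J\subseteq I$ and $\delta_j>0$ such that
\[
|y_j|<\delta_j \text{ for all } j\in J \ \Longrightarrow\ F\bigl((|y_i|)_{i\in I}\bigr)<\varepsilon .
\]
Given $u$ in the basic Scott neighbourhood $\{u:u_j<\delta_j,\ j\in J\}$, we want to pick $y$ with $|y_i|=u_i$. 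This works when every $u_i$ is finite, but not when $u_i=+\infty$. Handling $u_i=+\infty$ is the main obstacle.

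To resolve it, I would replace the Euclidean metric by the extended metric $e_\infty$ on $[0,+\infty]$, with $e_\infty(s,t)=|s-t|$ for finite $s,t$ and distance $+\infty$ between a finite point and $+\infty$. This is a separated, symmetric extended metric, and $e_\infty(0,t)=t$ for every $t$, so every $u\in[0,+\infty]^I$ arises as $d_\Pi(x,y)$ with $y=u$. Hence
\[
F\bigl(\{u:u_j<\delta_j,\ j\in J\}\bigr)\subseteq[0,\varepsilon),
\]
so $F$ is Scott continuous at $0_I$ by Example \ref{ex:left_order_topology}. Theorem \ref{thm:us}, in its separated (and symmetric) form, then yields (1).
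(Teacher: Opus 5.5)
Your proof is correct. The direction (2)$\Rightarrow$(1) follows a genuinely different route from the paper's.

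\textbf{The paper's route.} The paper never passes through Scott continuity in that direction. It first obtains separate preservation from \cite[Theorem 3.18]{FBRL25a} (resp.\ Theorem~\ref{thm:sep_sym_preserving}). It then takes an arbitrary separated $\mathsf P_+^{\,I}$-metric space $(X,a)$ and forms the Kolmogorov quotients $(X_i,\overline{a_i})$ of the coordinate quasi-pseudometrics. It uses the diagonal map $e:X\to\prod_{i\in I}X_i$, which satisfies $F\circ\overline{a_\Pi}(e(x),e(y))=F\circ a(x,y)$. Pulling back $\mathscr T(F\circ\overline{a_\Pi})\subseteq\prod\mathscr T(\overline{a_i})$ along $e$, and identifying the resulting initial topology with $\bigvee\mathscr T(a_i)=\mathscr T(a)$, gives the inclusion directly.

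\textbf{Your route.} You show instead that a single product already suffices as a test object. In fact $\bigl([0,+\infty]^I,(e_\infty)_\Pi\bigr)$ is exactly the symmetrized residuation space $(\mathsf P_+^{\,I},a)$ used in the proof of Theorem~\ref{thm:us}: the $i$th entry of $h(u,v)\ast h(v,u)$ is $e_\infty(u_i,v_i)$. So the paper's universal witness is already of product form, and hypothesis (2) applied to it gives Scott continuity at $0_I$. The $(2)\Rightarrow(1)$ half of Theorem~\ref{thm:us} then finishes. That argument is class-independent, so it also covers the simultaneously separated and symmetric setting, even though the theorem statement does not list that case explicitly.

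\textbf{Comparison.} Your route is shorter and avoids quotients and initial topologies. It also proves Corollary~\ref{cor:charac_upper_eqm} at the same time. The paper's embedding argument does not rely on the Scott-continuity characterization. It also makes explicit why the product case controls all separated $\mathsf P_+^{\,I}$-metric spaces.

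\textbf{Remaining remarks.} Your (1)$\Rightarrow$(2) checks directly that $d_\Pi$ is separated (and symmetric) and reads the aggregation property off the definition. This is a more elementary version of the paper's citations. Your lax-morphism step is only sketched, and you should complete it in one of two ways:
\begin{enumerate}
\item cite \cite[Theorem 3.18]{FBRL25a}, as the paper does; or
\item use three-point quasi-metric spaces with $d(x_i,y_i)=u_i$, $d(y_i,z_i)=v_i$, $d(x_i,z_i)=w_i$ and all reverse distances equal to $+\infty$. These give preservation of asymmetric triangle triplets, including isotonicity via $(u,0_I,v)$, after which \cite[Theorem 3.17]{FBRL25a} applies.
\end{enumerate}
In the metric case, the analogous three-point spaces need points identified when a coordinate distance vanishes. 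This is routine.
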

	
	\begin{proof}
		
		(1) $\Rightarrow$ (2) The proof is analogous to the corresponding implication in Proposition \ref{prop:upper_preserving_equal_upper_eqpm}. For the separately preserving case, we apply \cite[Theorem 3.18]{FBRL25a}, whereas for the separated and symmetric setting we use Theorem \ref{thm:sep_sym_preserving}. These results ensure that $F$ is, respectively, an extended quasi-metric aggregation function on products or an extended metric aggregation function on products.  The required inclusion of topologies follows exactly as in Proposition \ref{prop:upper_preserving_equal_upper_eqpm}.
		
		(2) $\Rightarrow$ (1)   In the quasi-metric case, by \cite[Theorem 3.18]{FBRL25a}, $F$ is separately preserving. In the metric case, by Theorem \ref{thm:sep_sym_preserving}, $F$ is separately and symmetrically preserving.
		
		Let $(X,a)$ be a separated $\mathsf{P}_+^{\,I}$-category (resp. a separated and symmetric $\mathsf{P}_+^{\,I}$-category). Given $i\in I$, each coordinate function $a_i$ of $a$ is an extended quasi-pseudometric on $X$ (resp. an extended pseudometric on $X$).
		
		For every $i\in I$, denote by $(X_i,\overline{a_i})$ the Kolmogorov quotient of $(X,a_i)$, that is, $X_i$ is the quotient set $X/{\sim_i}$, where $\sim_i$ is the following equivalence relation on $X$ 
		\[ x\sim_i y \quad\Longleftrightarrow\quad a_i(x,y)=a_i(y,x)=0, \]
		and $\overline{a_i}:X_i\times X_i\to [0,+\infty]$ is defined by
		\[\overline{a_i}\big([x]_i,[y]_i\big)=a_i(x,y)\qquad\text{ for every }x,y\in X. \]
		Then $\big\{(X_i,\overline{a_i}):i\in I\big\}$ is a family of extended quasi-metric spaces (resp. extended metric spaces). By assumption, $\left(\prod_{i\in I} X_i,F\circ \overline{a_\Pi}\right)$ is an extended quasi-metric space (resp. an extended metric space) and
		\[ \mathscr{T}(F\circ \overline{a_\Pi})\subseteq \prod \mathscr{T}(\overline{a_i}). \]
		
		Since $(X,a)$ is separated, one has
		\[\bigcap_{i\in I}\sim_i=\Delta_X,\]
		and therefore the diagonal mapping
		\[e:X\to \prod_{i\in I}X_i, \qquad e(x)=\big([x]_i\big)_{i\in I},\]
		is injective.
		
		Observe that for every $x,y\in X$,
		\[F\circ \overline{a_\Pi}\big(e(x),e(y)\big)=F\Big(\big(\overline{a_i}([x]_i,[y]_i)\big)_{i\in I}\Big)
		=F\bigl((a_i(x,y))_{i\in I}\bigr)=F\circ a(x,y).\]
		
		Consequently, the topology $\mathscr{T}(F\circ a)$ induced on $X$ is homeomorphic to the subspace topology induced by $\mathscr{T}(F\circ \overline{a_\Pi})$ on $e(X)$. Since 
		\[ \mathscr{T}(F\circ \overline{a_\Pi})\subseteq \prod \mathscr{T}(\overline{a_i}),\]
		it follows that
		\[\mathscr{T}(F\circ a)\subseteq e^{-1}\left(\prod \mathscr{T}(\overline{a_i})\right)\qquad\text{ on }X,\]
		where $e^{-1}\big(\prod \mathscr{T}(\overline{a_i})\big)$ denotes the initial topology on $X$ with respect to $e.$ For any $i\in I$, let $\pi_i:\prod_{j\in I}X_j\to X_i$ be the canonical projection. Since the product topology on $\prod_{i\in I}X_i$ is the initial topology with
		respect to the family $\{\pi_i:i\in I\}$, the initial topology induced by $e$ coincides with the initial topology induced by the family $\{\pi_i\circ e:i\in I\}=\{p_i:i\in I\}$, where $p_i:X\to X_i$ denotes the canonical quotient map.
		Hence, it is the smallest topology on $X$ making continuous all maps $\pi_i\circ e=p_i$, that is, the supremum of the family of initial topologies  $\{p_i^{-1}(\mathscr{T}(\overline{a_i})):i\in I\}.$ Therefore, 
		\[\mathscr{T}(F\circ a)\subseteq \bigvee p_i^{-1}(\mathscr{T}(\overline{a_i}))\qquad\text{ on }X.\]
		Moreover, since $\overline{a_i}(p_i(x),p_i(y))=a_i(x,y)$, the topology
		$p_i^{-1}(\mathscr{T}(\overline{a_i}))$ coincides with the topology induced by $a_i$ on $X$. Indeed,
		\[p_i^{-1}\bigl(B_{\overline a_i}([x]_i,\varepsilon)\bigr)
		=B_{a_i}(x,\varepsilon),\]
		and therefore
		\[p_i^{-1}(\mathscr T(\overline a_i))=\mathscr T(a_i).\]
		Then 
		\[\mathscr{T}(F\circ a)\subseteq \bigvee p_i^{-1}(\mathscr{T}(\overline{a_i}))=\bigvee \mathscr{T}(a_i)=\mathscr{T}(a)\qquad\text{ on }X,\]
		where in the last equality we have used Example \ref{ex:product}.(3). Therefore, $F$ is upper strongly separately preserving (resp. upper strongly separately and symmetrically preserving).
	\end{proof}

	\begin{corollary}[\mbox{cf. \cite[Theorem 3.8]{FBRL26a}}]    \label{cor:charac_upper_eqm}
		Let $F:[0,+\infty]^I\to  [0,+\infty]$ be an extended quasi-metric aggregation function on products (resp. an extended metric aggregation function on products). The following statements are equivalent:
		\begin{enumerate}[(1)]
			\item $F$ is an upper strongly extended quasi-metric aggregation function on products 
			(resp. an upper strongly extended metric aggregation function on products);
			\item $F$ is continuous at $0_I$, when $[0,+\infty]$ is endowed with the left-order topology.
		\end{enumerate}
		
		Moreover, in the quasi-metric case, the previous statements are also equivalent to:
		\begin{enumerate}[(3)]
			\item $F(0_I)=\bigvee \bigl\{F(u):u\ll_{\mathsf{P}_+^I}0_I\bigr\}.$
		\end{enumerate}
	\end{corollary}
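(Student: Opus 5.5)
The plan is to combine Proposition~\ref{prop:upper_seppreserving_equal_upper_eqm} with Theorem~\ref{thm:us}, after checking that the hypotheses line up. We are given an extended quasi-metric aggregation function on products (resp.\ an extended metric aggregation function on products). By \cite[Theorem 3.18]{FBRL25a} (resp.\ Theorem~\ref{thm:sep_sym_preserving}), $F$ is then separately preserving (resp.\ separately and symmetrically preserving) as a map $\mathsf P_+^{\,I}\to\mathsf P_+$. Proposition~\ref{prop:upper_seppreserving_equal_upper_eqm} identifies statement (1) with $F$ being upper strongly separately preserving (resp.\ upper strongly separately and symmetrically preserving).

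Next we apply Theorem~\ref{thm:us} in the separated setting. It states that upper strong preservation restricted to separated $\mathscr V$-metric spaces is equivalent to Scott continuity at $1_\V$. For the separated and symmetric class, we rerun its proof. In (1)$\Rightarrow$(2) the test space $(\mathscr V,a)$ with $a(u,v)=(u\searrow v)\ast(v\searrow u)$ is separated and symmetric, so it belongs to every class considered. The implication (2)$\Rightarrow$(1) works for an arbitrary $\mathscr V$-metric space. Hence the equivalence holds in the separated and symmetric class as well. Here $1_\V=0_I$, and by Example~\ref{ex:left_order_topology} together with the discussion after it, the Scott topology on $\mathsf P_+^{\,I}$ is the product of left-order topologies and the Scott topology on $\mathsf P_+$ is the left-order topology. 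Therefore Scott continuity at $0_I$ is exactly statement (2), which gives (1)$\Leftrightarrow$(2).

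For (3) in the quasi-metric case, we use the last part of Theorem~\ref{thm:us}. It requires $F$ to be isotone, and separately preserving functions are lax morphisms by \cite[Theorems 3.13, 3.17]{FBRL25a}, hence isotone. So (2)$\Leftrightarrow$(3) holds with the join computed in $\mathsf P_+$, that is, as an infimum in the usual order.

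The main obstacle is bookkeeping rather than new mathematics. We must make sure that Theorem~\ref{thm:us} really covers the separated and symmetric combination, which needs the test space above to lie in that class. We must also make sure that (3) is not claimed in the metric case, where isotonicity is not available. This is consistent with the statement restricting (3) to the quasi-metric case.
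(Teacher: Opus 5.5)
Your proposal is correct and is essentially the argument the paper intends: it gives no separate proof, treating the corollary as an immediate consequence of Proposition~\ref{prop:upper_seppreserving_equal_upper_eqm}, Theorem~\ref{thm:us} in the separated and the separated-and-symmetric settings, the identification of the Scott topologies on $\mathsf P_+^{\,I}$ and $\mathsf P_+$ with (products of) left-order topologies, and the isotone clause of Theorem~\ref{thm:us} for (3) in the quasi-metric case. Your explicit check that the test space $(\mathscr V,a)$ used in Theorem~\ref{thm:us} is separated and symmetric, so that the theorem also covers the combined setting, is a worthwhile detail that the paper leaves implicit.
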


	Although our general framework does not yield a characterization of upper strongly extended (quasi-)metric aggregation functions on sets, a result analogous to the previous one can also be obtained in this setting by a direct proof (see \cite{FBRL26a}). This is because, in the metric and quasi-metric cases, aggregation on products and aggregation on sets admit different characterizations \cite{MayorVale10,MayorVale19,MinyaVale19}.

	The first results concerning strongly metric aggregation functions were obtained by Bors\'ik and Dobo\v{s} \cite{BorsikDobos81b,Dobos98}. As in the nonextended metric setting, they considered functions
	\[F:[0,+\infty)^I\to [0,+\infty).\]
	Since $[0,+\infty)$ is not a complete lattice, this framework does not fit directly into our setting. As observed in \cite{FBRL25a}, whenever
	\[F:[0,+\infty)^I\to [0,+\infty)\]
	is a quasi-pseudometric aggregation function on products, the map
	$\overline{F}:\mathsf{P}_+^I\to \mathsf{P}_+$
	defined by
	\[\overline{F}(x)=
	\begin{cases}
		F(x)&\text{if }x_i\neq +\infty\text{ for all }i\in I,\\
		+\infty&\text{otherwise},
	\end{cases}
	\]
	is a preserving function. Analogous statements hold for quasi-metric, pseudometric, and metric aggregation functions on products, yielding separately preserving, symmetrically preserving, and separately and symmetrically preserving functions, respectively. 
	
	So one could expect that the upper strong property of $F$ could be deduced from the same property of $\overline{F}.$ We can prove the following:

	\begin{proposition}\label{prop:upper_Fbar_F}
		Let $F:[0,+\infty)^I\to[0,+\infty).$ Then the following assertions hold.
		\begin{enumerate}[(1)]
			\item If $\overline F$ is an upper strongly extended (quasi-)(pseudo)metric aggregation function on products, then $F$ is an upper strongly (quasi-)(pseudo)metric aggregation function on products.
			\item If $\overline F$ is an upper strongly extended (quasi-)(pseudo)metric aggregation function on sets, then $F$ is an upper strongly (quasi-)(pseudo)metric aggregation function on sets.
		\end{enumerate}
	\end{proposition}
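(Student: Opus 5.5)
The plan is to reduce both statements to the corresponding statements for $\overline F$ applied to finite-valued data. The key observation is that $\overline F$ and $F$ agree on $[0,+\infty)^I$. Moreover, a family of (non-extended) (quasi-)(pseudo)metric spaces is in particular a family of extended ones, and their product distance $d_\Pi$ (resp. $d_\Delta$) takes values in $[0,+\infty)^I$. Hence $\overline F\circ d_\Pi=F\circ d_\Pi$ and $\overline F\circ d_\Delta=F\circ d_\Delta$.

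For (1), fix a family $\big\{(X_i,d_i):i\in I\big\}$ of (quasi-)(pseudo)metric spaces. The first step is to check that $F$ is a (quasi-)(pseudo)metric aggregation function on products. By hypothesis, $\overline F\circ d_\Pi$ is an extended (quasi-)(pseudo)metric, and it equals $F\circ d_\Pi$, which is finite-valued because $F$ maps into $[0,+\infty)$. So $F\circ d_\Pi$ is a genuine (quasi-)(pseudo)metric, and the relevant axiom (separation in the quasi-metric and metric cases, symmetry in the pseudometric cases) is inherited from $\overline F\circ d_\Pi$.

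The second step is the topological inclusion. The topology of a finite-valued metric is the same whether it is regarded as an extended metric or not, since it is generated by the same open balls. Therefore
\[\mathscr T(F\circ d_\Pi)=\mathscr T(\overline F\circ d_\Pi)\subseteq\prod\mathscr T(d_i),\]
where the inclusion is exactly the upper strong property of $\overline F$.

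Part (2) is verbatim the same with $d_\Delta$ in place of $d_\Pi$ and $\bigvee\mathscr T(d_i)$ in place of the product topology. Here one uses that each $d_i$ is an extended (quasi-)(pseudo)metric on $X$, and that $\overline F\circ d_\Delta=F\circ d_\Delta$ takes only finite values.

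The only point needing care, which I expect to be the mild obstacle, is definitional. We must make sure that the non-extended notions of ``(quasi-)(pseudo)metric aggregation function on products/sets'' and of ``upper strongly'' are the natural restrictions of Definition~\ref{def:upper_s_qpmaf} to finite-valued spaces and $F:[0,+\infty)^I\to[0,+\infty)$. We also need to verify that no family outside the extended hypothesis is required. Once this is fixed, the argument is a direct restriction, and no appeal to Theorem~\ref{thm:us} is needed. The converse implications are not claimed: a failure of $\overline F$ could come from the values at points with infinite coordinates, which finite-valued families never reach.
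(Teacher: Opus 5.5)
Your proposal is correct and follows the paper's proof essentially verbatim. Both regard the finite-valued family as an extended one, note that $\overline F\circ d_\Pi=F\circ d_\Pi$ (resp. $\overline F\circ d_\Delta=F\circ d_\Delta$), and apply the upper strong property of $\overline F$; your explicit check that $F\circ d_\Pi$ is a genuine (quasi-)(pseudo)metric, which the paper leaves implicit, is handled correctly.
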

	
	\begin{proof}
		(1) Let $\big\{(X_i,d_i):i\in I\big\}$ be a family of \mbox{(quasi-)}(pseudo)metric spaces. Since each $d_i$ is also an extended (quasi-)(pseudo)metric and
		\[\overline F\circ d_\Pi (x,y)=F\circ d_\Pi (x,y)\]
		for all $x,y\in\prod_{i\in I}X_i$, the upper strongness of $\overline F$ immediately yields
		\[\mathscr T(F\circ d_\Pi)\subseteq\prod_{i\in I}\mathscr T(d_i).\]
		Thus $F$ is an upper strongly (quasi-)(pseudo)metric aggregation function on products.
		
		The proof of (2) is analogous.
	\end{proof}

	Nevertheless, the converse is not true as the next example shows:

	\begin{example}\label{ex:upper_extension_failure}
		Define $F:[0,+\infty)^{\mathbb N}\to[0,+\infty)$
		by
		\[F(u)=\sum_{n=1}^{\infty}2^{-n}\min\{u_n,1\}\qquad\text{ for all }u=(u_n)_{n\in\mathbb{N}}\in [0,+\infty)^{\mathbb N}.\]
		It is well known that $F$ is an upper strongly (quasi-)(pseudo)metric aggregation function on products (see, for instance, \cite[Theorem 4.2.2]{EngelkingBook}). In fact, if $\big\{(X_n,d_n):n\in\mathbb N\big\}$ is a family of \mbox{(quasi-)}(pseudo)metric spaces, then
		\[F\circ d_\Pi(x,y)=F\bigl((d_n(x_n,y_n))_{n\in\mathbb N}\bigr)=\sum_{n=1}^{\infty}2^{-n}\min\{d_n(x_n,y_n),1\},\quad x,y\in \prod_{n\in\mathbb N} X_n,\]
		is a (quasi-)(pseudo)metric on $\prod_{n\in\mathbb N}X_n$ and it induces the product topology.
		
		Let us check that $\overline F$ is not an upper strongly (quasi-)(pseudo)metric aggregation function on products. For each $n\in\mathbb N$, let $X_n=\{0,1\}$ and endow it with the extended metric
		\[d_n(s,t)=\begin{cases}
			0&\text{if }s=t,\\
			+\infty&\text{if }s\neq t.
		\end{cases}
		\]
		Let $0_{\mathbb N}=(0,0,\ldots)\in\prod_{n\in\mathbb N}X_n$. If $y\neq 0_{\mathbb N}$, then there exists $m\in\mathbb N$ such that $y_m\neq 0$, and hence
		\[d_m(0,y_m)=+\infty.\]
		Therefore,
		\[\overline F\big(d_\Pi(0_{\mathbb N},y)\big)=\overline F\bigl((d_n(0,y_n))_{n\in\mathbb N}\bigr)=+\infty.\]
		Consequently,
		\[B_{\overline F\circ d_\Pi}(0_{\mathbb N},1)=\{0_{\mathbb N}\}.\]
		However, $\{0_{\mathbb N}\}$ is not open in the product topology of $\prod_{n\in\mathbb N}X_n.$ Hence
		\[\mathscr T\big(\overline F\circ d_\Pi\big)\nsubseteq\prod_{n\in\mathbb N}\mathscr T(d_n).\]
		Thus $\overline F$ is not an upper strongly extended (quasi-)(pseudo)metric aggregation function on products, although $F$ is an upper strongly (quasi-)(pseudo)metric aggregation function on products.
		
	\end{example}


		
	

	Nevertheless, when $I$ is finite, the converse is true.
	
	\begin{proposition}\label{prop:upper_F_Fbar_finite}
		Let $F:[0,+\infty)^I\to[0,+\infty),$ where $I$ is finite. 
		Then the following assertions hold. 
		\begin{enumerate}[(1)] 
			\item $F$ is an upper strongly (quasi-)(pseudo)metric aggregation function on products if and only if $\overline F$ is an upper strongly extended (quasi-)(pseudo)metric aggregation function on products. 
			\item $F$ is an upper strongly (quasi-)(pseudo)metric aggregation function on sets if and only if $\overline F$ is an upper strongly extended (quasi-)(pseudo)metric aggregation function on sets. 
		\end{enumerate}
	\end{proposition}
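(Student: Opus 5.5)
The implication from $\overline F$ to $F$ is Proposition \ref{prop:upper_Fbar_F}, so only the converse needs proof. The strategy is to route everything through continuity at the origin. First I extract from the hypothesis on $F$ a continuity property of $F$ at $0_I$ for the left-order topology. Then I use finiteness of $I$ to transfer this continuity to $\overline F$. Finally I show directly that this continuity gives the required inclusion of topologies.

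\emph{Step 1: continuity of $F$ at $0_I$.} Suppose $F$ is an upper strongly (quasi-)(pseudo)metric aggregation function on products. Test the hypothesis on $X_i=[0,+\infty)$ with the Euclidean metric $e$ for every $i\in I$; this family is admissible in all four variants. The aggregated function $F\circ e_\Pi$ is a quasi-pseudometric, so $F(0_I)=0$. Fix $\varepsilon>0$. By hypothesis, $B_{F\circ e_\Pi}(0_I,\varepsilon)$ is a product neighbourhood of $0_I$. Since $I$ is finite, it contains a set $\{y:|y_i|<\delta\text{ for all } i\in I\}$. Because $e_\Pi(0_I,y)=y$ for $y\in[0,+\infty)^I$, this says exactly that $y_i<\delta$ for all $i$ implies $F(y)<\varepsilon$, i.e.\ $F$ is continuous at $0_I$ for the left-order topology.

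For the statement on sets, in the pseudometric cases I take $X=[0,+\infty)^I$ with $d_i(x,y)=|x_i-y_i|$. The supremum topology is Euclidean, and $d_\Delta(0,y)=y$, so the same argument works. In the (quasi-)metric case on sets the $d_i$ must be metrics. Here I would try $d_i^\eta(x,y)=|x_i-y_i|+\eta\max_j|x_j-y_j|$ and argue with the explicit radius. \textbf{This is the step I expect to be the main obstacle}, since $F$ need not be monotone in the metric setting and the perturbation changes the arguments of $F$. If it fails, I would instead use the known characterization of metric aggregation on sets to extract continuity at $0_I$.

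\emph{Step 2: transfer to $\overline F$ and conclude.} By the observation from \cite{FBRL25a} recalled above, $\overline F$ is an extended aggregation function of the corresponding type. Let $\{(X_i,d_i):i\in I\}$ be extended spaces, or a family $\{d_i\}$ on a common set, fix a point $x$ and $\varepsilon>0$, and choose $\delta$ from Step 1. If $d_i(x_i,y_i)<\delta$ for all $i$ (respectively $d_i(x,y)<\delta$), then all coordinates of $d_\Pi(x,y)$ are finite. Hence $\overline F(d_\Pi(x,y))=F(d_\Pi(x,y))<\varepsilon$. Since $I$ is finite, the set of such $y$ is a basic open neighbourhood of $x$ in $\prod\mathscr T(d_i)$ (respectively in $\bigvee\mathscr T(d_i)$). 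Therefore every $\mathscr T(\overline F\circ d_\Pi)$-ball around $x$ contains a basic neighbourhood of $x$, which gives the inclusion. Finiteness of $I$ is used precisely here; Example \ref{ex:upper_extension_failure} shows that this is where the argument breaks for infinite $I$.

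Alternatively, in the pseudometric and quasi-pseudometric cases one can conclude directly by Corollary \ref{cor:upper_strongly_ext_qpm} (3)$\Rightarrow$(1). In the metric and quasi-metric product cases one can use Corollary \ref{cor:charac_upper_eqm}. This applies because a left-order neighbourhood of $0_I$ in $[0,+\infty]^I$ of the form $\{u:u_i<\delta\}$ lies in $[0,+\infty)^I$, where $\overline F=F$.
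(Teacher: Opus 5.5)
Your argument is correct for products in all four variants, and for sets in the (quasi-)pseudometric variants. However, part (2) for (quasi-)metric aggregation on sets is a genuine gap, which you flag and leave open. In the metric variant it cannot be closed as planned, because the intermediate claim of Step 1 is false there. For metrics $d_i$ on a common set, every vector $d_\Delta(x,y)$ is either $0_I$ or has all coordinates positive. So the values of $F$ at vectors with some zero and some positive coordinates never play a role. For instance, with $I=\{1,2\}$, put $F(u)=\max\{u_1,u_2\}$ if $u_1u_2>0$ or $u=0_I$, and $F(u)=1$ otherwise. Then $F\circ d_\Delta=\max\{d_1,d_2\}$ for every pair of metrics on a set, and likewise $\overline F\circ d_\Delta=\max\{d_1,d_2\}$ for extended metrics. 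Hence $F$ and $\overline F$ are upper strongly (extended) metric aggregation functions on sets. Yet $F(0,t)=1$ for every $t>0$, so $F$ is not continuous at $0_I$. Consequently, continuity at $0_I$ cannot be obtained from the hypothesis in this case. This rules out your $d_i^\eta$ test, which only probes vectors in the cone $\min_i u_i\geq\frac{\eta}{1+\eta}\max_i u_i$ with $\delta$ depending on $\eta$. It equally rules out your fallback via a characterization of metric aggregation on sets.

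Within your strategy, the fix is to ask only for what Step 2 uses, namely control of $F$ on the vectors that can actually occur. For metrics it suffices that $F(u)\to 0$ as $u\to 0_I$ within $(0,+\infty)^I$. Suppose this fails for some $u^{(n)}\to 0_I$ in $(0,+\infty)^I$ with $F(u^{(n)})\geq\varepsilon$. Take the ultrametrics on $\{p\}\cup\{q_n:n\in\mathbb N\}$ given by $d_i(p,q_n)=u^{(n)}_i$ and $d_i(q_n,q_m)=\max\{u^{(n)}_i,u^{(m)}_i\}$. Then $q_n\to p$ in $\bigvee\mathscr T(d_i)$, while $B_{F\circ d_\Delta}(p,\varepsilon)=\{p\}$, a contradiction. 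For quasi-metrics the $d_i$ need only be separated, not symmetric, and full continuity at $0_I$ does hold. But you need an asymmetric test space, e.g.\ the same points with $d_i(p,q_n)=u^{(n)}_i$, where $u^{(n)}\in[0,1)^I$, and $d_i(q_n,p)=d_i(q_n,q_m)=1$.

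The paper avoids the issue altogether by never passing through continuity at $0_I$. Given $y\in B_{\overline F\circ d_\Pi}(x,\varepsilon)$, it truncates to $\rho_i=\min\{d_i,M_i\}$ with $M_i>d_i(x_i,y_i)$. It applies the hypothesis on $F$ to the family $\{\rho_i\}$ itself. It then intersects the resulting neighbourhood of $y$ with the finitely many balls $B_{d_i}(y_i,M_i-d_i(x_i,y_i))$, on which the truncation is invisible. Since the hypothesis is applied to the spaces at hand rather than to a fixed test space, this works verbatim in all variants, on products and on sets. Your route instead is tied to the continuity characterizations of Corollaries \ref{cor:upper_strongly_ext_qpm} and \ref{cor:charac_upper_eqm}. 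Those give more information where they apply, but they break down exactly in the metric-on-sets case.
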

	
	\begin{proof}
		(1) The implication from $\overline F$ to $F$ follows from Proposition \ref{prop:upper_Fbar_F}. 
		
		Conversely, assume that $F$ is an upper strongly (quasi-)(pseudo)metric aggregation function on products. A straightforward computation shows that $\overline F$ is an extended (quasi-)(pseudo)metric aggregation function on products.
		
		Let $\big\{(X_i,d_i):i\in I\big\}$ be a finite family of extended (quasi-)(pseudo)metric spaces. We prove that
		\[\mathscr T\big(\overline F\circ d_\Pi\big)\subseteq \prod_{i\in I}\mathscr T(d_i).\]
		Let $x\in\prod_{i\in I}X_i$ and let $\varepsilon>0$. 
		Consider $y\in B_{\overline F\circ d_\Pi}(x,\varepsilon)$. Then $\overline F\circ d_\Pi(x,y)<\varepsilon<+\infty$ so $d_i(x_i,y_i)<+\infty$
		for every $i\in I$. For each $i\in I$, choose $M_i>d_i(x_i,y_i)$ and define
		\[\rho_i=\min\{d_i,M_i\}.\]
		Then $\rho_i$ is a nonextended (quasi-)(pseudo)metric on $X_i.$  Moreover,
		\[\rho_i(x_i,y_i)=d_i(x_i,y_i)\]
		for every $i\in I$, and therefore
		\[F\circ\rho_\Pi(x,y)=\overline F\circ d_\Pi(x,y)<\varepsilon.\]
		
		Since $F$ is an upper strongly (quasi-)(pseudo)metric aggregation function on products, we have
		\[\mathscr T(F\circ\rho_\Pi)\subseteq \prod_{i\in I}\mathscr T(\rho_i).\]
		Thus there exists a neighbourhood $V$ of $y$ in $\prod_{i\in I}\mathscr T(\rho_i)$ such that
		\[V\subseteq B_{F\circ\rho_\Pi}(x,\varepsilon).\]
		Since each $\mathscr T(\rho_i)$ is contained in $\mathscr T(d_i)$, the set $V$ is also a neighbourhood of $y$ in $\prod_{i\in I}\mathscr T(d_i)$.
		Now put
		\[W=V\cap\bigcap_{i\in I}\pi_i^{-1} \bigl(B_{d_i}(y_i,M_i-d_i(x_i,y_i))\bigr).
		\]
		Since $I$ is finite, $W$ is a neighbourhood of $y$ in $\prod_{i\in I}\mathscr T(d_i)$.
		We claim that
		\[W\subseteq B_{\overline F\circ d_\Pi}(x,\varepsilon).\]
		Indeed, let $z\in W$. For every $i\in I$,
		\[d_i(x_i,z_i)\leq d_i(x_i,y_i)+d_i(y_i,z_i)<M_i.\]
		Hence
		\[\rho_i(x_i,z_i)=d_i(x_i,z_i)\]
		for every $i\in I$. Since $z\in V\subseteq B_{F\circ\rho_\Pi}(x,\varepsilon)$, we obtain
		\[\overline F\circ d_\Pi(x,z)=F\circ\rho_\Pi(x,z)<\varepsilon.\]
		Therefore $z\in B_{\overline F\circ d_\Pi}(x,\varepsilon)$, and so
		\[W\subseteq B_{\overline F\circ d_\Pi}(x,\varepsilon).\]
		
		This proves that every ball of $\overline F\circ d_\Pi$ is open in the product topology. Hence $\overline F$ is an upper strongly extended (quasi-)(pseudo)metric aggregation function on products.
		
		(2) The proof for aggregation functions on sets follows in an analogous way.
	\end{proof}

	\begin{corollary}[\mbox{cf.\cite[Theorems 3.8 and 4.15]{FBRL26a}}]
		Let $n\in\mathbb{N}$ and let $F:[0,+\infty)^n\to [0,+\infty)$ be a (quasi-)pseudometric aggregation function on products. The following statements are equivalent:
		\begin{enumerate}[(1)]
			\item $F$ is an upper strongly (quasi-)pseudometric aggregation function on products;
			\item $F$ is an upper strongly (quasi-)pseudometric aggregation function on sets;
			\item $F$ is continuous at $0_n,$ when $[0,+\infty)$ is endowed with the left-order topology.
		\end{enumerate}
	\end{corollary}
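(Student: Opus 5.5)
The plan is to reduce the finite, nonextended statement to the extended setting through the extension $\overline F$, and then read off the equivalences from Corollary~\ref{cor:upper_strongly_ext_qpm}.

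\emph{Step 1: $F$ versus $\overline F$.} Since $F$ is a (quasi-)pseudometric aggregation function on products, the observation from \cite{FBRL25a} recalled before Proposition~\ref{prop:upper_Fbar_F} shows that $\overline F:\mathsf P_+^{\,n}\to\mathsf P_+$ is preserving (resp. symmetrically preserving). By \cite[Lemma 3.7, Remark 3.9]{FBRL25a}, $\overline F$ is then an extended (quasi-)pseudometric aggregation function on products and on sets. Because $I=\{1,\dots,n\}$ is finite, Proposition~\ref{prop:upper_F_Fbar_finite} applies in both versions. It gives that (1) is equivalent to $\overline F$ being an upper strongly extended (quasi-)pseudometric aggregation function on products. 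It also gives that (2) is equivalent to the same property of $\overline F$ on sets.

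\emph{Step 2: apply the extended corollary.} By Corollary~\ref{cor:upper_strongly_ext_qpm}, the two upper strong properties of $\overline F$ (on products and on sets) are equivalent to each other. Both are also equivalent to the continuity of $\overline F$ at $0_n$, where $[0,+\infty]$ carries the left-order topology. It follows that (1) $\Leftrightarrow$ (2) $\Leftrightarrow$ continuity of $\overline F$ at $0_n$.

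\emph{Step 3: continuity of $\overline F$ versus $F$.} It remains to show that $\overline F$ is left-order continuous at $0_n$ exactly when $F$ is, with $[0,+\infty)$ carrying the left-order topology. Both maps take the value $F(0_n)$ at $0_n$. Since $n$ is finite, a neighbourhood base of $0_n$ in $[0,+\infty]^n$ is given by the sets $\prod_{i=1}^n[0,\delta)$ with $\delta>0$; these boxes lie inside $[0,+\infty)^n$, where $\overline F=F$. In $[0,+\infty)$ the left-order neighbourhoods of $F(0_n)$ are the sets $[0,\varepsilon)$ with $\varepsilon>F(0_n)$. In $[0,+\infty]$ the basic neighbourhoods are the same sets, together with the whole space $[0,+\infty]$, which is a trivial neighbourhood. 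So the two continuity conditions at $0_n$ coincide.

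\emph{Main obstacle.} The step I would check most carefully is Step 3, since it is the only place where finiteness of $n$ enters besides Proposition~\ref{prop:upper_F_Fbar_finite}. For infinite $I$, basic neighbourhoods of $0_I$ contain points with infinite coordinates, where $\overline F=+\infty$, and Example~\ref{ex:upper_extension_failure} shows that the reduction then breaks down. With $n$ finite this issue does not arise. The rest is bookkeeping of which earlier result applies in the quasi and the symmetric cases.
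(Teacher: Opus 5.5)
Your proposal is correct and follows the paper's proof essentially step for step. You reduce to $\overline F$ via Proposition~\ref{prop:upper_F_Fbar_finite} (for both products and sets), identify continuity of $F$ and $\overline F$ at $0_n$ because the finite boxes $\prod_{i=1}^n[0,\delta_i)$ form a neighbourhood base on which $\overline F=F$, and conclude with Corollary~\ref{cor:upper_strongly_ext_qpm}; your explicit check that $\overline F$ meets that corollary's hypothesis is left implicit in the paper.
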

	
	\begin{proof}
		By Proposition \ref{prop:upper_F_Fbar_finite}, since $n$ is finite, $F$ is an upper strongly \mbox{(quasi-)}\-pseudometric aggregation function on products if and only if $\overline F$ is an upper strongly extended (quasi-)pseudometric aggregation function on products. The same proposition gives the analogous equivalence for aggregation functions on sets.
		
		Moreover, again because $n$ is finite, $F$ is continuous at $0_n$, when $[0,+\infty)$ is endowed with the left-order topology, if and only if $\overline F$ is continuous at $0_n$, when $[0,+\infty]$ is endowed with the Scott topology. Indeed, a neighbourhood basis of $0_n$ in $[0,+\infty]^n$ is given by the sets of the form
		\[\prod_{i=1}^n[0,\delta_i),\qquad 0<\delta_i<+\infty\]
		All points of these neighbourhoods have finite coordinates.  Hence, on each of them, $\overline F$ coincides with $F$.
		
		The result now follows by applying the Corollary \ref{cor:upper_strongly_ext_qpm} to $\overline F$.
	\end{proof}
	
	In a similar way, but using Corollary \ref{cor:charac_upper_eqm}, we can prove the following.
	
	\begin{corollary}[\mbox{cf.\cite[Theorems 3.8 and 4.15]{FBRL26a}}]
		Let $n\in\mathbb{N}$ and let $F:[0,+\infty)^n\to [0,+\infty)$ be a (quasi-)metric aggregation function on products. The following statements are equivalent:
		\begin{enumerate}[(1)]
			\item $F$ is an upper strongly (quasi-)metric aggregation function on products;
			\item $F$ is continuous at $0_n,$ when $[0,+\infty)$ is endowed with the left-order topology.
		\end{enumerate}
	\end{corollary}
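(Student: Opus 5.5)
The plan mirrors the proof of the preceding corollary, with Corollary \ref{cor:charac_upper_eqm} used in place of Corollary \ref{cor:upper_strongly_ext_qpm}. The first step is to pass from $F$ to its extension $\overline F:\mathsf P_+^{\,n}\to\mathsf P_+$. As recalled before Proposition \ref{prop:upper_Fbar_F}, if $F$ is a (quasi-)metric aggregation function on products, then $\overline F$ is an extended (quasi-)metric aggregation function on products. This is the analogue of the observation from \cite{FBRL25a} for the separated (and symmetric) cases. If needed, I would check it directly: $\overline F$ is isotone with $\overline F(u)=+\infty$ as soon as some coordinate is infinite. Separation also transfers, since $\overline F(u)=0$ forces all coordinates of $u$ to be finite, so $\overline F(u)=F(u)=0$ and hence $u=0_n$.

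The second step applies Proposition \ref{prop:upper_F_Fbar_finite}(1). Its statement covers the (quasi-)metric case because $n$ is finite. It gives that $F$ is an upper strongly (quasi-)metric aggregation function on products if and only if $\overline F$ is an upper strongly extended (quasi-)metric aggregation function on products. The truncation $\rho_i=\min\{d_i,M_i\}$ used there preserves separation, so this case is indeed covered.

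The third step applies Corollary \ref{cor:charac_upper_eqm} to $\overline F$. This shows that $\overline F$ is upper strongly extended (quasi-)metric on products if and only if $\overline F$ is continuous at $0_n$ for the left-order topology on $[0,+\infty]$.

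The last step compares the continuity of $\overline F$ and $F$ at $0_n$. As in the previous corollary, $0_n$ has a neighbourhood basis $\prod_{i=1}^n[0,\delta_i)$ with $0<\delta_i<+\infty$, and these sets consist of points with finite coordinates, on which $\overline F=F$. Also, $F(0_n)=\overline F(0_n)=0$, and the left-order neighbourhoods $[0,\varepsilon)$ of $0$ in $[0,+\infty)$ and in $[0,+\infty]$ correspond. So the two continuity conditions coincide.

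The only delicate point is confirming that the hypotheses of Proposition \ref{prop:upper_F_Fbar_finite} and Corollary \ref{cor:charac_upper_eqm} are met in the separated setting, in particular that $\overline F$ is an extended (quasi-)metric aggregation function on products. I expect this to be routine.
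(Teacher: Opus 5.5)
Your proposal is correct and follows the paper's own route. You pass to $\overline F$, use Proposition~\ref{prop:upper_F_Fbar_finite}(1) to transfer upper strongness, apply Corollary~\ref{cor:charac_upper_eqm} to $\overline F$, and identify continuity of $F$ and $\overline F$ at $0_n$ through the finite-coordinate basis $\prod_{i=1}^n[0,\delta_i)$.

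One small correction concerns your optional direct check. In the metric case $\overline F$ need not be isotone, because metric aggregation functions need not be monotone. There you should instead verify that $\overline F$ preserves triangle triplets and satisfies $\overline F^{-1}(0)=\{0_n\}$, as in Theorem~\ref{thm:sep_sym_preserving}. In the quasi-metric case, isotonicity must be supplemented by subadditivity, that is, by preservation of asymmetric triangle triplets.
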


	\begin{remark}
		In the quasi-pseudometric and quasi-metric cases, the equivalence between the statements of the two previous corollaries and
		\[F(0_n)=\bigwedge \bigl\{F(u):0_n<u\bigr\}\]
		follows from our general framework. Indeed, in these cases the corresponding aggregation functions are, respectively, preserving and separately preserving, and hence isotone. Therefore the characterization in terms of Scott continuity yields the above condition.
		
		In the pseudometric and metric cases, however, this equivalence does not follow directly from our general framework, since symmetrically preserving functions need not be isotone. Nevertheless, it follows by applying \cite[Proposition 2, p. 52]{Dobos98}. Indeed, in each of these cases, $F$ preserves triangle triplets. Thus, whenever $a,b\in[0,+\infty)^n$ and $a\leq b$, the triplet $(a,b,b)$ is triangular. Hence
		\[
		F(a)\leq F(b)+F(b)=2F(b).
		\]
		Therefore, although $F$ need not be isotone, it satisfies a weak monotonicity property which is enough to obtain the equivalence in the finite nonextended setting.
	\end{remark}

	\subsection{Lower strongly preserving functions}
	
	We now turn to the characterization of lower strongly preserving functions.
	
	As in the upper case, preservation alone does not imply lower strong preservation. The following simple example illustrates this fact.

	\begin{example}
		
		Let $F:(\mathscr{V},\preceq_\V,\ast)\to(\mathscr{W},\preceq_\W,\star)$ be a function between two continuous quantales given by $F(v)=\top_\W$ for all $v\in\mathscr{V}$. Assume that $\mathscr{V}$ contains at least two different elements. Then $F$ is a lax morphism of quantales, so it is preserving by \cite[Theorem 3.13]{FBRL25a}.
		
		Now, let $X$ be a set with at least two elements, and consider the $\mathscr{V}$-metric space $(X,a)$ such that $a(x,y)=\bot_\V$ if $x\neq y$ and $a(x,y)=\top_\V$ if $x=y$, for every $x,y\in X.$
		Then $\mathscr{T}(a)$ is the discrete topology on $X$, whereas $\mathscr{T}(F\circ a)$ is the indiscrete topology so $\mathscr{T}(a)\nsubseteq \mathscr{T}(F\circ a).$ Hence $F$ is not (lower) strongly preserving.
	\end{example}
	
	To characterize lower strongly preserving functions, we first recall the following notions.  Given two nonempty sets $X$ and $Y$, and denoting by $2^Y$ the power set of $Y$, a multifunction from $X$ to $Y$, denoted by $h:X\rightrightarrows Y,$ is a map $h:X\to 2^Y$ (see, for example, \cite{BeerBook}).  Moreover, if $X$ and $Y$ are topological spaces, then $h$ is \emph{upper semicontinuous} at $x\in X$ if for every open subset $G$ of $Y$ such that $h(x)\subseteq G,$ there exists an open neighborhood $O$ of $x$ such that $h(o)\subseteq G$ for every $o\in O.$ 
	
	If $X$ and $Y$ are complete lattices endowed with the Scott topology, we say that $h$ is \emph{Scott upper semicontinuous at $x\in X$} if it is upper semicontinuous at $x$ with respect to these topologies.
	
	The following theorem provides a complete characterization of lower strongly preserving functions in terms of the fibre of the top element and Scott upper semicontinuity.
	
	\begin{theorem}\label{thm:lowerstronglypreserving}
		Let $F:(\mathscr V,\preceq_\V,\ast)\to(\mathscr W,\preceq_\W,\star)$ be a preserving function between continuous quantales. Then $F$ is lower strongly preserving if and only if the following two conditions hold:
		\begin{enumerate}[(1)]
			\item $F^{-1}(1_\W)=\{1_\V\}$;
			
			\item the inverse multifunction
			\[F^{-1}:\mathscr W\rightrightarrows \mathscr V\]
			is Scott upper semicontinuous at $1_\W.$
		\end{enumerate}
		
		The same characterization holds for lower strongly separately preserving functions, for lower strongly symmetrically preserving functions, and for the simultaneous separately and symmetrically preserving case.
	\end{theorem}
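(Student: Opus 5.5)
The plan is to reduce both conditions to a single ``ball-level'' statement. Using (1), condition (2) should be equivalent to
\[
(\ast)\qquad \forall\,\delta\ll_\V 1_\V\ \ \exists\,\varepsilon\ll_\W 1_\W:\quad F(u)\in\twoheaduparrow\varepsilon\ \Longrightarrow\ u\in\twoheaduparrow\delta .
\]
Then I will show that lower strong preservation is equivalent to (1) together with $(\ast)$. I will use the same test space as in Theorem \ref{thm:us}: $(\mathscr V,a)$ with $a(u,v)=(u\searrow v)\ast(v\searrow u)$. It is separated and symmetric, so it belongs to every preservation setting mentioned at the end of the statement. Hence the argument covers all four cases at once, and only the existence of this test space depends on the setting. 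Throughout I use $F(1_\V)=1_\W$, which holds in every preservation setting by \cite{FBRL25a}.

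\textbf{Reduction of (2) to $(\ast)$.} The sets $\twoheaduparrow\varepsilon$ with $\varepsilon\ll_\W 1_\W$ form a neighbourhood base of $1_\W$ in the Scott topology, and $\twoheaduparrow\varepsilon\subseteq\mathscr W$ is open. Assuming (1), a Scott open $G\subseteq\mathscr V$ contains $F^{-1}(1_\W)$ exactly when $1_\V\in G$. In that case $G$ contains some basic open $\twoheaduparrow\delta$ with $\delta\ll_\V 1_\V$. So Scott upper semicontinuity of $F^{-1}$ at $1_\W$ amounts to requiring, for each such $\delta$, some $\varepsilon\ll_\W 1_\W$ with $F^{-1}(\twoheaduparrow\varepsilon)\subseteq\twoheaduparrow\delta$. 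This is precisely $(\ast)$. Conversely, $(\ast)$ gives (2) by the same reasoning, again using (1).

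\textbf{Sufficiency.} Assume (1) and (2), hence $(\ast)$. Let $(X,a)$ be a $\mathscr V$-metric space in the class considered, let $G\in\mathscr T(a)$ and $x\in G$. Choose $\delta\ll_\V 1_\V$ with $B_a(x,\delta)\subseteq G$, and then $\varepsilon$ as in $(\ast)$. If $y\in B_{F\circ a}(x,\varepsilon)$, then $\varepsilon\ll_\W F(a(x,y))$, so $\delta\ll_\V a(x,y)$ by $(\ast)$. Therefore $B_{F\circ a}(x,\varepsilon)\subseteq B_a(x,\delta)\subseteq G$, and so $G\in\mathscr T(F\circ a)$.

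\textbf{Necessity.} Take the test space $(\mathscr V,a)$, for which $a(1_\V,\gamma)=\gamma$ as computed in the proof of Theorem \ref{thm:us}. Thus $B_a(1_\V,\delta)=\twoheaduparrow\delta$ and $B_{F\circ a}(1_\V,\varepsilon)=F^{-1}(\twoheaduparrow\varepsilon)$. For $\delta\ll_\V 1_\V$, the ball $\twoheaduparrow\delta$ is $\mathscr T(a)$-open and contains $1_\V$. By lower strong preservation it is $\mathscr T(F\circ a)$-open, so some $\varepsilon\ll_\W 1_\W$ satisfies $F^{-1}(\twoheaduparrow\varepsilon)\subseteq\twoheaduparrow\delta$; this is $(\ast)$.

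The step I expect to need the most care is extracting (1) from $(\ast)$, since (1) is needed before $(\ast)$ can be turned back into (2). Suppose $F(u)=1_\W$. Then $F(u)\in\twoheaduparrow\varepsilon$ for every $\varepsilon\ll_\W 1_\W$, so $(\ast)$ gives $\delta\ll_\V u$, hence $\delta\preceq_\V u$, for every $\delta\ll_\V 1_\V$. Continuity of $\mathscr V$ then yields $1_\V=\bigvee\twoheaddownarrow 1_\V\preceq_\V u$, so $u=1_\V$. This proves (1), and the reduction above then turns $(\ast)$ into (2).
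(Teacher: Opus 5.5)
Your proof is correct and follows the paper's argument essentially step for step. The sufficiency direction is the same, and so is the derivation of Scott upper semicontinuity from the ball inclusion $B_{F\circ a}(1_\V,\varepsilon)\subseteq B_a(1_\V,\delta)=\twoheaduparrow\delta$ on the symmetrized residuation space $(\mathscr V,a)$. The only difference is how you get $F^{-1}(1_\W)=\{1_\V\}$: the paper uses a separate two-point space with $a(x_1,x_2)=u$, whereas you extract it from $(\ast)$ on the same test space, where $a(1_\V,u)=u$ lets the point $u$ play the role of $x_2$. This is a slightly more economical way of organizing the same idea, relying on the same use of $1_\V=\bigvee\twoheaddownarrow 1_\V$.
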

	
	\begin{proof}
		Suppose that $F$ is lower strongly preserving. We first prove (1).
		
		Suppose, by contradiction, that there exists $u\in\mathscr V\setminus\{1_\V\}$ such that $F(u)=1_\W.$ Since $\mathscr V$ is continuous, then 
		\[1_\V=\bigvee \{v\in\mathscr{V}: v\ll_\V 1_\V\}.\]
		Hence, we can find $\varepsilon\ll_\V 1_\V$ such that $\varepsilon\not\preceq_\V u.$ Consider a set $X=\{x_1,x_2\}$ with two different points and $a:X\times X\to \mathscr{V}$ given by
		\[a(x,y)=\begin{cases}
			u&\text{ if }x\neq y,\\
			1_\V & \text{ if }x=y.
		\end{cases}\]
		Since the quantale is integral, $u\ast u\preceq u$, and hence $(X,a)$ is a $\mathscr V$-metric space (in fact, it is also separated and symmetric).
		Since $F$ is lower strongly preserving, we can find $\delta\ll_\W 1_\W$ such that
		\[B_{F\circ a}(x_1,\delta)\subseteq B_a(x_1,\varepsilon).\]
		Nevertheless, $\delta\ll_\W 1_\W=F(u)=F(a(x_1,x_2))$ so $x_2\in B_{F\circ a}(x_1,\delta)\subseteq B_a(x_1,\varepsilon).$ This implies $\varepsilon\ll_\V a(x_1,x_2)=u,$ and hence $\varepsilon\preceq_\V u,$ contradicting the choice of $\varepsilon.$ This proves (1).
		
		To prove (2), consider the separated and symmetric $\mathscr{V}$-metric space $
		(\mathscr V,a)$ obtained by symmetrizing the residuation $h$ on $\mathscr{V}$ (Example \ref{ex:vcat}), namely,
		\[a(u,v)=h(u,v)\ast h(v,u)\qquad \text{ for every }u,v\in\mathscr{V}.\]
		Then $\operatorname{Im}a(1_\V,\cdot)=\mathscr V.$ Indeed, for every $v\in\mathscr V$,
		\[a(1_\V,v)=h(1_\V,v)\ast h(v,1_\V)=v\ast 1_\V=v.\] 
		
		Let $G$ be a Scott-open subset of $\mathscr V$ containing  $F^{-1}(1_\W)=\{1_\V\}.$ Then there exists $\varepsilon\ll_\V 1_\V$ such that
		$\twoheaduparrow\varepsilon\subseteq G.$ By assumption $\mathscr{T}(a)\subseteq \mathscr{T}(F\circ a)$ so we can find $\delta\in\mathscr{W}$ with $\delta\ll_\W 1_\W$ such that
		\[B_{F\circ a}(1_\V,\delta)\subseteq B_a(1_\V,\varepsilon).\]
		Then $\twoheaduparrow\delta$ is a Scott-open neighbourhood of $1_\W$. We claim that this neighbourhood satisfies the required condition. Let $\eta\in \twoheaduparrow\delta $ and $\nu\in F^{-1}(\eta).$ Then $F(a(1_\V,\nu))=F(\nu)=\eta,$  so $\nu\in B_{F\circ a}(1_\V,\delta)\subseteq B_a(1_\V,\varepsilon).$ Hence $\varepsilon\ll_\V a(1_\V,\nu)=\nu. $ This means that
		\[F^{-1}(\eta)\subseteq \twoheaduparrow \varepsilon\subseteq G,
		\]
		which proves (2).
		
		Conversely, suppose that (1) and (2) hold. Let $(X,a)$ be a $\mathscr{V}$-metric space, $x\in X$ and $\varepsilon\in\mathscr{V}$ such that $\varepsilon\ll_\V 1_\V.$ Then $F^{-1}(1_\W)=\{1_\V\}\subseteq \twoheaduparrow\varepsilon.$ By the Scott upper semicontinuity of $F^{-1}$ at $1_\W$, we can find $\delta\ll_\W 1_\W$ such that if $\nu\in \twoheaduparrow \delta$ then  $F^{-1}(\nu)\subseteq \twoheaduparrow \varepsilon.$
		We claim that  
		\[B_{F\circ a}(x,\delta)\subseteq B_a(x,\varepsilon).\] 
		If $y\in B_{F\circ a}(x,\delta)$, then $\delta\ll_\W F(a(x,y))$, that is,
		\[F(a(x,y))\in \twoheaduparrow\delta.\]
		Hence 
		\[a(x,y)\in F^{-1}\Big(F\big(a(x,y)\big)\Big)\subseteq \twoheaduparrow \varepsilon\] 
		so $y\in B_a(x,\varepsilon).$ Thus every basic $\mathscr T(a)$-neighbourhood of $x$ contains a $\mathscr T(F\circ a)$-neighbourhood of $x$, and hence $\mathscr T(a)\subseteq\mathscr T(F\circ a)$. Consequently, $F$ is lower strongly preserving.
		
		The separated case, the symmetric case, and the simultaneous separated and symmetric case are obtained by restricting accordingly the class of $\mathscr V$-metric spaces under consideration.
	\end{proof}
	
	Separately preserving functions satisfy $F^{-1}(1_\W)=\{1_\V\}$ by \cite[Theorem 3.17]{FBRL25a}.  Therefore, in this setting, condition (1) in the preceding theorem is automatically fulfilled and can be omitted. This yields the following result.
	
	\begin{theorem}\label{thm:lowerstronglyseparatelypreserving}
		Let $F:(\mathscr{V},\preceq_\V,\ast)\to(\mathscr{W},\preceq_\W,\star)$ be a separately preserving function between continuous quantales. Then $F$ is lower strongly separately preserving if and only if the inverse multifunction 
		\[F^{-1}:\mathscr W\rightrightarrows \mathscr V\]
		is Scott upper semicontinuous at $1_\W$.
		
		The result remains valid for functions that are both separately and symmetrically preserving.
	\end{theorem}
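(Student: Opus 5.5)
The plan is to derive this result directly from Theorem \ref{thm:lowerstronglypreserving}, restricted to separated $\mathscr V$-metric spaces. That theorem already states that its characterization also holds in the separately preserving setting. There, lower strong separate preservation is equivalent to the conjunction of (1) $F^{-1}(1_\W)=\{1_\V\}$ and (2) Scott upper semicontinuity of $F^{-1}$ at $1_\W$. So it suffices to show that (1) holds automatically for every separately preserving $F$.

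The key point is that the test spaces in the proof of Theorem \ref{thm:lowerstronglypreserving} are all separated. The two-point space $(\{x_1,x_2\},a)$ with $a(x_1,x_2)=a(x_2,x_1)=u\neq 1_\V$ is separated, and so is the symmetrized residuation space $(\mathscr V,a)$ with $a(u,v)=(u\searrow v)\ast(v\searrow u)$. For the latter, if $1_\V\preceq a(u,v)$, integrality gives $1_\V\preceq u\searrow v$ and $1_\V\preceq v\searrow u$, hence $u\preceq v\preceq u$. The argument therefore runs verbatim inside the class of separated $\mathscr V$-metric spaces, and the same holds for separated and symmetric ones, since both test spaces are also symmetric.

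For condition (1), I would quote \cite[Theorem 3.17]{FBRL25a}: every separately preserving function satisfies $F^{-1}(1_\W)=\{1_\V\}$. For a self-contained argument, I would use the same two-point space $X=\{x,y\}$ with $a(x,y)=a(y,x)=u$, which is a separated (and symmetric) $\mathscr V$-category. The one-point space first gives $1_\W\preceq F(1_\V)$. If $F(u)=1_\W$ for some $u\neq 1_\V$, then $(X,F\circ a)$ would fail to be separated, a contradiction. This is exactly the argument used in the proof of Theorem \ref{thm:sep_sym_preserving}.

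Combining these steps, the forward direction follows from Theorem \ref{thm:lowerstronglypreserving}(2). For the converse, Scott upper semicontinuity together with the automatic fibre condition gives lower strong separate preservation, again by Theorem \ref{thm:lowerstronglypreserving}. The separately and symmetrically preserving case is identical. The only care needed is that the fibre condition also follows in that setting, either by Theorem \ref{thm:sep_sym_preserving} or by the symmetric two-point space above. I do not expect a real obstacle. The only step to check is that the test spaces in the earlier proof belong to the restricted classes, and that the converse construction $B_{F\circ a}(x,\delta)\subseteq B_a(x,\varepsilon)$ does not depend on which class $(X,a)$ lies in.
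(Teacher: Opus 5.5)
Your proposal is correct and follows the paper's route. The paper also obtains this result by specializing Theorem \ref{thm:lowerstronglypreserving} to the separated (respectively separated and symmetric) setting, and by noting via \cite[Theorem 3.17]{FBRL25a} that the fibre condition $F^{-1}(1_\W)=\{1_\V\}$ holds automatically for separately preserving functions. Your checks that both test spaces are separated (and symmetric), and your self-contained two-point argument for the fibre condition, make explicit what the paper leaves implicit.
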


	As in the upper case, we now consider the particular case of lower strongly separately preserving functions $F:\mathsf{P}_+^{\,I}\to\mathsf{P}_+$.
	An analogue of Proposition \ref{prop:upper_preserving_equal_upper_eqpm} also holds in this setting, with a proof obtained by a straightforward modification of the argument. Combining this observation with Theorem \ref{thm:lowerstronglyseparatelypreserving}, we obtain the following result.

	\begin{proposition}[\mbox{cf. \cite[Proposition 3.4]{FBRL26a}, \cite[Lemma 1, p. 53]{Dobos98}}]\label{prop:eqmag_implies_stronglyeqmag}
		Every extended (quasi-)metric aggregation function on products is a lower strongly extended (quasi-)metric aggregation function on products, that is, for every family $\big\{(X_i,d_i):i\in I\big\}$ of extended (quasi-)metric spaces, it holds that 
		$$\prod \mathscr{T}(d_i)\subseteq \mathscr{T}(F\circ d_\Pi).$$
		
	\end{proposition}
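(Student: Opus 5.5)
The plan is to reduce the statement to Theorem~\ref{thm:lowerstronglyseparatelypreserving}, which requires two ingredients. First, I identify $F$ as a separately preserving function $\mathsf P_+^{\,I}\to\mathsf P_+$. Second, I verify Scott upper semicontinuity of $F^{-1}$ at $0$, which turns out to follow from isotonicity and the condition on the fibre of $0$.

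\emph{Identification.} In the quasi-metric case, \cite[Theorem 3.18]{FBRL25a} says that an extended quasi-metric aggregation function on products is exactly a separately preserving map $F:\mathsf P_+^{\,I}\to\mathsf P_+$. In the metric case, Theorem~\ref{thm:sep_sym_preserving} shows that $F$ is separately and symmetrically preserving. In the quasi-metric case, $F$ is moreover a lax morphism, hence isotone with respect to $\leq^{\mathrm{op}}$, and by \cite[Theorem 3.17]{FBRL25a} it satisfies $F^{-1}(0)=\{0_I\}$. In the metric case the same fibre condition comes from Theorem~\ref{thm:sep_sym_preserving}. For monotonicity in the metric case I would use the weak version from the Remark above: whenever $a\le b$ coordinatewise, the triplet $(a,b,b)$ is a triangle triplet, so $F(a)\le 2F(b)$.

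\emph{Upper semicontinuity.} Using the neighbourhood basis of $0_I$ in the Scott topology of $\mathsf P_+^{\,I}$ described after Example~\ref{ex:left_order_topology}, I must prove the following. For every finite $J\subseteq I$ and every choice of $\varepsilon_j>0$ with $j\in J$, there is $\delta>0$ such that $F(u)<\delta$ implies $u_j<\varepsilon_j$ for all $j\in J$.

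For each $j\in J$, let $e^{(j)}\in[0,+\infty]^I$ have $j$-th coordinate $\varepsilon_j$ and all other coordinates $0$. Since $e^{(j)}\neq 0_I$, the fibre condition gives $F(e^{(j)})>0$. Put
\[
\delta=\min_{j\in J}F(e^{(j)})
\]
in the quasi-metric case, and half of this minimum in the metric case. This is positive because $J$ is finite.

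Now suppose $u_j\ge\varepsilon_j$ for some $j\in J$. Then $u\ge e^{(j)}$ coordinatewise, so isotonicity gives $F(u)\ge F(e^{(j)})\ge\delta$. In the metric case, the weak monotonicity gives $F(u)\ge F(e^{(j)})/2\ge\delta$ instead. This is the contrapositive of the required implication. Theorem~\ref{thm:lowerstronglyseparatelypreserving} then shows that $F$ is lower strongly separately preserving, or lower strongly separately and symmetrically preserving in the metric case.

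\emph{Transfer to products.} Let $\{(X_i,d_i):i\in I\}$ be a family of extended (quasi-)metric spaces. Then $(\prod_{i\in I} X_i,d_\Pi)$ is a separated (and, in the metric case, symmetric) $\mathsf P_+^{\,I}$-metric space. By Example~\ref{ex:product}(4), $\mathscr T(d_\Pi)=\prod\mathscr T(d_i)$. The lower strong property therefore gives
\[
\prod\mathscr T(d_i)\subseteq\mathscr T(F\circ d_\Pi),
\]
which is the claim.

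The only point needing care is the metric case, where $F$ need not be isotone. This is handled by the factor $2$ coming from triangle-triplet preservation, with $F(u)\ge F(e^{(j)})/2$ replacing isotonicity in the same argument.
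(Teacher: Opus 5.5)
Your proof is correct and follows essentially the paper's argument. You reduce to Theorem~\ref{thm:lowerstronglyseparatelypreserving}, use $F^{-1}(0)=\{0_I\}$ to get $F(e^j)>0$ for the finitely many test vectors $e^j$, and take $\delta$ below (half of) $\min_{j\in J}F(e^j)$ so that $F(u)<\delta$ forces $u_j<\varepsilon_j$ for all $j\in J$. The only differences are cosmetic: the paper uses the triangle-triplet bound $F(e^j)\le 2F(u)$ in both cases, whereas you use isotonicity in the quasi-metric case and keep the factor $2$ for the metric case; you also spell out the final transfer to products via Example~\ref{ex:product}(4), which the paper leaves implicit.
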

	
	\begin{proof}
		If $F$ is an extended quasi-metric aggregation function on products, then it is a separately preserving function \cite[Theorem 3.18]{FBRL25a}. By Theorem \ref{thm:lowerstronglyseparatelypreserving}, it suffices to show that $F^{-1}$ is Scott upper semicontinuous at $0.$  
		
		By \cite[Theorem 3.18]{FBRL25a}, $F^{-1}(0)=\{0_I\},$ so let $G$ be a Scott-open neighbourhood of $0_I$ in  $\mathsf P_+^{\,I}$. Then there exists $\varepsilon\ll^{\mathrm{op}} 0_I$ such that
		$\twoheaduparrow\varepsilon\subseteq G.$ By the description of the way-below relation in $\mathsf P_+^{\,I}$, there exists a finite subset $J\subseteq I$ such that
		\[\twoheaduparrow\varepsilon=\left\{u\in [0,+\infty]^I: u_j<\varepsilon_j \text{ for every }j\in J\right\}.\]
		If $J=\varnothing$, then $\twoheaduparrow\varepsilon=[0,+\infty]^I$, and the conclusion follows trivially. Hence, we may assume that $J\neq\varnothing$. Notice that $0<\varepsilon_j<+\infty$ for every $j\in J$.

		For every $j\in J$, define $e^j\in[0,+\infty]^I$ by
		\[e^j_i=
		\begin{cases}
			\varepsilon_j&\text{if }i=j,\\
			0&\text{if }i\neq j.
		\end{cases}\]
		Since $e^j\neq 0_I$ and $F^{-1}(0)=\{0_I\}$, we have $F(e^j)>0$
		for every $j\in J$. Since $J$ is finite, we can choose $\delta>0$ such that
		\[2\delta < F(e^j)\qquad\text{for every }j\in J.\]
		
		We claim that
		\[F^{-1}([0,\delta))\subseteq \twoheaduparrow\varepsilon.\]
		Indeed, let $u\in [0,+\infty]^I$ be such that $F(u)<\delta.$ Suppose, by contradiction, that $u\notin \twoheaduparrow\varepsilon$. Then there exists $j\in J$ such that $u_j\geq \varepsilon_j.$ Hence $e^j\leq u$ coordinatewise. Therefore $(e^j,u,u)$ is a triangle triplet in $\mathsf P_+^{\,I}.$ By \cite[Theorem 3.18]{FBRL25a}, $F$ preserves triangle triplets, so $(F(e^j),F(u),F(u))$
		is a triangle triplet in $\mathsf P_+$. In particular,
		\[F(e^j)\leq F(u)+F(u)<2\delta,\]
		which contradicts the choice of $\delta$. Thus $u\in \twoheaduparrow\varepsilon.$
		Therefore
		\[F^{-1}([0,\delta))\subseteq \twoheaduparrow\varepsilon\subseteq G.\]
		It follows that the inverse multifunction 
		\[F^{-1}:\mathsf P_+\rightrightarrows \mathsf P_+^{\,I}\]
		is Scott upper semicontinuous at $0$. By Theorem \ref{thm:lowerstronglyseparatelypreserving}, $F$ is a lower strongly separately preserving function, so a lower strongly extended quasi-metric aggregation function on products. 
		
		The metric case follows from a similar proof. 
	\end{proof}
	
	The above result suggests the natural question whether this holds in an arbitrary setting, that is, whether every separately preserving function $F:\mathscr V\to \mathscr W$ is a lower strongly separately preserving function. By Theorem \ref{thm:lowerstronglyseparatelypreserving} this means that $F^{-1}$ should be Scott upper semicontinuous at $1_\W.$ However, this is not true as the following example shows.
	
	\begin{example}
		Let $X=\{0\}\cup\{1/n:n\in\mathbb N\}$ be endowed with the topology inherited from the real line. Consider the complete lattice $\mathscr V=\mathcal O(X)$
		of open subsets of $X$, ordered by inclusion. Then $(\mathscr V,\subseteq,\cap)$ is a commutative integral continuous quantale whose unit is $1_\mathscr V=X$ \cite[Examples I-1.7]{GierzHoffKeiLawMisScott}.
		
		Consider also the continuous quantale $\mathscr W=([0,1],\leq,\wedge).$
		
		We next define $g:X\to(0,1]$ as
		\[g(0)=1, \qquad g\big(\tfrac{1}{n}\big)=\tfrac{1}{n} \quad\text{ for all }n\in\mathbb N,\]
		and define
		\[F:\mathcal O(X)\to[0,1]\]
		by
		\[F(U)=1-\sup\{g(x):x\in X\setminus U\},\quad\text{ for all }U\in\mathcal O(X)\]
		with the convention that $\sup\varnothing=0$.
		
		First observe that $F(X)=1.$ Moreover, if $U\neq X$, then $X\setminus U\neq\varnothing$, and therefore $\sup\{g(x):x\in X\setminus U\}>0.$ Hence
		$F(U)<1.$
		Consequently,
		\[F^{-1}(1)=\{X\}=\{1_\mathscr V\}.\]
		
		We next show that $F$ preserves asymmetric triangle triplets. Indeed, for every $U,V\in\mathcal O(X)$, one has
		\[\begin{aligned}
			F(U\cap V)
			&=1-\sup\{g(x):x\in X\setminus(U\cap V)\}\\
			&=1-\sup\{g(x):x\in (X\setminus U)\cup (X\setminus V)\}\\
			&=F(U)\wedge F(V).
		\end{aligned}\]
		Moreover, it is clear that $F$ preserves the order. Thus, if $U\cap V\subseteq W,$ then 
		\[F(U)\wedge F(V)= F(U\cap V) \leq F(W).\]
		Therefore $F$ preserves asymmetric triangle triplets. By \cite[Theorem 3.17]{FBRL25a}, $F$ is separately preserving.
		
		However, the inverse multifunction
		\[
		F^{-1}:[0,1]\rightrightarrows\mathcal O(X)
		\]
		is not Scott upper semicontinuous at $1$. Let us show this. Since $X$ is compact, the set $\{X\}$ is Scott-open in $\mathcal O(X)$. Moreover, $F^{-1}(1)=\{X\}\subseteq\{X\}.$
		
		Suppose, by contradiction, that $F^{-1}$ were Scott upper semicontinuous at $1$. Then there would exist a Scott-open neighbourhood $H$ of $1$ in $[0,1]$ such that
		\[
		F^{-1}(t)\subseteq\{X\}
		\]
		for every $t\in H$.
		
		Every Scott-open neighbourhood of $1$ in $[0,1]$ contains an interval of the form
		$(\alpha,1]$ for some $\alpha<1$. Choose $n\in\mathbb N$ such that
		$\alpha<1-\tfrac{1}{n}.$
		Let
		\[U_n=X\setminus\{\tfrac{1}{n}\}.\]
		Then $U_n$ is open in $X$ and $U_n\neq X$. Moreover,
		\[F(U_n)=1-\tfrac{1}{n}>\alpha.\]
		Hence
		\[F(U_n)\in H.\]
		However,
		\[U_n\in F^{-1}\left(1-\tfrac{1}{n}\right)\qquad\text{and}\qquad U_n\neq X,\]
		which contradicts
		\[F^{-1}(t)\subseteq\{X\}\quad\text{for every }t\in H.\]
		Therefore $F^{-1}$ is not Scott upper semicontinuous at $1$.
	\end{example}
	
	Combining Theorem \ref{thm:lowerstronglypreserving} with the
	argument used in Proposition \ref{prop:eqmag_implies_stronglyeqmag}, we obtain the following characterization.
	
	\begin{proposition}[\mbox{cf.\cite[Theorem 3.7]{FBRL26a}}]\label{prop:lseqpm}
		Let $F:[0,+\infty]^I\to [0,+\infty]$ be an extended (quasi-)pseudometric aggregation function on products. The following statements are equivalent:
		\begin{enumerate}[(1)]
			\item $F$ is a lower strongly extended (quasi-)pseudometric aggregation function on products;
			\item $F$ is a lower strongly extended (quasi-)pseudometric aggregation function on sets;
			\item $F^{-1}(0)=\{0_I\}.$
		\end{enumerate}
	\end{proposition}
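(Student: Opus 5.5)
The plan is to reduce everything to the quantale-level characterization of Theorem \ref{thm:lowerstronglypreserving}. First I will check that this theorem applies. By \cite[Lemma 3.7, Remark 3.9]{FBRL25a}, an extended (quasi-)pseudometric aggregation function on products is the same as a preserving (resp. symmetrically preserving) map $F:\mathsf P_+^{\,I}\to\mathsf P_+$. The same references identify it with an aggregation function on sets. Next I will transfer the lower inclusion between the quantale setting and the product and set settings. I will argue as in Proposition \ref{prop:upper_preserving_equal_upper_eqpm}, reversing the inclusions, and use the facts $\mathscr T(d_\Pi)=\prod\mathscr T(d_i)$ and $\mathscr T(a)=\bigvee\mathscr T(\pi_i\circ a)$ from Example \ref{ex:product}. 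This shows that (1), (2) and ``$F$ is lower strongly (symmetrically) preserving'' are mutually equivalent. For (1)$\Rightarrow$(2) I will use the homeomorphism of $\bigl(X,\bigvee\mathscr T(d_i)\bigr)$ with the diagonal of the product. For (2)$\Rightarrow$ lower strongly preserving I will take $d_i=\pi_i\circ a$, so that $d_\Delta=a$.

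Given this reduction, Theorem \ref{thm:lowerstronglypreserving} says that lower strong preservation is equivalent to two conditions: $F^{-1}(0)=\{0_I\}$, and Scott upper semicontinuity of $F^{-1}$ at $0$. The implication to (3) is therefore immediate. The real content is the converse: under (3), the upper semicontinuity comes for free. For this I will reuse the argument of Proposition \ref{prop:eqmag_implies_stronglyeqmag} verbatim. Take a Scott-open $G\ni 0_I$, and choose $\varepsilon\ll^{\mathrm{op}}0_I$ with finite support $J$ such that $\twoheaduparrow\varepsilon\subseteq G$. For $j\in J$, let $e^j$ be the vector with $\varepsilon_j$ in coordinate $j$ and $0$ elsewhere. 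Since $e^j\neq 0_I$, hypothesis (3) gives $F(e^j)>0$. Because $J$ is finite, I can pick $\delta>0$ with $2\delta<\min_{j\in J}F(e^j)$. I then claim that $F(u)<\delta$ forces $u_j<\varepsilon_j$ for all $j\in J$.

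The one step that needs care is the inequality $F(e^j)\le 2F(u)$ whenever $e^j\le u$ coordinatewise. In the earlier proposition this came from preservation of triangle triplets in the separated setting, so I must re-derive it from what is available here. In the quasi-pseudometric case, $F$ is a lax morphism and hence isotone, so $F(e^j)\le F(u)$ directly. In the pseudometric case, $F$ is symmetrically preserving, and by \cite[Theorem 3.21]{FBRL25a} it preserves triangle triplets. Since $(e^j,u,u)$ is a triangle triplet in $\mathsf P_+^{\,I}$ (because $e^j\le u+u$ and $u\le e^j+u$), we get $F(e^j)\le F(u)+F(u)$. In either case this contradicts $F(u)<\delta$ whenever $u_j\ge\varepsilon_j$. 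Hence $F^{-1}([0,\delta))\subseteq\twoheaduparrow\varepsilon\subseteq G$, which is the required upper semicontinuity at $0$. Theorem \ref{thm:lowerstronglypreserving} then yields lower strong preservation, and hence (1) and (2).

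I expect the main obstacle to be the bookkeeping in the symmetric case. There I must make sure that Theorem \ref{thm:lowerstronglypreserving} is applied to the class of symmetric $\mathsf P_+^{\,I}$-metric spaces. I must also check that the reduction argument, namely the Kolmogorov-free diagonal argument, goes through with symmetric spaces only. Both points look routine, since symmetry is preserved by taking $d_i=\pi_i\circ a$ and by $d_\Pi$.
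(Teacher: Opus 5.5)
Your proposal is correct and follows the paper's proof. As in the paper, (1)$\Leftrightarrow$(2) goes through the lower analogue of Proposition~\ref{prop:upper_preserving_equal_upper_eqpm}, (2)$\Rightarrow$(3) comes from Theorem~\ref{thm:lowerstronglypreserving}, and (3)$\Rightarrow$(1) reruns the argument of Proposition~\ref{prop:eqmag_implies_stronglyeqmag} with the vectors $e^j$ and the choice $2\delta<F(e^j)$. Your explicit justification of $F(e^j)\le 2F(u)$ is a welcome addition: isotonicity in the quasi-pseudometric case, and triangle-triplet preservation via \cite[Theorem 3.21]{FBRL25a} in the pseudometric case. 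The paper leaves this step implicit when it carries that argument over from the separated setting.
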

	
	\begin{proof}
		The equivalence between (1) and (2) follows from the lower strongly analogue of Proposition \ref{prop:upper_preserving_equal_upper_eqpm}.
		
		Moreover, (2) implies (3) by Theorem \ref{thm:lowerstronglypreserving}.
		
		Finally, the implication (3) $\Rightarrow$ (1) follows by repeating the argument in the proof of Proposition \ref{prop:eqmag_implies_stronglyeqmag}, using condition (3) to obtain the required fibre condition.   
	\end{proof}

	As we already mentioned, Bors\'ik and Dobo\v{s} characterized strongly metric aggregation functions on products. In the upper setting, our framework does not directly yield their results in an arbitrary Cartesian product (see Example \ref{ex:upper_extension_failure}). More precisely, the difficulty arises from the fact that given $F:[0,+\infty)^I\to [0,+\infty)$, then its extension    $\overline{F}:\mathsf{P}_+^I\to \mathsf{P}_+$
	defined by
	\[\overline{F}(x)=
	\begin{cases}
		F(x)&\text{if }x_i\neq +\infty\text{ for all }i\in I,\\
		+\infty&\text{otherwise},
	\end{cases}
	\]
	fails to be Scott continuous at $0_I$ whenever $I$ is infinite. 
	
	Nevertheless, in the lower setting, we can prove the following:
	
	\begin{proposition}\label{prop:lower_F_Fbar}
		Let $F:[0,+\infty)^I\to[0,+\infty).$
		Then $F$ is a lower strongly (quasi-)(pseudo)metric aggregation function on products if and only if $\overline F$ is a lower strongly extended (quasi-)(pseudo)metric aggregation function on products.
	\end{proposition}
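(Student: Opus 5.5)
The plan is to prove the two implications directly, comparing balls. The key observation is that for a family of (nonextended) (quasi-)(pseudo)metric spaces $\{(X_i,d_i):i\in I\}$ we have $\overline F\circ d_\Pi=F\circ d_\Pi$, because all coordinates of $d_\Pi$ are finite.

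\emph{$\overline F\Rightarrow F$.} Assume $\overline F$ is lower strong. Every (quasi-)(pseudo)metric $d_i$ is in particular an extended one. Hence
\[\prod\mathscr T(d_i)\subseteq \mathscr T(\overline F\circ d_\Pi)=\mathscr T(F\circ d_\Pi).\]
That $F$ itself is an aggregation function on products follows from $\overline F$ being one, by restricting to finite-valued $d_i$. This direction is immediate.

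\emph{$F\Rightarrow\overline F$.} Assume $F$ is lower strong. I would proceed in three steps.

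First, $\overline F$ is an extended aggregation function on products, as noted before Proposition \ref{prop:upper_F_Fbar_finite} via \cite{FBRL25a}.

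Second, I would reduce to bounded metrics by truncation. Let $\{(X_i,d_i):i\in I\}$ be extended (quasi-)(pseudo)metric spaces and set $\rho_i=\min\{d_i,1\}$. Each $\rho_i$ is a (quasi-)(pseudo)metric and $\mathscr T(\rho_i)=\mathscr T(d_i)$, since balls of radius $<1$ coincide. So it suffices to show that each basic product neighbourhood $\bigcap_{j\in J}\pi_j^{-1}(B_{d_j}(x_j,\varepsilon_j))$, with $J$ finite and $0<\varepsilon_j<1$, contains a ball $B_{\overline F\circ d_\Pi}(x,\delta)$.

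Third, I would transfer the hypothesis to $\overline F$ through a fibre argument. By Proposition \ref{prop:lseqpm} (condition (3) together with the argument of Proposition \ref{prop:eqmag_implies_stronglyeqmag}), it is enough to check that $\overline F^{-1}(0)=\{0_I\}$. This is the step I expect to be the main obstacle. First, $\overline F(u)=0$ forces all coordinates of $u$ to be finite, since otherwise $\overline F(u)=+\infty$; so we need $F^{-1}(0)=\{0_I\}$ on $[0,+\infty)^I$. To get this from the lower strong property of $F$, I would mimic the proof of Theorem \ref{thm:lowerstronglypreserving}(1). Suppose $F(u)=0$ with $u\neq 0_I$, pick $j$ with $u_j>0$, and define for every $i$ the two-point metric space $X_i=\{0,1\}$ with $d_i(0,1)=d_i(1,0)=u_i$. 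This is a pseudometric, and in fact a metric when $u_i>0$. Take $x=(0,0,\dots)$ and $y$ equal to $1$ in the coordinates where $u_i>0$. Then $(F\circ d_\Pi)(x,y)=F(u)=0$, so $y$ lies in every $F\circ d_\Pi$-ball around $x$. However, the product neighbourhood $\pi_j^{-1}(B_{d_j}(0,u_j))$ of $x$ excludes $y$. This contradicts $\prod\mathscr T(d_i)\subseteq\mathscr T(F\circ d_\Pi)$.

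In the quasi-metric case this space is still admissible, being symmetric and separated. For the pseudometric and quasi-pseudometric variants it is admissible as well, so the same space works in all four settings. Once $\overline F^{-1}(0)=\{0_I\}$ is established, Proposition \ref{prop:lseqpm} in the pseudometric cases, or Proposition \ref{prop:eqmag_implies_stronglyeqmag} in the (quasi-)metric cases where $\overline F$ is automatically lower strong, yields that $\overline F$ is lower strong.
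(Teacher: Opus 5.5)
Your proof is correct, but for the implication from $F$ to $\overline F$ you take a different route from the paper.

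\textbf{The paper's route.} It compares balls directly. Given a basic product neighbourhood $U$ of $x$, it truncates $\rho_i=\min\{d_i,M_i\}$ with $M_j>\varepsilon_j$, so that $U$ is also a basic neighbourhood for the finite-valued family $\{\rho_i\}$. The lower strong property of $F$ then gives $B_{F\circ\rho_\Pi}(x,\delta)\subseteq U$. Finally it shows $B_{\overline F\circ d_\Pi}(x,\delta/2)\subseteq U$: for $y$ in this ball, $a=d_\Pi(x,y)$ is finite, $b=\rho_\Pi(x,y)\le a$, and triangle-triplet preservation gives $F(b)\le 2F(a)<\delta$.

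\textbf{Your route.} You go through characterizations already proved for extended functions:
\begin{itemize}
\item In the (quasi-)metric settings, $\overline F$ is lower strong unconditionally by Proposition \ref{prop:eqmag_implies_stronglyeqmag}.
\item In the (quasi-)pseudometric settings, Proposition \ref{prop:lseqpm} reduces everything to $\overline F^{-1}(0)=F^{-1}(0)=\{0_I\}$. You extract this from the lower strong property of $F$ with two-point test spaces, as in Theorem \ref{thm:lowerstronglypreserving}(1).
\end{itemize}
This is not circular, since both propositions are established before and independently of the present one.

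\textbf{What each buys.} The paper's argument is elementary and self-contained. It is uniform across the four settings, needs only triangle-triplet preservation, and transfers the topological inclusion without identifying what condition underlies it. Yours uses heavier machinery, but it shows what is really going on. In the (quasi-)pseudometric cases both properties reduce to the fibre condition $F^{-1}(0)=\{0_I\}$. In the (quasi-)metric cases both sides hold automatically. In effect, you prove part of the subsequent nonextended corollaries along the way.

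\textbf{Two small points.}
\begin{itemize}
\item Your truncation step with $\min\{d_i,1\}$ is never used afterwards and can be dropped.
\item Your claim that the two-point space is admissible in all four settings is inaccurate. In coordinates with $u_i=0$ it is not separated. This is harmless, because in the (quasi-)metric settings you never need the fibre argument. It could also be repaired by taking $X_i=\{0\}$ in those coordinates.
\end{itemize}
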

	
	\begin{proof}
		Assume first that $\overline F$ is a lower strongly extended (quasi-)(pseudo)metric aggregation function on products. Let $\big\{(X_i,d_i):i\in I\big\}$ be a family of \mbox{(quasi-)}(pseudo)metric spaces. Since each $d_i$ is also an extended (quasi-)(pseudo)metric and
		\[
		\overline F\circ d_\Pi (x,y)=F\circ d_\Pi (x,y)\]
		for all $x,y\in\prod_{i\in I}X_i$, the lower strongness of $\overline F$ immediately yields
		\[\prod_{i\in I}\mathscr T(d_i)\subseteq \mathscr T(F\circ d_\Pi).\]
		Thus $F$ is a lower strongly (quasi-)(pseudo)metric aggregation function on products.
		
		Conversely, assume that $F$ is a lower strongly (quasi-)(pseudo)metric aggregation function on products. A straightforward computation shows that $\overline F$ is an extended (quasi-)(pseudo)metric aggregation function on products.
		
		Let $\big\{(X_i,d_i):i\in I\big\}$ be a family of extended (quasi-)(pseudo)metric spaces.
		
		We show that
		\[\prod_{i\in I}\mathscr T(d_i)\subseteq \mathscr T(\overline F\circ d_\Pi).\]
		
		Let $x\in\prod_{i\in I}X_i$ and let
		\[U=\bigcap_{j\in J}\pi_j^{-1}\bigl(B_{d_j}(x_j,\varepsilon_j)\bigr)\]
		be a basic product neighbourhood of $x$, where $J\subseteq I$ is finite and $\varepsilon_j>0$ for each $j\in J$. If $J=\varnothing$, there is nothing to prove. 
		
		For each $i\in I$, choose $M_i>0$ in such a way that $M_j>\varepsilon_j$ for every $j\in J$, and define
		\[\rho_i=\min\{d_i,M_i\}.\]
		Then $\rho_i$ is a nonextended (quasi-)(pseudo)metric on $X_i$. Moreover, for every $j\in J$ we have
		\[B_{\rho_j}(x_j,\varepsilon_j)=B_{d_j}(x_j,\varepsilon_j),\]
		because $M_j>\varepsilon_j$. Hence
		\[U=\bigcap_{j\in J}\pi_j^{-1}\bigl(B_{\rho_j}(x_j,\varepsilon_j)\bigr).\]
		Since $F$ is a lower strongly (quasi-)(pseudo)metric aggregation function on products, there exists $\delta>0$ such that
		\[B_{F\circ\rho_\Pi}(x,\delta)\subseteq U.\]
		We claim that
		\[B_{\overline{F}\circ d_\Pi}(x,\delta/2)\subseteq U.\] 
		Let $y\in B_{\overline{F}\circ d_\Pi}(x,\delta/2)$ and put
		\[a=(d_i(x_i,y_i))_{i\in I}.\]
		Then $\overline F(a)=\overline{F}\circ d_\Pi(x,y)<\delta/2<+\infty.$
		Therefore $a\in[0,+\infty)^I$.
		Now define
		\[
		b=(\rho_i(x_i,y_i))_{i\in I}.
		\]
		Since  $(b,a,a)$ is a triangle triplet and $F$ preserves triangle triplets \cite{Dobos98,MayorVale10,FBRL25a}
		\[(F\circ\rho_\Pi)(x,y)=F(b)\leq F(a)+F(a)=2F(a)=2\overline F(a)<\delta.\]
		Thus
		\[y\in B_{F\circ\rho_\Pi}(x,\delta)\subseteq U.\]
		Hence
		\[
		B_{\overline F\circ d_\Pi}(x,\delta/2)\subseteq U.
		\]
		
		Consequently, $\overline F$ is a lower strongly extended (quasi-)(pseudo)metric aggregation function on products.
	\end{proof}

	Therefore, we can characterize lower strongly (quasi-)(pseudo)metric aggregation functions on products by means of our general framework after passing to the extension $\overline F$. In order to obtain an intrinsic characterization in terms of $F$, we use the following observation.
	
	\begin{proposition}\label{prop:F_Fbar_upper_semicontinuous}
		Let $F:[0,+\infty)^I\to[0,+\infty)$ be a map. 
		Endow $[0,+\infty)$ with its left-order topology and $[0,+\infty)^I$ with the corresponding product topology. Then $F^{-1}$
		is upper semicontinuous at $0$ if and only if $\overline F^{-1}$ is Scott upper semicontinuous at $0$.
	\end{proposition}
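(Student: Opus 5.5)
The plan is to reduce everything to two elementary observations and then transport neighbourhoods back and forth. The first observation is that the fibres of $F$ and $\overline F$ coincide over finite values. Indeed, $\overline F$ agrees with $F$ on $[0,+\infty)^I$, and it takes the value $+\infty$ exactly on the vectors having some infinite coordinate. Hence for every $t\in[0,+\infty)$,
\[\overline F^{-1}(t)=F^{-1}(t)\subseteq[0,+\infty)^I,\]
and in particular $\overline F^{-1}(0)=F^{-1}(0)$.

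The second observation is that the topology on $[0,+\infty)^I$ in the statement is the subspace topology inherited from the Scott topology of $\mathsf P_+^{\,I}$. By Example \ref{ex:left_order_topology} and the remark following it, the Scott topology on $\mathsf P_+^{\,I}$ is the product of the left-order topologies on $[0,+\infty]$. The left-order topology on $[0,+\infty)$ is the subspace topology of the left-order topology on $[0,+\infty]$, since $[0,a)\cap[0,+\infty)=[0,a)$ and $[0,+\infty]\cap[0,+\infty)=[0,+\infty)$. Since a product of subspaces carries the subspace topology of the product, every open $H\subseteq[0,+\infty)^I$ has the form $G\cap[0,+\infty)^I$ with $G$ Scott-open in $\mathsf P_+^{\,I}$, and conversely. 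On the codomain side, the basic neighbourhoods of $0$ are $[0,\delta)$ with $0<\delta<+\infty$ in both topologies. This holds for the Scott topology on $\mathsf P_+$ and for the left-order topology on $[0,+\infty)$, after discarding the trivial neighbourhood $[0,+\infty]$. So in either direction we may take $\delta$ finite, and then only finite values $t$ are involved.

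For ($\Rightarrow$), let $G$ be Scott-open in $\mathsf P_+^{\,I}$ with $\overline F^{-1}(0)\subseteq G$. Then $H=G\cap[0,+\infty)^I$ is open and contains $F^{-1}(0)=\overline F^{-1}(0)$. Upper semicontinuity of $F^{-1}$ gives a finite $\delta>0$ with $F^{-1}(t)\subseteq H$ for all $t<\delta$. By the first observation, $\overline F^{-1}(t)=F^{-1}(t)\subseteq H\subseteq G$ for all $t\in[0,\delta)$, which is Scott upper semicontinuity at $0$.

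For ($\Leftarrow$), let $H$ be open in $[0,+\infty)^I$ with $F^{-1}(0)\subseteq H$, and write $H=G\cap[0,+\infty)^I$ with $G$ Scott-open. Then $\overline F^{-1}(0)\subseteq G$, so there is a finite $\delta>0$ with $\overline F^{-1}(t)\subseteq G$ for $t<\delta$. For such $t$ we get $F^{-1}(t)=\overline F^{-1}(t)\subseteq G\cap[0,+\infty)^I=H$.

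The only step needing care is the identification of the product topology on $[0,+\infty)^I$ with the subspace topology from $\mathsf P_+^{\,I}$. This is needed in order to extend $H$ to a Scott-open $G$. It follows from the standard fact that products commute with subspaces, together with the explicit bases described above.
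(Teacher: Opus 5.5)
Your proof is correct and follows essentially the same route as the paper. You use $\overline F^{-1}(t)=F^{-1}(t)$ for finite $t$, intersect a Scott-open $G$ with $[0,+\infty)^I$ in one direction, and in the other extend an open $H$ to a Scott-open $G$ with $H=G\cap[0,+\infty)^I$; your explicit justification of this subspace-topology identification, and your remark that $\delta$ may be taken finite, fill in details the paper leaves implicit.
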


	\begin{proof}
		It is obvious that
		\[\overline F^{-1}(0)=F^{-1}(0).\]
		Assume first that $F^{-1}$ is upper semicontinuous at $0$. Let $G$ be an open subset of $[0,+\infty]^I$ such that
		\[\overline F^{-1}(0)\subseteq G.\]
		Then $G\cap[0,+\infty)^I$ is an open subset of $[0,+\infty)^I$ containing $F^{-1}(0)$. By the upper semicontinuity of $F^{-1}$ at $0$, there exists a basic open neighbourhood $[0,\delta)$ of $0$ in $[0,+\infty)$ such that
		\[F^{-1}(t)\subseteq G\cap[0,+\infty)^I\]
		for every $t\in [0,\delta)$. 
		Given $t\in[0,\delta)$ then $\overline F^{-1}(t)=F^{-1}(t)$ so
		\[\overline F^{-1}(t)\subseteq G.\]
		Thus $\overline F^{-1}$ is Scott upper semicontinuous at $0$.
		
		Conversely, assume that $\overline F^{-1}$ is Scott upper
		semicontinuous at $0$. Let $H$ be an open subset of
		$[0,+\infty)^I$, endowed with the product left-order topology, such that $F^{-1}(0)\subseteq H.$ Since this topology is the subspace topology induced by the Scott topology of $[0,+\infty]^I$, there exists a Scott-open set $G\subseteq[0,+\infty]^I$ such that
		\[H=G\cap[0,+\infty)^I.\]
		Moreover,
		\[\overline F^{-1}(0)=F^{-1}(0)\subseteq H\subseteq G.\]
		By assumption, there exists a basic Scott-open neighbourhood $[0,\delta)$ of $0$ in $[0,+\infty]$ such that
		\[\overline F^{-1}(t)\subseteq G\]
		for every $t\in [0,\delta)$. Then
		\[F^{-1}(t)=\overline F^{-1}(t)\cap[0,+\infty)^I\subseteq G\cap[0,+\infty)^I=H, \]
		for every $t\in[0,\delta).$ Hence $F^{-1}$ is upper semicontinuous at $0$.
	\end{proof}
	
	We conclude with the following consequences of the preceding results.
	
	The following results are direct consequences of Propositions \ref{prop:lower_F_Fbar} and \ref{prop:F_Fbar_upper_semicontinuous}.

	\begin{corollary}[\mbox{cf. \cite[Proposition 3.4]{FBRL26a}, \cite[Lemma 1, p. 53]{Dobos98}}]
		Every \mbox{(quasi-)}\-metric aggregation function on products is a lower strongly (quasi-)metric aggregation function on products, that is, for every family $\big\{(X_i,d_i):i\in I\big\}$ of (quasi-)metric spaces, it holds that 
		$$\prod \mathscr{T}(d_i)\subseteq \mathscr{T}(F\circ d_\Pi).$$
	\end{corollary}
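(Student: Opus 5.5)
The plan is to reduce the statement to the extended setting through the extension $\overline F$, and then invoke Proposition \ref{prop:eqmag_implies_stronglyeqmag}. Let $F:[0,+\infty)^I\to[0,+\infty)$ be a (quasi-)metric aggregation function on products. By Proposition \ref{prop:lower_F_Fbar}, $F$ is a lower strongly (quasi-)metric aggregation function on products if and only if $\overline F$ is a lower strongly extended (quasi-)metric aggregation function on products. So it suffices to show that $\overline F$ is an extended (quasi-)metric aggregation function on products, and then apply Proposition \ref{prop:eqmag_implies_stronglyeqmag} to $\overline F$.

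To check that $\overline F$ preserves extended (quasi-)metrics, I would use the observation recalled before Proposition \ref{prop:upper_Fbar_F}, taken from \cite{FBRL25a}. It states that the extension of a quasi-metric (resp. metric) aggregation function on products is separately preserving (resp. separately and symmetrically preserving). For the quasi-metric case, \cite[Theorem 3.18]{FBRL25a} then shows that $\overline F$ is an extended quasi-metric aggregation function on products. For the metric case, Theorem \ref{thm:sep_sym_preserving} gives the analogous conclusion, namely the preservation of triangle triplets and $\overline F^{-1}(0)=\{0_I\}$.

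Should a direct verification be needed instead, it is routine:
\begin{itemize}
\item $\overline F^{-1}(0)=F^{-1}(0)=\{0_I\}$.
\item Isotonicity and preservation of (asymmetric) triangle triplets hold on finite inputs because they hold for $F$.
\item Whenever some coordinate is $+\infty$, the inequality $\overline F(w)\le \overline F(u)+\overline F(v)$ is automatic: if $u_i+v_i\ge w_i=+\infty$, then $u_i$ or $v_i$ is infinite, so the right-hand side equals $+\infty$.
\end{itemize}

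Alternatively, avoiding Proposition \ref{prop:lower_F_Fbar}, I could argue through Theorem \ref{thm:lowerstronglyseparatelypreserving} combined with Proposition \ref{prop:F_Fbar_upper_semicontinuous}. The proof of Proposition \ref{prop:eqmag_implies_stronglyeqmag} shows that $\overline F^{-1}$ is Scott upper semicontinuous at $0$, using the elements $e^j$ and the bound $F(e^j)\le 2F(u)$. Proposition \ref{prop:F_Fbar_upper_semicontinuous} transfers this to upper semicontinuity of $F^{-1}$ at $0$, which yields the same conclusion.

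The only delicate point is confirming that $\overline F$ is genuinely an extended aggregation function, specifically the handling of $+\infty$ coordinates, which the triangle-inequality computation above settles. The metric case is identical, with triangle triplets in place of asymmetric ones.
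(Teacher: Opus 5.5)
Your proposal is correct and follows the paper's route. You pass to $\overline F$, which is an extended (quasi-)metric aggregation function on products; you obtain its lower strongness from Proposition \ref{prop:eqmag_implies_stronglyeqmag} (equivalently, from Scott upper semicontinuity of $\overline F^{-1}$ at $0$ together with Theorem \ref{thm:lowerstronglyseparatelypreserving}); and you descend to $F$ via the easy direction of Proposition \ref{prop:lower_F_Fbar}. The detour through Proposition \ref{prop:F_Fbar_upper_semicontinuous} in your alternative is harmless but unnecessary, since you must return to $\overline F$ anyway to invoke the general theorem.
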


	\begin{corollary}[\mbox{cf.\cite[Theorem 3.7]{FBRL26a}}]
		Let $F:[0,+\infty)^I\to [0,+\infty)$ be a (quasi-)pseudometric aggregation function on products. The following statements are equivalent:
		\begin{enumerate}[(1)]
			\item $F$ is a lower strongly  (quasi-)pseudometric aggregation function on products;
			\item $F$ is a lower strongly  (quasi-)pseudometric aggregation function on sets;
			\item $F^{-1}(0)=\{0_I\}.$
		\end{enumerate}
	\end{corollary}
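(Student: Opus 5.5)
The plan is to reduce the statement to its extended counterpart, Proposition~\ref{prop:lseqpm}, applied to $\overline F$, using Proposition~\ref{prop:lower_F_Fbar} as the bridge.

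First, $\overline F$ must be an admissible input for Proposition~\ref{prop:lseqpm}. Since $F$ is a (quasi-)pseudometric aggregation function on products, the observation recalled from \cite{FBRL25a} before Proposition~\ref{prop:upper_Fbar_F} shows that $\overline F$ is a preserving (resp. symmetrically preserving) function $\mathsf P_+^{\,I}\to\mathsf P_+$. By \cite[Lemma 3.7, Remark 3.9]{FBRL25a}, $\overline F$ is then an extended (quasi-)pseudometric aggregation function on products.

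Next, we transfer each condition.

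\textbf{(1)$\Leftrightarrow$(3).} By Proposition~\ref{prop:lower_F_Fbar}, $F$ is lower strongly on products if and only if $\overline F$ is. By Proposition~\ref{prop:lseqpm}, the latter holds if and only if $\overline F^{-1}(0)=\{0_I\}$. By definition of $\overline F$ we have $\overline F^{-1}(0)=F^{-1}(0)$, as noted in the proof of Proposition~\ref{prop:F_Fbar_upper_semicontinuous}. This gives (1)$\Leftrightarrow$(3).

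\textbf{(1)$\Rightarrow$(2).} On sets, I would argue directly as in Proposition~\ref{prop:upper_preserving_equal_upper_eqpm}. Given (quasi-)pseudometrics $d_i$ on $X$, the diagonal $x\mapsto(x)_{i\in I}$ embeds $(X,\bigvee\mathscr T(d_i))$ homeomorphically into $\bigl(X^I,\prod\mathscr T(d_i)\bigr)$. Moreover, $F\circ d_\Delta$ is the restriction of $F\circ d_\Pi$ to the diagonal. Since the restriction of a ball of $F\circ d_\Pi$ centred at a diagonal point is the corresponding ball of $F\circ d_\Delta$, the inclusion $\prod\mathscr T(d_i)\subseteq\mathscr T(F\circ d_\Pi)$ restricts to $\bigvee\mathscr T(d_i)\subseteq\mathscr T(F\circ d_\Delta)$.

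\textbf{(2)$\Rightarrow$(3).} Suppose $u\neq 0_I$ in $[0,+\infty)^I$ with $F(u)=0$, and pick $j$ with $u_j>0$. Take $X=\{x_1,x_2\}$ and $d_i(x_1,x_2)=d_i(x_2,x_1)=u_i$, zero on the diagonal. These are pseudometrics because $(u_i,u_i,0)$ satisfies the triangle inequalities. Then $\bigvee\mathscr T(d_i)$ separates $x_1$ from $x_2$, since $d_j$-balls of radius $u_j$ do. However, $F\circ d_\Delta\equiv 0$, so $\mathscr T(F\circ d_\Delta)$ is indiscrete, contradicting (2).

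The main obstacle is ensuring that the set-based equivalence, which the paper only established in the extended setting, is correctly handled here. The direct embedding and two-point arguments above avoid needing a set-version of Proposition~\ref{prop:lower_F_Fbar}. It remains to check that these constructions only use nonextended, finite-valued (quasi-)pseudometrics, which holds since $u\in[0,+\infty)^I$.
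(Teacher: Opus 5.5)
Your proof is correct. For (1)$\Leftrightarrow$(3) it is exactly the paper's argument: pass to $\overline F$ via Proposition~\ref{prop:lower_F_Fbar}, apply Proposition~\ref{prop:lseqpm}, and use $\overline F^{-1}(0)=F^{-1}(0)$.

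Where you differ is in handling (2). The paper simply asserts that, in the (quasi-)pseudometric setting, lower strong aggregation on products and on sets are equivalent, and justifies this by citing external results. You instead close the cycle (1)$\Rightarrow$(2)$\Rightarrow$(3)$\Rightarrow$(1) inside the paper's own framework:
\begin{itemize}
\item for (1)$\Rightarrow$(2) you use the diagonal embedding, the same device as in Proposition~\ref{prop:upper_preserving_equal_upper_eqpm};
\item for (2)$\Rightarrow$(3) you use a nonextended two-point space, the same construction the paper uses for the fibre condition in Theorem~\ref{thm:lowerstronglypreserving}.
\end{itemize}
This makes the argument self-contained. It also avoids the fact that Proposition~\ref{prop:lower_F_Fbar} has no set-based counterpart in the paper, and you never need to prove (2)$\Rightarrow$(1) directly. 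The paper's route is shorter, but its sets-versus-products step in the nonextended setting rests on a citation rather than on results proved in the text.

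You also use two easy facts without stating them, and both should be written down:
\begin{itemize}
\item $F(0_I)=0$, which you need to get $F\circ d_\Delta\equiv 0$ on the two-point space;
\item $F$ is an aggregation function on sets, which you need for (2) to make sense.
\end{itemize}
Both follow at once from $F\circ d_\Pi$ being a (quasi-)pseudometric, by evaluating it on the diagonal and restricting it to the diagonal.
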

	
	\begin{proof}
		By Proposition \ref{prop:lower_F_Fbar}, $F$ is a lower strongly
		(quasi-)pseudometric aggregation function on products if and only if $\overline F$ is a lower strongly extended (quasi-)pseudometric
		aggregation function on products. By Proposition \ref{prop:lseqpm},
		this is equivalent to
		\[\overline F^{-1}(0)=\{0_I\}.\]
		Since
		\[\overline F^{-1}(0)=F^{-1}(0),\]
		we obtain the equivalence between (1) and (3).
		
		Moreover, in the (quasi-)pseudometric setting, lower strong aggregation on products and on sets are equivalent (\cite[Theorem 3.16]{FBRL25a}, \cite{PraTri02}). Hence (1) and (2) are equivalent, and the result follows.
	\end{proof}

	\subsection{Strongly preserving functions}
	
	Our previous results yield the following characterization of strongly preserving functions.
	
	\begin{theorem}\label{thm:strongly_preserving}
		Let $F:(\mathscr{V},\preceq_\V,\ast)\to(\mathscr{W},\preceq_\W,\star)$ be a preserving function between continuous quantales. Then $F$ is strongly preserving if and only if the following statements hold:
		\begin{enumerate}[(1)]
			\item $F$ is Scott continuous at $1_\V;$
			\item $F^{-1}(1_\W)=\{1_\V\};$
			\item the inverse multifunction   
			\[F^{-1}:\mathscr W\rightrightarrows \mathscr V\]
			is Scott upper semicontinuous at $1_\W.$
			
		\end{enumerate}
		The same characterization holds in the separately preserving,
		symmetrically preserving, and simultaneously separately and
		symmetrically preserving settings.
	\end{theorem}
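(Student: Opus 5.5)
The plan is to obtain the theorem by simply combining the two one-sided characterizations already established. By definition, $F$ is strongly preserving exactly when, for every $\mathscr V$-metric space $(X,a)$, the pair $(X,F\circ a)$ is a $\mathscr W$-metric space and $\mathscr T(F\circ a)=\mathscr T(a)$. This equality splits into the two inclusions $\mathscr T(F\circ a)\subseteq\mathscr T(a)$ and $\mathscr T(a)\subseteq\mathscr T(F\circ a)$. Hence $F$ is strongly preserving if and only if it is simultaneously upper strongly preserving and lower strongly preserving. This first step only unwinds the definitions, and it uses that $F$ is preserving, so that $F\circ a$ is a $\mathscr W$-metric in both conditions.

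Next I will apply Theorem \ref{thm:us}. For a preserving $F$, upper strong preservation is equivalent to Scott continuity at $1_\V$, which is condition (1). Then I will apply Theorem \ref{thm:lowerstronglypreserving}. For a preserving $F$, lower strong preservation is equivalent to the conjunction of $F^{-1}(1_\W)=\{1_\V\}$ and Scott upper semicontinuity of $F^{-1}$ at $1_\W$, which are conditions (2) and (3). Conjoining the two equivalences gives the claim.

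For the separately, symmetrically, and separately-and-symmetrically preserving settings, I will argue in the same way, restricting to the relevant class of $\mathscr V$-metric spaces. Both Theorem \ref{thm:us} and Theorem \ref{thm:lowerstronglypreserving} are stated to remain valid in each of these settings. The only point needing care, and the closest thing to an obstacle, is checking this. In the proofs of the necessity directions, the test spaces used are the symmetrized residuation $(\mathscr V,a)$ and the two-point space with distance $u$. Both are separated and symmetric, so they belong to every one of the restricted classes. The sufficiency directions work for any space in the class.

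For the separately preserving setting one could additionally note, via Theorem \ref{thm:lowerstronglyseparatelypreserving}, that condition (2) is automatic. This is not needed for the equivalence as stated.
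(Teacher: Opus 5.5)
Your proposal is correct and is exactly the paper's argument: split strong preservation into its upper and lower halves, then combine Theorem~\ref{thm:us} with Theorem~\ref{thm:lowerstronglypreserving}, handling the restricted settings by restricting the class of spaces. Your explicit check is a welcome detail that the paper leaves implicit: the test spaces used in the necessity directions (the symmetrized residuation on $\mathscr V$ and the two-point space with distance $u$) are separated and symmetric. This check is what justifies the simultaneous separated-and-symmetric case, which Theorem~\ref{thm:us} does not list explicitly.
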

	
	\begin{proof}
		By definition, $F$ is strongly preserving if and only if it is both upper strongly preserving and lower strongly preserving. The result follows by combining Theorem \ref{thm:us} with Theorem \ref{thm:lowerstronglypreserving}. The corresponding statements in the separated, symmetric, and separated and symmetric settings follow in the same way from the respective upper and lower versions of these results.
	\end{proof}

	Combining the corresponding upper and lower results yields concrete characterizations of strongly extended (quasi-)(pseudo)metric aggregation functions. For instance, Corollary
	\ref{cor:upper_strongly_ext_qpm} and Proposition \ref{prop:lseqpm}
	give the following result.

	\begin{corollary}[\mbox{cf. \cite[Theorem 4.17]{FBRL26a}}]\label{cor:strongly_ext_qpm}
		Let $F:[0,+\infty]^I\to [0,+\infty]$ be an extended quasi-pseudometric aggregation function on products 
		(resp. an extended pseudometric aggregation function on products). The following statements are equivalent:
		\begin{enumerate}[(1)]
			\item $F$ is a strongly extended quasi-pseudometric aggregation function on products 
			(resp. a strongly extended pseudometric aggregation function on products);
			
			\item $F$ is a strongly extended quasi-pseudometric aggregation function on sets 
			(resp. a strongly extended pseudometric aggregation function on sets);
			
			\item $F^{-1}(0)=\{0_I\}$ and $F$ is continuous at $0_I$, when $[0,+\infty]$ is endowed with the left-order topology.
		\end{enumerate}
		
	\end{corollary}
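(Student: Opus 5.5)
The plan is to obtain Corollary \ref{cor:strongly_ext_qpm} by intersecting the upper and lower characterizations already available for functions $F:[0,+\infty]^I\to[0,+\infty]$. By definition, $F$ is a strongly extended quasi-pseudometric (resp. pseudometric) aggregation function on products precisely when it is both upper and lower strongly such a function on products, i.e. when $\mathscr T(F\circ d_\Pi)=\prod\mathscr T(d_i)$ for every family. The same holds on sets, with $\bigvee\mathscr T(d_i)$ in place of the product topology.

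For (1) $\Leftrightarrow$ (2), I split each side into its upper and lower parts. Corollary \ref{cor:upper_strongly_ext_qpm} gives that upper strong on products is equivalent to upper strong on sets. Proposition \ref{prop:lseqpm} gives that lower strong on products is equivalent to lower strong on sets. Conjoining these two equivalences yields (1) $\Leftrightarrow$ (2).

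For (1) $\Leftrightarrow$ (3), Corollary \ref{cor:upper_strongly_ext_qpm} makes the upper part equivalent to continuity of $F$ at $0_I$, with $[0,+\infty]$ carrying the left-order topology. Proposition \ref{prop:lseqpm} makes the lower part equivalent to $F^{-1}(0)=\{0_I\}$. Conjoining again gives exactly condition (3).

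In the pseudometric case I run the same argument with the "resp." versions of both cited results. Corollary \ref{cor:upper_strongly_ext_qpm} is stated with a resp. clause for extended pseudometric aggregation functions, and Proposition \ref{prop:lseqpm} covers the (quasi-)pseudometric cases simultaneously.

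The only point needing care is that the hypotheses match. An extended pseudometric aggregation function on products need not be isotone, so I must not use condition (4) of Corollary \ref{cor:upper_strongly_ext_qpm}; only items (1)--(3) of that corollary, which hold in both cases, are needed. Alternatively, one could argue directly through Theorem \ref{thm:strongly_preserving}. By \cite[Lemma 3.7, Remark 3.9]{FBRL25a}, $F$ is (symmetrically) preserving. The Scott-continuity condition then translates into left-order continuity at $0_I$ via Example \ref{ex:left_order_topology}. Finally, the upper semicontinuity of $F^{-1}$ at $0$ is automatic once $F^{-1}(0)=\{0_I\}$, by the triangle-triplet argument in the proof of Proposition \ref{prop:eqmag_implies_stronglyeqmag}.
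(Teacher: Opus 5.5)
Your proposal is correct and follows the paper's own argument: the paper obtains this corollary by conjoining the upper characterization (Corollary \ref{cor:upper_strongly_ext_qpm}) with the lower one (Proposition \ref{prop:lseqpm}), exactly as you do. You are right to avoid condition (4) in the pseudometric case, and your alternative sketch via Theorem \ref{thm:strongly_preserving} is also sound, although it would additionally need the strong (upper-plus-lower) analogue of the products/sets translation.
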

	
	Further characterizations for the remaining classes of aggregation
	functions considered above can be obtained analogously by combining the corresponding upper and lower results.
	
	\section{Conclusions}
	
	In this paper, we have analysed when aggregation preserves the topology associated with quantale-valued metric spaces. To address this problem, we introduced upper and lower strong preservation, corresponding to the two possible inclusions between the topology of the original space and the topology induced after aggregation. The characterizations obtained for these two properties involve different conditions, and together they determine exactly when aggregation preserves the induced topology.
	
	We then considered functions from $\mathsf P_+^{\,I}$ to $\mathsf P_+$. This particular case yields characterizations of topology-preserving aggregation for several classes of extended generalized metrics, including quasi-pseudometrics, pseudometrics, quasi-metrics, and metrics.
	
	The framework developed here is not restricted to generalized metric structures valued in $\mathsf P_+$. It can also be applied to other quantale-valued structures considered in aggregation theory. An important class of examples is provided by fuzzy metric-type structures \cite{FBRL25a}. The corresponding topological preservation problem for such structures will be addressed in future work.

	

\end{document}